\def\R{\mathbb{R}}
\def\Q{\mathbb{Q}}
\def\P{\mathbb{P}}
\def\C{\mathbb{C}}
\def\A{\mathbb{A}}
\def\Z{\mathbb{Z}}
\def\l{\langle}
\def\r{\rangle}
\def\Cs{\mathbb{C}^\times}
\def\x{\times}
\def\ox{\otimes}
\def\a{\alpha}
\def\la{\lambda}
\def\ga{\gamma}
\def\p{\partial}
\def\Tad{T_{ad}}
\def\oTad{\overline{T_{ad,0}}}
\def\Tsad{T^{\ltimes}_{ad}}
\def\oTsad{\overline{T^\ltimes_{ad,0}}}
\def\sLG{L^{sm}G}
\def\Taf{\tilde L^\ltimes T}
\def\tvt{ T^{\ltimes} \widetilde{\times} V_T}	
\def\tlt0{(\tilde LT/T)_0}
\def\cB{{\mathcal{B}}}
\def\cU{{\mathcal{U}}}
\def\mft{{\mathfrak{t}}}
\def\mfg{{\mathfrak{g}}}
\def\cG{\mathcal{G}}
\def\cO{\mathcal{O}}
\def\Ga{G^{aff}}
\def\Gap{G^{aff}_{poly}}
\def\Gas{G^{aff}_{poly}}
\def\LpG{L_{poly}G}
\def\Gaa{G^{aff}_{ad}}
\def\Gas{G^{aff}_{sm}}
\def\Ta{T^{aff}}
\def	\Ba{B^{aff}}
\def\Wa{W^{aff}}
\def\Gsd{L^{\ltimes}G}
\def\Gsdp{L^{\ltimes}_{poly}G}
\def\Tsd{T^{\ltimes}}
\def\Xa{X^{aff}}
\def\zG{G^\ltimes[z^\pm]}
\def\Xap{{X^{aff}_{poly}}}
\def\Xas{X^{aff,sm}}
\def\LTsd{L^\ltimes T}
\def\bO{\mathbf{O}}
\def\cXap{\mathcal{X}^{aff}_{poly}}
\DeclareMathOperator{\ec}{Spec }
\DeclareMathOperator{\im}{im}
\newcommand{\abcd}[4]{\left(\begin{array}{cc}
  #1 & #2 \\ 
  #3 & #4 \\ 
\end{array} \right)}
\newcommand{\oG}[1]{\overline{G_{#1}}}
\newcommand{\colv}[4]{\left[\begin{array}{c} #1 \\ #2\\ #3 \\ #4 \end{array} \right]}
\newcommand{\mc}[1]{\mathcal{#1}}
\newcommand{\ol}[1]{\overline{#1}}
\newcommand{\ch}[1]{\Lambda_{#1}}
\newcommand{\fq}[3]{\frac{#1}{2}#3 Q#3 + #2 Q #3}
\newtheorem*{rep@theorem}{\rep@title}
\newcommand{\newreptheorem}[2]{%
\newenvironment{rep#1}[1]{%
 \def\rep@title{#2 \ref{##1}}%
 \begin{rep@theorem}}%
 {\end{rep@theorem}}}
\newtheorem{thm}{Theorem}[section]
\newtheorem{prop}[thm]{Proposition}
\newtheorem{lemma}[thm]{Lemma}
\newtheorem{cor}[thm]{Corollary}
\theoremstyle{definition}
\newtheorem{definition}[thm]{Definition}
\theoremstyle{remark}
\newtheorem{rmk}{Remark}
 \title{A Wonderful Embedding of the Loop Group}
\author{ Pablo Solis}
\address{Department of Mathematics,
 University of California,
 Berkeley, CA}
\email{pablo@math.berkeley.edu}
\begin{document}

\begin{abstract}
I describe the wonderful compactification of loop groups. These compactifications are obtained by adding normal-crossing boundary divisors to the group $LG$ of loops in a reductive group G (or more accurately, to the semi-direct product $\Cs \ltimes LG$) in a manner equivariant for the left and right $\Cs \ltimes LG$-actions. The analogue for a torus group $T$ is the theory of toric varieties; for an adjoint group $G$, this is the wonderful compactications of De Concini and Procesi. The loop group analogue is suggested by work of Faltings in relation to the compactification of moduli of G-bundles over nodal curves. 
\end{abstract}

\keywords{Loop groups; affine Lie algebras; moduli of G bundles on curves; embeddings of reductive groups; representation theory; spherical varieties}

\maketitle
\tableofcontents

\section{Introduction}
Let $G$ be a simple simply connected algebraic group over $\C$. This paper studies smooth compactifications of $G$ and limits of families of principal $G$ bundles on nodal curves.  The connection between the two can be seen as follows.  If we fix a smooth curve $C$ over $\ec \C$ then the stack $Bun_G(C)$ of all $G$-bundles $C$ satisfies the {\it valuative criterion for completeness}, which means given a commutative square as below we can always fill in the dotted arrow:

\[
\xymatrix{ \ec K \ar[r]\ar[d] & Bun_G(C)\ar[d]\\
\ec R\ar@{-->}[ru]\ar[r] & \ec \C
}
\]
where $R$ is a complete DVR with fraction field $K$.
\bigskip

When $C$ is a nodal curve then $Bun_G(C)$ no longer has this property.  To see this let $E \to C$ be a principal $G$-bundle and $\pi \colon \tilde C \to C$ the normalization of $C$.  Then $E$ can be identified $\pi^* E$ together with an isomorphism $\phi \colon\pi^* E_y \to \pi^* E_z$ where $y,z$ are the pre images of the node $x\in C$.  We can consider $\phi\in G$ and in families $\phi$ can go to infinity.  Thus compactifications of $G$ are relevant to any completion of $Bun_G(C)$.

Over a fixed nodal curve Bhosle in \cite{Bhosle} has completed $Bun_G(C)$ simply by compactifying $G$.  However this does not address how to compactify $Bun_G$ over a family of curves and does not give a modular interpretation of what the boundary of the completion means.  Such a modular interpretation was given by Kausz for $G = GL(V)$.  
The key innovation we demonstrate here is that in order to provide a similar construction for $G$ simple one must not just compactifify $G$ but in fact `compactify' or complete the loop group $LG$.

Much work has been done in both the subject of compactifying reductive groups and the study of bundles on curves via loop groups.  Let us give a brief account of some of the relevant results in these areas.

\subsection{Compactifications of $G$}
In 1983 De Concini and Procesi studied the symmetric space $G/H$ where $G$ is a complex Lie group and $H$ is the fixed point set of an involution $\sigma$ of $G$; see \cite{DC&P}.  They constructed a ``wonderful'' compactification $\ol{G/H}$ of $G/H$; see definition \ref{def:wond}. 
After De Concini and Procesi's result the properties of their compactification were axiomatized and such varieties were called {\it wonderful}.  A paticular case is $G = G\x G/\Delta(G)$; when $G$ is of adjoint type this gives a wonderful compactification of $G$.  A construction of the wonderful compactification which we will exploit uses representation theory:
\begin{equation}\label{eq:Gwond}
\ol{G} = \ol{G \x G.[id]} \subset \P End(V(\la) )
\end{equation}
where $\la$ is a regular dominant weight.

In fact smooth compactifications $\ol{G}$ for all reductive groups $G$ exist \cite[6.2.4]{Brion} but in general lack certain combinatorial properties required to be wonderful.  Additionally, there is a so called canonical embedding of a semi simple group but this compactification is generally not smooth unless $Z(G) = 1$.  The only exceptions are when $G = Sp_{2n}(\C)$.  Several places in the literature mistakenly state that the only exception is $G = Sp_{2}(\C) = SL_2(\C)$.  I thank Johan Martens for preventing another mistake here.


The canonical embedding for semi simple $G$ has finite quotient singularities and the singularities can be resolved by working with Deligne-Mumford stacks. In \cite{Martens}, Martens and Thaddeus carry this out explicitly by constructing certain moduli problems about $G$-bundles on chains of $\P^1$s that {\it represent} the compactification. In this paper, we give a different proof using representation theory. Namely for a regular dominant weight $\la$ there is a quasiprojective variety $Y$ with an action of a torus $T$ such that the global quotient $\mathcal{X} = [Y/T]$ contains $G$ as a dense open subvariety.  Additionally, $\mc{X}$ contains a dense open substack $\mc{X}_0$ which is the closure of the open cell $U^-TU$ of $G$ and
\bigskip
 
\begin{reptheorem}{thm:stackGad}
\begin{itemize} Let $G$ be a semi simple group then there is a stacky compactification $\mathcal{X}$ of $G$ with an action of $G\x G$  with the following properties
\item[(a)] $\mathcal{X}$ is smooth and proper.
\item[(b)] $\mathcal{X} - \mathcal{X}_0$ is of pure codimension $1$ and we have an exact sequence
\[
0\to \Z^r \to Pic(\mathcal{X}) \to Z \to 0
\]
where $Z$ is a finite group and the subgroup $\Z^r$ is generated by the irreducible components of $\mathcal{X} - \mathcal{X}_0$.
\item[(c)] The boundary $\mathcal{X} - G$ consists of $r$ divisors $D_1,\dotsc,D_{r}$ with simple normal crossings and the closure of the $G \x G$-orbits are in bijective correspondence with subsets $I \subset [1,r]$ in such a way that to $I$ we associate $\cap_{i \in I} D_i$.
\item[(d)] Let $u_1,\dotsc,u_{r}$ be generators of the rays of the Weyl chamber and $M$ be the monoid they generate.  Any $G$ equivariant $\mathcal{X}' \to \mathcal{X}$ determines and is determined by a fan supported in the negative Weyl chamber whose lattice points lie in $M$.
\end{itemize}
\end{reptheorem}
Though this result is not new the fact that is can be proved using only representation theory will be important when we turn to the study of loop groups.
\subsection{Loop groups}
The algebraic loop group is $LG = G((z)) = G(\ec \C((z)))$ and the smooth loop group is $\sLG = C^\infty(S^1,G)$.  We state the main result for $LG$ but we have an analogous result for $\sLG$ (see remark \ref{rmk:loopGad}).   

In fact, the group of interest is a semi-direct product $\Cs \ltimes LG$; this means for $u\in \Cs$ we have $u \ga(z) u^{-1} = \ga(u z)$.  The group $\Ga$ appearing in \ref{thm:loopGad} is a central extension of  $\Cs \ltimes LG$.  The object $\Xa$ is an ind-stack constructed using a regular dominant weight $\la$ of $\Ga$.

\begin{reptheorem}{thm:loopGad}
If $G$ is a simple, connected, simply connected algebraic group over $\C$ with center $Z(G)$ then $\Cs \ltimes LG/Z(G)$ has a wonderful embedding $\Xa = \ol{\Cs \ltimes LG/Z(G)}$.  There is a dense open substack $\Xa_0$ which is the closure of the open cell in $\Cs \ltimes LG/Z(G)$ and
\begin{itemize}
\item[(a)] $\Xa$ is independent of $\lambda$. 
\item[(b)] $\Xa - \Xa_0$ is of pure codimension $1$.  It is a union of $r+1$ divisors that are Cartier and freely generate the Picard group.
\item[(c)] The boundary $\Xa - \Gsd/Z(G)$ consists of $r+1$ normal crossing divisors $D_0,\dotsc,D_r$ and the closure of the $\Ga \x \Ga$-orbits are in bijective correspondence with subsets $I \subset [0,r]$ in such a way that to $I$ we associate $\cap_{i \in I} D_i$.
\item[(d)] Any $G$ equivariant $X' \to \Xa$ determine and is determined by a Weyl equivariant morphism of toric varieties $\overline{T}' \to \ol{\Tsd_{ad,0}}$.
\end{itemize}
\end{reptheorem}
We use the word embedding because $\ol{\Cs\ltimes LG}$ does not satisfy the valuative criterion for completeness. However we do have a completeness result for the polynomial loop group $G(\C[z^\pm])$; see theorem \ref{thm:complete}.

The strategy for proving \ref{thm:loopGad} is to use the representation theory of $\Ga$. More precisely, take a highest weight representation $V(\la)$ of $\Ga$.  $V(\la)$ is an infinite dimensional vector space which is a direct sum of weight spaces $V_{\mu}$ for a maximal torus $T^{aff}$ of $\Ga$.

Now we consider
\[
\Xa = \ol{\Ga \x \Ga.[id]} \subset \P \bigg[ V( \la)\hat \otimes V(\la)^* \bigg]
\]
Where $ V(\la)\hat \otimes V(\la)^* := \prod_{\mu,\nu} \hom(V_{\mu},V_{\nu})$ and the product is over the weight spaces of the representation.

There is also a stacky extension analogous to theorem \ref{thm:stackGad}, see theorem \ref{thm:loopstackGad}.

\subsection{$LG$ and $G$-Bundles on Curves}
In this subsection we will focus on the smooth loop group because it will allow us to draw some intuitive pictures.

For any connected topological group $G$ and fixed Riemann Surface $C$ we have the moduli stack $Bun_{G,hol}(C)$ of holomorphic principal $G$-bundles on $C$.  Atiyah and Segal \cite{loop} prove there is an isomorphism 
\begin{thm}\label{dcchol}
\[Bun_{G,hol}(C) \cong L^{sm}_C G\backslash \sLG / L^{sm,+}G\]
\end{thm}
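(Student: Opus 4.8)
The plan is to present $Bun_{G,hol}(C)$ as the moduli of $G$-bundles assembled by clutching from two standard charts, and to match the clutching data with the double coset. Fix a point $p\in C$, a holomorphic coordinate disk $\mathbb{D}\ni p$, write $C^{*}=C\setminus\{p\}$, and let $\mathbb{A}=\mathbb{D}\setminus\{p\}$ be the resulting annulus with core circle $S^{1}\subset\mathbb{A}$. With these choices, $L^{sm,+}G$ is the subgroup of $\sLG$ of loops that are boundary values of holomorphic maps $\mathbb{D}\to G$, and $L^{sm}_{C}G$ the subgroup of those extending holomorphically over $C^{*}$; these act on $\sLG$ on the right and on the left respectively, so the double quotient stack $[\,L^{sm}_{C}G\backslash \sLG / L^{sm,+}G\,]$ is defined. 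First I would construct a morphism $\Phi$ from this stack to $Bun_{G,hol}(C)$: given a loop $\gamma$ over a base $S$ (replacing it by a real-analytic representative, which extends over a thinner annulus, and checking afterwards that the construction does not depend on this choice), glue the trivial holomorphic $G$-bundles on $S\times C^{*}$ and on $S\times\mathbb{D}$ along their overlap via $\gamma$ to obtain a holomorphic $G$-bundle $E_{\gamma}\to S\times C$. Right multiplication of $\gamma$ by $g_{+}\in L^{sm,+}G$, or left multiplication by $g_{-}\in L^{sm}_{C}G$, only changes the trivializations chosen over $\mathbb{D}$ and over $C^{*}$, so $E_{\gamma}\cong E_{g_{-}\gamma g_{+}}$ canonically and $\Phi$ factors through the quotient stack.

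Next I would show that $\Phi$ is fully faithful. Given loops $\gamma,\gamma'$ over $S$ and an isomorphism $\psi\colon E_{\gamma}\isomto E_{\gamma'}$, restrict $\psi$ to the two charts: over $S\times\mathbb{D}$ it is a holomorphic gauge transformation $g_{+}\in L^{sm,+}G$, over $S\times C^{*}$ a holomorphic gauge transformation $g_{-}\in L^{sm}_{C}G$, and comparing them over $S\times\mathbb{A}$, restricted to $S^{1}$, gives $\gamma'=g_{-}\gamma g_{+}^{-1}$; conversely any such relation produces an isomorphism $E_{\gamma}\cong E_{\gamma'}$. In particular the automorphism group of $E_{\gamma}$ is $\{(g_{-},g_{+})\colon g_{-}\gamma=\gamma g_{+}\}$, which is precisely the stabilizer of $\gamma$ under the two-sided action; hence $\Phi$ is bijective on Hom-sets and on automorphism groups after passing to the quotient stack, i.e.\ fully faithful.

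It remains to prove essential surjectivity: every holomorphic $G$-bundle $E$ on $C$ is isomorphic to some $E_{\gamma}$, equivalently $E$ is holomorphically trivial over $\mathbb{D}$ and over $C^{*}$. Over the contractible Stein disk $\mathbb{D}$ this is elementary. Over $C^{*}$ it is the crux: $C^{*}$ is an open Riemann surface, hence Stein by Behnke--Stein, and it is homotopy equivalent to a finite wedge of circles, so every topological $G$-bundle on it is trivial because $G$ is connected; by the Oka--Grauert principle $E|_{C^{*}}$ is therefore holomorphically trivial. Fixing trivializations over $\mathbb{D}$ and over $C^{*}$ and taking their ratio over $\mathbb{A}$, then restricting to $S^{1}$, produces $\gamma\in\sLG$ with $E\cong E_{\gamma}$; running the same argument with parameters --- the Oka principle holds in holomorphic families --- gives essential surjectivity of $\Phi$ over an arbitrary base. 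Together with full faithfulness, $\Phi$ is an equivalence of stacks.

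The main obstacle is the analytic input of the last step: the Oka--Grauert principle for principal bundles under a complex Lie group over the noncompact Stein surface $C^{*}$, in a form uniform enough to apply in holomorphic families, together with the bookkeeping needed to pass between honest holomorphic clutching data on the annulus $\mathbb{A}$ and smooth loops on $S^{1}$---one checks that $\gamma\mapsto E_{\gamma}$ is continuous and that the isomorphism class of $E_{\gamma}$ is unchanged when $\gamma$ is replaced by a real-analytic approximation, using density of real-analytic loops in $\sLG$. (When $G$ is only a topological group, one replaces it by its natural complexification throughout; no generality is lost in the cases relevant to Theorem~\ref{thm:loopGad}.) A minor additional point is independence of the choices of $p$ and $\mathbb{D}$: any two choices are related by an isotopy of $C$, which induces a canonical identification of the corresponding double-coset stacks. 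One can alternatively identify both sides with the quotient of the affine space of $\bar\partial$-operators on a fixed smooth $G$-bundle by the smooth gauge group, but it is the gluing construction that directly manufactures the loop-group presentation.
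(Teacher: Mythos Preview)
Your proposal is correct and follows essentially the same clutching argument the paper sketches in Remark~\ref{hol}: trivialize on the disk and on its complement (the latter via the Oka--Grauert principle on the noncompact Stein surface $C^{*}$), extract a transition loop on $S^{1}$, and identify the residual ambiguity with the two-sided action of $L^{sm}_{C}G\times L^{sm,+}G$. The paper itself does not give a proof beyond citing \cite[8.11.5]{loop}; what you have written is a careful elaboration of that outline, and in fact proves a bit more than the paper asserts in Section~\ref{sec:bundles}, where the statement is phrased only as a bijection of isomorphism classes rather than as an equivalence of stacks.
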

Here $L^{sm,+}G$ consists of boundary values of holomorphic function from a small disc $ \{z \in \C: |z| < 1\} \cong D_p \subset C$ and $L^{sm}_C G$ consists of boundary values of holomorphic function $C - \ol{D_p} \to G$.  


Theorem \ref{dcchol} has a useful reformulation. We can write $C = C^0 \cup A$ where $A$ is an annulus and $C^0$ is the disjoint union of a disc $D$ and a noncompact Riemann surface $C - \ol{D}$; see figure \ref{pic:dcc}. Then for $Y_A =  \sLG \x \sLG/Hol(A,G)$ we have
\begin{equation}\label{dcccyl}
Bun_{G,hol}(C) = \frac{Y_A}{Hol(C^0,G)}
\end{equation}
Theorem \ref{dcchol} corresponds to gluing trivial bundles on a small disc and on the complement of a small disc with a transition function in $L^{sm}G$.  Equation \eqref{dcccyl} corresponds to gluing the same trivial bundles by a pair of elements in $\sLG$ over a cylinder or annulus; in either case you produce a $G$-bundle on the same curve.

\begin{figure*}[htm]
\centering
\includegraphics[scale=0.35]{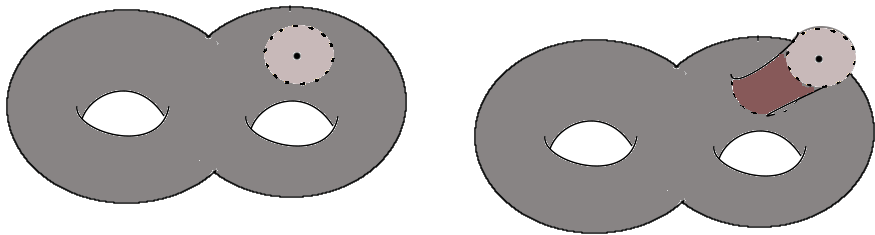}
\caption{The double coset construction.  The left picture is the analytic picture underlying theorem \ref{dcchol} and the right is the analytic picture underlying equation \eqref{dcccyl}.}
\label{pic:dcc}
\end{figure*}

With these preliminaries in place, let us describe an approach for studying $G$-bundles on a smooth curve degenerating to a nodal one. Holomorphically (or algebraically formally) any family of curves over a 1-dimensional base with smooth generic fiber and nodal special fiber has an \'{e}tale neighborhood that looks like the genus $0$ degeneration represented by the morphism
\[
\ec \frac{\C[x,y,u]}{xy -u} \to \ec \C[u].
\]
This is illustrated in figure \ref{pic:cyl}. For a small positive constant $\delta$, we can describe figure \ref{pic:cyl} as $A_u := \{ (x,y) \in \C^2 | |x|,|y|<\delta, xy = u\}$; a family of annuli parametrized by $u$.  
\begin{figure*}[htm]
\centering
\includegraphics[scale=0.35]{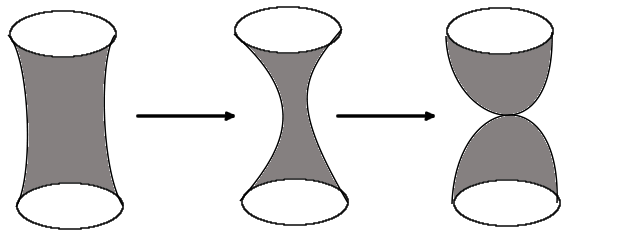}
\caption{The local picture of a nodal degeneration.}
\label{pic:cyl}
\end{figure*}
In other words, if $C_u$ is a family of smooth curves degenerating to a nodal curve $C_0$ then we can write $C_u = C_u^0 \cup A_u$.  In analogy with \eqref{dcccyl} we have
\begin{equation}\label{dccfam}
Bun_{G,hol}(C_u) = \frac{Y_{A_u}}{Hol(C^0_u,G)}
\end{equation}
where $Y_{A_u} = \sLG \x \sLG/Hol(A_u,G)$.  This construction allows us to reduce to studying the homogeneous space $Y_{A_u}$.  

Let us now ignore the norm restrictions on $x,y$ and allow and $u\in \C = \A^1$. We'll write this as $Y_A \xrightarrow{p} \A^1$ and for $u \in \A^1$ we have $p^{-1}(u) = Y_{A_u}$. 

The main idea can now be presented as follows.  One can show $p^{-1}(\Cs) \cong \Cs\ltimes LG$ and $p^{-1}(0) = Bun_{G,triv}(A_0)$ where the subscript ``triv'' means we are considering $G$-bundles together with a trivialization on the boundary of the disc.  The embedding $\Xa$ of $p^{-1}(\Cs)$ contains $p^{-1}(0)$ and completes this fiber and in turn allows one to complete $Bun_{G,hol}(C_u)$.  The cutting and gluing mentioned in this construction posses no problems in the holomorphic setting but the appropriate algebraic analogue requires more care and will be addressed in a follow up paper.

\subsection{Summary}
In section 2, we recall the construction of the wonderful compactification of an adjoint group and use the theory of toric stacks to generalize the construction to produce wonderful compactifications for all semisimple groups \ref{thm:stackGad}; this gives a representation theoretic construction to results in \cite{Martens}.  In section 3, we give basic definitions regarding loop groups and their representations.  Additionally, we generalize the results in section 2 to the loop group setting to produce an embedding $\Xa$.  In section 4, we discuss in detail the positive energy representations of the loop groups $LT$ for a torus $T$ and describe the embeddings of $LT$ in terms of combinatorics of positive definite forms. In section 5, we discuss in more detail the connection between $LG$ and bundles on curves or more generally, torsors on curves.

For the convenience of the reader I have included an index of notation which contains the notation defined in one section but used in multiple other sections.\\

\noindent{\bf Acknowledgements} I would like to thank Constantin Teleman for advising this project.  I also would like to thank Michel Brion, Mathieu Huruguen, Carlos Simpson, Michael Thaddeus for useful discussions.  I would like to especially thank Johan Martens for helpful discussions and also for pointing out some inaccuracies in an earlier version of this paper.
\section{Compactifications of Reductive Groups}\label{s:DC&P}
This section reviews the construction of the wonderful compactification of a semisimple group of adjoint type, we recall the basic results regarding its structure and in section \ref{sec:BrionKumar} describe an extension to reductive groups.  In section \ref{sec:stacky} we use the theory of toric stacks to provide wonderful stacky compactifications for a general semi simple group.  This method extends to to give compactifications of reductive groups but a further choice of a subdivision of a Weyl chamber must be made.  

\subsection{Wonderful Compactification of an Adjoint Group}\label{s:Gad} This section largely follows chapter 6 of \cite{Brion}.

\subsubsection{Definitions}\label{ss:Gdefs} Let $G$ be a semisimple group.  It has associated subgroups: a maximal torus $T$, opposite Borels $B, B^-$, their unipotent radicals $U,U^-$.  The character lattice we denote as $\ch{T}$, the co-character lattice we denote as $V_T$ and if $\mu \in \ch{T}, \eta \in V_T$ then the integer $\mu \circ \eta$ we denote as $\l \mu ,\eta \r$, $\l \mu ,\eta \r$, or $\mu(\eta)$.  Let $r = rk(G)$ and let $\a_1, \dotsc, \a_r$ be the positive simple roots.  Let $\omega_1,\dotsc, \omega_r$ be the fundamental weights.
\nomenclature{$T$, $B$,$B^-$,$U$,$U^-$}{Subgroups of a semisimple group}%
\nomenclature{$\a_1, \dotsc, \a_r$}{Roots of a semi simple group}%

\nomenclature{$\l \mu ,\eta \r$}{Paring between characters and co-characters}
For dominant weight $\la$ let $V(\la)$ denote the highest weight representation (HWR) of highest weight (HW) $\la$.  
\nomenclature{HWR}{Highest weight representation}%
\nomenclature{HW}{highest weight}%
By $V(\la)_\mu$ we denote the weight space of $V(\la)$ with weight $\mu$. When no confusion is likely to arise write simply $V_\mu$.  We can decompose 
\[
End(V(\la) ) = \bigoplus_{\mu, \chi \in \ch{T}} V_\mu \otimes V_\chi^*.
\]
Let $\P End(V(\la) )_0 \subset \P End(V(\la) )$ be the open subset consisting of points whose projection to $V_\la \otimes V_\la^*$ is not zero.  Define for a {\it regular} dominant weight $\la$
\begin{align}\label{Gad}
\begin{split}
X&:=\ol{G\x G.[id]} \in \P End(V(\la))  = \P \big[ V(\la)\otimes V(\la)^*\big]\\
X_0&:= X \cap \P End(V(\la) )_0
\end{split}
\end{align}
\nomenclature{$X$}{Wonderful compactification of $G_{ad}$}%
\nomenclature{$X_0$}{ Open cell of $X$}
Let $Z(G)$ denote the center of $G$.  It is routine to see that Stab($[id]$) is $Z(G) \x Z(G) \cdot \Delta(G)$ and consequently $X$ contains $G_{ad}:= G/Z(G) = \frac{G\x G}{Z(G)\x Z(G) \Delta(G)}$ as a dense open subset.  $X$ is the {\it wonderful compactification} of $G_{ad}$ and $X_0$ is the {\it open cell} of $X$.  A maximal torus for $G_{ad}$ is $\Tad = T/Z(G)$.  By $\ol{\Tad}$ we mean the closure of $\Tad$ in $X$ and $\oTad:= X_0 \cap \ol{\Tad}$. 
\nomenclature{$Z(G)$}{Center of a group $G$}%
\nomenclature{$G_{ad}$}{ Adjoint group of $G$}
\nomenclature{$\Tad$}{ Maximal torus for $G_{ad}$}
\nomenclature{$\ol{\Tad}$}{ Closure of $\Tad$ in $X$}
\nomenclature{$\oTad$}{ Affine toric variety, open cell of $\ol{\Tad}$}

Lastly, let $H$ be a reductive group and $Y$ a normal $H$-variety 
\begin{definition}\label{def:wond}
$Y$ is {\it wonderful of rank $r$} if $Y$ is smooth, proper and has $r$ normal corssing divisors $D_1,\dotsc, D_r$ such that the $H$-orbit closures are give by intersections $\cap_{i \in I} D_i$ for any subset $I \subset \{1,\dotsc, r\}$.
\end{definition}

The notation of $X,X_0, \ol{\Tad}, \oTad$ does not reflect the dependence on $\la$; this is justified by theorem \ref{thm:G_ad}.

\subsubsection{Properties of the Wonderful Compactification}
The main result of this subsection is theorem
\begin{thm}\label{thm:G_ad}
Let $X  = \overline{G}_{ad}$ be as in \eqref{Gad}.  Then 
\begin{itemize}
\item[(a)] $X$ is independent of $\lambda$.
\item[(b)] $X$ is smooth.
\item[(c)] $X - X_0$ is of pure codimension $1$; it consists of divisors that freely generate the Picard group.
\item[(d)] The boundary $X - G_{ad}$ consists of $r$ normal crossing divisors $D_1, \dotsc, D_r$ and the closure of the $G \x G$-orbits are in bijective correspondence with subsets $I \subset [1,r]$ in such a way that to $I$ we associate $\cap_{i \in I} D_i$.
\item[(e)] Any $G$ equivariant $X' \to X$ determines and is determined by a fan supported in the negative Weyl chamber.
\end{itemize}
\end{thm}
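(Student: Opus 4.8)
The plan is to prove a \emph{local structure theorem} for the open cell $X_0$ and then transport every assertion to $X$ by $G\x G$-translation. Since the highest-weight space $V_\la$ is one-dimensional, $\P End(V(\la))_0$ is the complement of a hyperplane in $\P End(V(\la))$, hence affine, so $X_0$ is affine. Normalizing the affine chart by the $V_\la\ox V_\la^*$-coordinate and writing $g=u^-tu$ (Gauss decomposition) for $g$ in the big Bruhat cell $U^-TU\subset G$, a direct inspection of the matrix entries of $\rho(u^-tu)$ shows that the action map $(u^-,u,t)\mapsto(u^-,u^{-1})\cdot t=u^-tu$ is an isomorphism from $U^-\x U\x\Tad$ onto the big cell of $G_{ad}$, and it extends to an isomorphism
\[
U^-\x U\x\oTad\ \isomto\ X_0 ,
\]
where $\oTad$ is the closure of $\Tad$ in the chart, namely $\ec\C[M]$ with $M$ the monoid generated by the $T$-characters $\mu-\la$ as $\mu$ runs over the weights of $V(\la)$. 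For every weight $\mu$ one has $\la-\mu\in\mathbb{N}\a_1\oplus\cdots\oplus\mathbb{N}\a_r$, so $M\subseteq\mathbb{N}(-\a_1)\oplus\cdots\oplus\mathbb{N}(-\a_r)$; and because $\la$ is \emph{regular} each $\la-\a_i$ is again a weight of $V(\la)$, whence $-\a_i\in M$ and in fact $M=\mathbb{N}(-\a_1)\oplus\cdots\oplus\mathbb{N}(-\a_r)$, so $\oTad\cong\A^r$ with coordinates indexed by the simple roots. (For non-regular $\la$ one loses the directions $\a_i$ with $\l\la,\a_i^\vee\r=0$.) I expect this local structure theorem --- in particular pinning down $\oTad\cong\A^r$ through the regularity of $\la$ --- to be the main obstacle; the rest follows formally.

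Granting it, part (b) is quick: $U^{\pm}$ is isomorphic as a variety to $\A^{(\dim G-r)/2}$, so $X_0\cong\A^{\dim G}$ is smooth, and the $G\x G$-translates of $X_0$ cover $X$. Indeed the wonderful compactification has a unique closed $G\x G$-orbit $Y$ (a product of two full flag varieties); $Y$ lies in the closure of every $G\x G$-orbit; and $Y$ meets the affine open $X_0$, namely $Y\cap X_0=U^-\x U\x\{0\}$, the stratum where all $\A^r$-coordinates vanish. Since $X_0$ is open, every orbit $\mathcal O$ satisfies $\emptyset\neq Y\cap X_0\subseteq\ol{\mathcal O}\cap X_0$, hence $\mathcal O\cap X_0\neq\emptyset$. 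So $X$ is covered by translates of the smooth affine space $X_0$. Part (a) is then a consequence: the monoid $M$, hence $\oTad$, hence $X_0$ with its $G\x G$-equivariant gluing, does not depend on the regular dominant $\la$, and as $X=(G\x G)\cdot X_0$ neither does $X$ (alternatively, compare two choices through $V(\la+\la')\subseteq V(\la)\ox V(\la')$ and the diagonal $G$).

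For part (c): inside $X_0\cong U^-\x U\x\A^r$ the complement of $G_{ad}$ is the union of the $r$ coordinate hyperplanes of the $\A^r$-factor, visibly meeting with simple normal crossings; let $D_1,\dots,D_r$ be their closures in $X$, so $X-X_0=D_1\cup\cdots\cup D_r$ is of pure codimension one. As $X$ is smooth, $Pic(X)=Cl(X)$, and restriction to $X_0$ is onto $Cl(\A^{\dim G})=0$ with kernel generated by $[D_1],\dots,[D_r]$; these classes are independent, since a relation $\sum a_iD_i\sim0$ would yield a nowhere-vanishing regular function on $X_0\cong\A^{\dim G}$, necessarily constant, forcing all $a_i=0$. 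Hence $Pic(X)$ is free on $[D_1],\dots,[D_r]$. For part (d): the $T\x T$-orbits on $\oTad\cong\A^r$ are the loci $O_I$, $I\subseteq[1,r]$, where exactly the coordinates indexed by $I$ vanish, and $\ol{O_I}=\bigcap_{i\in I}D_i$ in $X$. Every $G\x G$-orbit of $X$ meets $X_0$ and does so in a single $T\x T$-orbit of $\oTad$ (by the product decomposition of $X_0$), so the $G\x G$-orbits are exactly the $(G\x G)\cdot O_I$; taking closures gives the bijection $I\leftrightarrow\bigcap_{i\in I}D_i$ between subsets and $G\x G$-orbit closures, together with the normal-crossing statement on all of $X$.

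Finally part (e): by Luna--Vust theory the $G\x G$-equivariant embeddings of the spherical homogeneous space $G_{ad}$ are classified by colored fans in its valuation cone, which is the negative Weyl chamber, and $X=\overline{G}_{ad}$ is the embedding attached to the single maximal cone carrying all the colors. A $G\x G$-equivariant morphism $X'\to X$ of embeddings then corresponds to a subdivision of that cone, with no colors left to choose, i.e.\ to an ordinary fan supported in the negative Weyl chamber; equivalently, restricting to closures of the maximal torus, to a $W$-equivariant toric morphism refining the fan of $\ol{\Tad}$. This proves (e).
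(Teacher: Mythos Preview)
Your overall strategy---prove the local structure $X_0\cong U^-\times U\times\A^r$ and then propagate by $G\times G$-translation---is exactly the paper's, and your arguments for (a), (b), (d) track it closely. For (d) you skate over why distinct $T\times T$-orbits of $\oTad$ lie in distinct $G\times G$-orbits; the paper (in the loop-group version, Proposition~\ref{p:Gorb}) handles this by computing the stabilizers and noting that distinct standard parabolics are non-conjugate. For (e) you invoke Luna--Vust as a black box, whereas the paper gives a direct construction in Proposition~\ref{prop:fan2embedding}; both routes are legitimate.

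There is, however, a genuine slip in (c). You define $D_i$ as the closure in $X$ of the $i$th coordinate hyperplane of the $\A^r$-factor of $X_0$; by construction $D_i\cap X_0$ is that very hyperplane, hence nonempty. The next clause, ``so $X-X_0=D_1\cup\cdots\cup D_r$,'' is therefore false: the left side is disjoint from $X_0$, the right side is not. The $D_i$ you have written down are the \emph{boundary} divisors of part (d), i.e.\ the components of $X-G_{ad}$, not of $X-X_0$. The components of $X-X_0$ are a different set of $r$ divisors---the closures in $X$ of the codimension-one Bruhat varieties $\ol{B^-s_iB}\subset G_{ad}$ (compare the $E_i$ in the proof of Theorem~\ref{thm:loopGad}(b))---and it is \emph{those} that the excision sequence with $Pic(X_0)=0$ hands you as generators. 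Your $D_i$ do also freely generate $Pic(X)$, but to prove that one must run excision with the open set $G_{ad}$ and use $Pic(G_{ad})=0$ together with $\cO(G_{ad})^\times=\C^\times$, not with $X_0$. As written, the divisors you name are incompatible with the open set whose complement you take, so neither form of the argument is actually completed.
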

The proof will follow after a few lemmas which will also serve as our basic tools for studying stacky compactifications for reductive groups and loop groups. 

Let $t^{-\a}$ represent the regular function on $T$ given by the character $-\a$.
\begin{lemma}\label{l:T}
$\ol{T}_0 \cong \ec \C[t^{-\alpha_1},\dotsc, t^{-\alpha_r}] \cong \A^r$
\end{lemma}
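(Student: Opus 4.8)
The plan is to compute $\oTad$ directly in the distinguished affine chart of $\P End(V(\la))$ that realizes $\P End(V(\la))_0$, and then to recognize its coordinate ring as an explicit subalgebra of $\C[T]=\C[\ch T]$.

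First I would choose a basis $v_1,\dots,v_n$ of $V(\la)$ consisting of $T$-weight vectors with $v_1$ a highest weight vector, so that $v_1$ spans the one-dimensional space $V_\la$; write $\mu_i$ for the weight of $v_i$, so $\mu_1=\la$. In the associated basis $\{E_{ij}\}$ of $End(V(\la))$ one has $V_\la\otimes V_\la^*=\C\,E_{11}$, so $\P End(V(\la))_0$ is precisely the standard affine chart $U=\{\,[\sum a_{ij}E_{ij}] : a_{11}\neq 0\,\}$, with coordinates $y_{ij}=a_{ij}/a_{11}$ for $(i,j)\neq(1,1)$. Since $U$ is open in $\P End(V(\la))$ (hence $X_0=X\cap U$ is open in $X$) and the matrix $\rho(t)$ has $(1,1)$-entry $\la(t)\neq 0$, the subvariety $\oTad=X_0\cap\ol{\Tad}$ is exactly the Zariski closure of $\Tad$ computed inside $U$. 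Now $\Tad\subset\P End(V(\la))$ is the image of the map $t\mapsto[\rho(t)]$, with fibers the cosets of $Z(G)$ (indeed $\rho(t)$ is scalar iff $\a_k(t)=1$ for all $k$, which uses that $\la$ is regular, so the image is $T/Z(G)=\Tad$); and since $\rho(t)=\sum_i\mu_i(t)E_{ii}$ is diagonal, the point $[\rho(t)]\in U$ has $y_{ij}=0$ for $i\neq j$ and $y_{ii}=\mu_i(t)/\la(t)=t^{\mu_i-\la}$.

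Hence $\Tad\subset U$ is the image of the morphism $T\to U$ whose comorphism $\varphi\colon\C[\,y_{ij}:(i,j)\neq(1,1)\,]\to\C[\ch T]$ annihilates every off-diagonal coordinate and sends $y_{ii}\mapsto t^{\mu_i-\la}$. As $\C[\ch T]$ is a domain, $\operatorname{im}\varphi$ is reduced, so the Zariski closure of this image is $\ec$ of the subalgebra $A:=\C[\,t^{\mu_i-\la}:1\le i\le n\,]\subset\C[\ch T]$. Every weight $\mu_i$ of $V(\la)$ satisfies $\la-\mu_i=\sum_k n_k\a_k$ with all $n_k\in\Z_{\ge 0}$, so each $t^{\mu_i-\la}$ is a monomial in $t^{-\a_1},\dots,t^{-\a_r}$, giving $A\subseteq\C[t^{-\a_1},\dots,t^{-\a_r}]$. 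For the reverse inclusion I would use regularity of $\la$ once more: $\l\la,\a_k^\vee\r\ge 1$, so $\la-\a_k$ lies on the $\a_k$-root string through $\la$ and is a weight of $V(\la)$; thus $-\a_k=(\la-\a_k)-\la$ occurs among the exponents $\mu_i-\la$, so $t^{-\a_k}\in A$ for every $k$ and $A=\C[t^{-\a_1},\dots,t^{-\a_r}]$.

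Finally, the simple roots $\a_1,\dots,\a_r$ are $\Q$-linearly independent in $\ch T\otimes\Q$, so the characters $t^{-\a_1},\dots,t^{-\a_r}$ are algebraically independent over $\C$ (distinct monomials in them are distinct characters of $T$, hence $\C$-linearly independent functions); therefore $A$ is a polynomial ring in $r$ indeterminates and $\oTad=\ec A\cong\A^r$. The step that genuinely requires care, and the only place where regularity of $\la$ enters, is the reverse inclusion $\C[t^{-\a_1},\dots,t^{-\a_r}]\subseteq A$: for a non-regular dominant $\la$ one recovers only the subalgebra generated by those $t^{-\a_k}$ with $\l\la,\a_k^\vee\r>0$, which is exactly why the hypothesis cannot be dropped.
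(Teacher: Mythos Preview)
Your proof is correct and follows essentially the same approach as the paper, which defers to the proof of Lemma~\ref{l:preimageX0}: write the image of $T$ as the diagonal matrix with entries $\la(t),\,\la(t)\a_1(t)^{-1},\,\la(t)\a_2(t)^{-1},\dots$, pass to the chart where the $\la$-entry is nonzero, and observe that all remaining entries are monomials in $t^{-\a_1},\dots,t^{-\a_r}$ because every weight $\mu$ of $V(\la)$ has $\la-\mu\in\sum_k\Z_{\ge 0}\,\a_k$. Your write-up is in fact more careful than the paper's on two points it leaves implicit: that regularity of $\la$ is exactly what guarantees each $\la-\a_k$ is a weight (giving the reverse inclusion $t^{-\a_k}\in A$), and that linear independence of the simple roots is what makes $A$ a polynomial ring.
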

\begin{proof}
See the proof of \ref{l:preimageX0}.
\end{proof}

\begin{lemma}\label{l:X0}
The action morphism $U^- \x U \x \ol{T} \to X$ sending $(u_1,u_2,t) \mapsto u_1tu_2^{-1} \in X$ maps isomorphically onto $X_0$. 
\end{lemma}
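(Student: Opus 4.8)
The plan is to analyze the action map $\psi\colon U^-\times U\times \overline T\to X$ by working on the big cell $\P\mathrm{End}(V(\la))_0$ and producing an explicit inverse. First I would observe that $\psi$ does land in $X_0$: for $t\in T$ the point $u_1tu_2^{-1}$ lies in $G\cdot[\mathrm{id}]$, and its image in $V_\la\otimes V_\la^*$ is (up to scalar) $u_1 v_\la\otimes (u_2 v_\la)^*$ where $v_\la$ is a highest weight vector; since $U^-,U$ fix the line $\C v_\la$ modulo lower (resp. higher) weight spaces, this projection is nonzero, so $u_1tu_2^{-1}\in X_0$, and by continuity the same holds for $t\in\overline T$.

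Next, the key computation: identify $\P\mathrm{End}(V(\la))_0$ with the affine space $\mathrm{Hom}(\C v_\la, V(\la))\otimes \mathrm{Hom}(V(\la),\C v_\la)^*$-normalized picture, i.e. every point has a unique representative $\phi$ whose $(\la,\la)$-component is the identity on $V_\la$. In these coordinates I would show that $\psi(u_1,u_2,t)$ is represented by the matrix whose block structure is lower-triangular times diagonal times upper-triangular with respect to the weight filtration: the $\overline T$-factor contributes the diagonal part (the functions $t^{-\a_i}$ of Lemma \ref{l:T}, extended to $\overline T$), while $u_1$ and $u_2$ contribute unipotent lower/upper factors. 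Because this is literally an $LU$-type (Gauss) decomposition in the "matrix" $\mathrm{End}(V(\la))$, it is visibly injective: from a point in the big cell one recovers $u_1$ from the strictly-lower part, $u_2$ from the strictly-upper part, and $t\in\overline T$ from the diagonal, and these assignments are morphisms. This gives a morphism $X_0\cap \mathrm{im}(\psi)\to U^-\times U\times\overline T$ inverse to $\psi$.

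It then remains to check that $\psi$ is \emph{surjective} onto $X_0$, equivalently that $\mathrm{im}(\psi)$ is all of $X_0$. For this I would argue that $\mathrm{im}(\psi)$ is open (it is the image of a locally closed immersion, being injective with the inverse constructed above, plus a dimension/smoothness count — $\dim U^-\times U\times\overline T=\dim X$) and closed in $X_0$ (it is $U^-\times U$-stable and contains $\overline T$, and $X_0$ is the $U^-\times U$-sweep of $\overline T$ because any $G\times G$-orbit closure meets $\overline T$ — a standard fact about the wonderful compactification, or can be seen directly since every point of $X_0$ has nonzero $(\la,\la)$-projection and can be moved by $U^-\times U$ to kill the off-diagonal terms). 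Since $X_0$ is connected, $\mathrm{im}(\psi)=X_0$, and $\psi$ is an isomorphism onto $X_0$.

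The main obstacle is the surjectivity/closedness step: making rigorous that $X_0$ is exactly the $U^-\times U$-orbit sweep of $\overline T$ without circularly invoking the structure theory of $X$ that this lemma is meant to help establish. I expect the cleanest route is the direct Gaussian-elimination argument in $\P\mathrm{End}(V(\la))_0$: given $[\phi]\in X_0$, normalize the $(\la,\la)$-block to the identity, then use the unipotent actions to clear the off-diagonal blocks, landing in the diagonal torus part, which lies in $\overline T_0$ by Lemma \ref{l:T}; one must check this elimination is possible using only $U$ and $U^-$ (not all of $G$), which follows from the fact that the relevant root subgroups act on $V(\la)$ by unipotent upper/lower triangular transformations compatible with the weight ordering.
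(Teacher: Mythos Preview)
Your overall architecture (land in $X_0$; build an inverse; argue surjectivity) matches the paper's, but the crucial surjectivity step has a real gap, and the fix you propose does not work.

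The problem is the ``Gaussian elimination'' claim: for $[\phi]\in X_0$ you want to act by $U^-\times U$ to clear \emph{all} off-diagonal weight blocks and land in $\overline T_0$. But $U^-$ is a group of dimension $|\Phi^+|$, far smaller than the full lower-unipotent group in $\mathrm{End}(V(\la))$; its action only lets you control the \emph{first column} $\phi(v_\la)$, not arbitrary below-diagonal blocks. Even once you have cleared the $v_\la$-column and $v_\la^*$-row, there is no reason the remaining element is diagonal, and Lemma~\ref{l:T} says nothing about arbitrary diagonal elements of $X_0$ lying in $\overline T_0$. Your parenthetical justification (``root subgroups act by unipotent upper/lower triangular transformations'') only says the action is triangular, not that it is transitive on off-diagonal data. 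So the elimination step, which you correctly flag as the main obstacle, genuinely fails as written.

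The paper's argument (carried out in Proposition~\ref{p:loopX0}, to which the proof of Lemma~\ref{l:X0} refers) replaces elimination by two ingredients you are missing. First, the closed orbit lemma (Lemma~\ref{l:cl.or.}): for $\phi\in X_0$ the vector $\phi(v_\la)$ lies in $\overline{G.[v_\la]}$, and since that orbit is \emph{closed} in $\P V(\la)$ one gets $\phi(v_\la)\in G.[v_\la]$; combined with the Bruhat/Birkhoff-type statement $G.[v_\la]\cap\P_{v_\la}=U^-.[v_\la]$ (the finite-dimensional analogue of Lemma~\ref{l:loopX0}), this produces a genuine morphism $b\colon X_0\to U^-\times U$ defined on \emph{all} of $X_0$, not just on $\mathrm{im}(\psi)$. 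Second, rather than claiming $b^{-1}(1,1)$ is diagonal, the paper shows $b^{-1}(1,1)=\overline T_0$ by a dimension-plus-irreducibility argument: one has $\overline T_0\subset b^{-1}(1,1)$, both have dimension $r$, and $b^{-1}(1,1)\cong X_0/(U^-\times U)$ is irreducible. This is exactly the non-circular substitute for the ``standard fact'' you were worried about.
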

\begin{proof}
See \ref{p:loopX0}.
\end{proof}
\begin{lemma}\label{l:cl.or.}
Let $G$ be a reductive group and let $V$ a HWR of $G$.  Then the orbit of the HW vector is the only closed orbit in $\P V$
\end{lemma}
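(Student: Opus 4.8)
\emph{Proof sketch.} The plan is to combine the Borel fixed point theorem with a classification of the $B$-fixed points of $\P V$. A highest weight representation is irreducible, so $V = V(\la)$ for some dominant weight $\la$; the weight space $V_\la$ is one-dimensional, spanned by a highest weight vector $v_\la$, and $V(\la)$ is spanned by the $U^-$-translates of $v_\la$.

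\emph{Step 1: the $B$-fixed points of $\P V(\la)$.} I would show $[v_\la]$ is the unique one. A point $[v]$ is fixed by $B$ exactly when the line $\C v$ is stable under both $T$ and $U$. Stability under $T$ forces $v$ to be a weight vector, of some weight $\mu$. For such a $v$, stability of $\C v$ under $U$ means $u\cdot v \in \C v$ for all $u \in U$; but $u\cdot v - v$ lies in the span of the weight spaces of weights strictly greater than $\mu$, so these contributions must vanish, which by irreducibility of $V(\la)$ forces $\mu = \la$. Hence $[v_\la]$ is the unique $B$-fixed point of $\P V(\la)$.

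\emph{Step 2: $G\cdot[v_\la]$ is closed, and uniqueness.} The stabilizer of $[v_\la]$ contains $B$, hence is a parabolic subgroup $P$, so $G\cdot[v_\la] \cong G/P$ is complete; a complete subvariety of the projective variety $\P V(\la)$ is closed. For uniqueness, let $O$ be any $G$-orbit in $\P V(\la)$. Its closure $\ol O$ is a projective $G$-variety, so the orbit $O'$ of minimal dimension in $\ol O$ is closed, and by the Borel fixed point theorem $O'$ contains a $B$-fixed point; by Step 1 that point is $[v_\la]$, so $O' = G\cdot[v_\la]$. In particular, if $O$ is itself closed then $O = O' = G\cdot[v_\la]$.

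I do not anticipate a serious obstacle here. The only mildly delicate point is the claim in Step 1 that a $B$-fixed weight line must be the highest weight line; this genuinely uses irreducibility (equivalently, that the $\la$-weight space is one-dimensional and is the only $B$-stable line), so it is worth stating explicitly that ``HWR'' here means the irreducible module $V(\la)$ — for a reducible representation with several highest weights the statement fails.
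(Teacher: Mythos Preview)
Your argument is correct and follows the same route as the paper: the paper's proof is a one-line appeal to the Borel fixed point theorem, and your Steps~1--2 simply spell out the standard details (uniqueness of the $B$-fixed line in an irreducible module, and that any closed orbit must contain such a fixed point). Your caveat about irreducibility is appropriate and matches the paper's usage of ``HWR''.
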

\begin{proof}
This follows from the Borel fixed point theorem \cite[Pg.272]{Borel} for reductive groups.
\end{proof}
\begin{rmk}When we discuss loop groups we give a different proof (lemma \ref{l:cl.or.K-M}) that circumvents the need for a Borel fixed point theorem in the Kac-Moody setting.\end{rmk}

\begin{proof}[proof of theorem \ref{thm:G_ad}] To prove $(a),(b)$, we note that lemmas \ref{l:T},\ref{l:X0} show $\ol{T}$ and $X_0$ are independent of $\la$ and smooth.  Observe that
\begin{equation}\label{eq:X0covers}
G \x G.X_0 = X  
\end{equation}
This follows from lemma \ref{l:cl.or.}: $G\x G.v_\la \otimes v_\la^*$ is the unique closed orbit in $\P$ and it clearly intersects $\P_0$.  Now $\P - G\x G.\P_0$ is $G \x G$-stable and closed therefore must be empty and hence we have \eqref{eq:X0covers}.  Consequently we have (b).

For (a), consider compactifications $X_\la$ and $X_\mu$ associated to weights $\la,\mu$.  Let $X_\Delta$ be the closure of $\Delta(G)$ in $X_\la \x X_\mu$.  The projection $p_\la \colon X_\Delta \to X_\la$ is equivariant and lemmas \ref{l:T},\ref{l:X0} imply $X_{\Delta,0}:= p_\la^{-1}(X_{\la,0}) \cong U^-\cdot \oTad\cdot U$; that is, the restriction of $p_\la$ to $X_{\Delta,0}$ is an isomorphism and therefor induces a $G_{ad} \x G_{ad}$ equivariant isomorphism on 
\[
\bigcup_{g \in G\x G}\  g.X_{\Delta,0} = X_\Delta \to X_\la = \bigcup_{g \in G\x G}\ g.X_0.
\]  
This proves $(a)$.

Part $(c)$ uses the result that if $U \subset Y$ is a dense open affine of any scheme $Y$ then $Y - U$ is of pure codimesion 1.  This is proved in \cite[2.4]{Vakil}.  In the case at hand, $X_0$ is affine and $Pic(X_0) = 0$; this immediately shows $Pic(X)$ is generated by the irreducible components of $X - X_0$.  A relation among these generator is a principal divisor $(f)$ which is invertible on $X_0$; such a function is a constant $c$ and $f -c$ is zero on a dense open set hence there are no relations.  Part $(d)$ is proved by applying lemma \ref{l:X0} to reduce it to the case $\ol{T}_0$ for which it is obvious. Part $(e)$ is addressed in the next section. 
\end{proof}
\begin{rmk}\label{rmk:Gad}
There are alternative constuctions of the wonderful compactification.  Starting with the Lie algebra $\mathfrak{g} \oplus\mathfrak{g}$ consider the point $\Delta(\mfg) \in Gr_{r,2r}$ and consider the closure of $G \x G.\Delta(\mfg)$ in $Gr_{r,2r}$; this gives another construction of the wonderful compactification of $G$; see \cite[pg.19]{DC&P}.

Alternatively in \cite{Vinberg}, Vinberg constructs an affine scheme $S$ with the action of $G \x T$ and identifies an open set $S^0 \subset S$ such that the GIT quotient $S^0//T$ is also isomorphic to the wonderful compactification.  Thaddeus and Martens use this construction and generalize it to provide stacky compactifications of reductive groups; \cite[$\S$ 5,6]{Martens}.
\end{rmk}
\subsection{Extension to Reductive Groups}\label{sec:BrionKumar} 
Let $G$ be a connected reductive group.  In \cite[6.2]{Brion}, Brion and Kumar define a $G$-embedding $Y$ to be a normal $G \times G$ variety containing $G = (G \times G)/diag(G)$ as an open orbit.  They call $X$ {\it toroidal} if the quotient map $G \to G/Z(G) = G_{ad}$ extends to a map $Y \to \overline{G_{ad}} =: X$= the wonderful compactification of $G_{ad}$. In fact, toroidal has a more general definition in the theory of spherical varieties but we will not need this level of generality.

They prove the following proposition; see \cite[6.2.4]{Brion}.
\begin{prop}\label{prop:fan2embedding}
Any toroical $G$ embedding is determined by its associated toric variety. In particular, given reductive group $G$, a maximal torus $T$ and a fan of the form $\Sigma = W\cdot \Sigma_0$ where $W$ is the Weyl group and $\Sigma_0$ is a fan with support in the negative Weyl chamber, one can construct a $G$-embedding and an equivariant morphism to $X$.
\end{prop}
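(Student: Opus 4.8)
The plan is to reduce the statement to the structure theory already developed for the wonderful compactification $X = \overline{G_{ad}}$ and to the classical dictionary between fans and toric varieties. First I would recall that a toroidal $G$-embedding $Y$ comes equipped by definition with a proper birational $G\times G$-equivariant morphism $\pi\colon Y \to X$. Pulling back the open cell: set $Y_0 := \pi^{-1}(X_0)$. Since $X_0 \cong U^- \times U \times \overline{T}_0$ by Lemma~\ref{l:X0}, and $\pi$ is $G\times G$-equivariant, the ``extra'' data in $Y$ over $X$ is concentrated in the toric direction; more precisely, $Y_0$ fibers over $U^-\times U$ with fiber a toric variety $\overline{T}'$ mapping to $\overline{T}_0$ compatibly with the $T$-action. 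The orbit structure of $X$ described in Theorem~\ref{thm:G_ad}(d) — every $G\times G$-orbit closure meets the open cell — then forces $Y_0$ to meet every $G\times G$-orbit of $Y$, so that $Y = (G\times G)\cdot Y_0$ and $Y$ is recovered from the toric variety $\overline{T}'$ by the induced-space construction $Y = (G\times G)\times_{(U^-\times U)\rtimes(\text{something})} Y_0$, exactly as in the proof of parts (a),(b) of Theorem~\ref{thm:G_ad}. This establishes the first assertion: a toroidal embedding is determined by its associated toric variety.

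For the converse — building a $G$-embedding from a fan $\Sigma = W\cdot\Sigma_0$ — I would proceed in the reverse order. Given $\Sigma_0$ supported in the negative Weyl chamber with lattice points in the cocharacter lattice $V_T$, the full fan $\Sigma = W\cdot\Sigma_0$ is a genuine fan (the Weyl-invariance and the chamber condition guarantee the cones glue consistently), hence defines a toric variety $\overline{T}'$ with $T$-action and a torus-equivariant map $\overline{T}' \to \overline{T}_0 = \mathbb{A}^r$ coming from the refinement of the chamber cone. Then I would form the quotient-by-parabolic / induction construction: take $\overline{T}' $, build $X'_0 := U^-\times U\times \overline{T}'$ with its $B^-\times B$-action, and glue translates $g\cdot X'_0$ over $g\in G\times G$ using the transition functions inherited from $X$ (which are controlled because $X_0$ is affine with $\mathrm{Pic}(X_0)=0$, as in the proof of Theorem~\ref{thm:G_ad}(c)). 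The map $\overline{T}'\to\overline{T}_0$ then assembles into the desired equivariant $Y\to X$.

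I expect the main obstacle to be verifying that the local pieces $g\cdot X'_0$ glue to a \emph{separated} (and normal) scheme $Y$, i.e.\ checking the cocycle condition for the transition maps on overlaps $g\cdot X'_0 \cap g'\cdot X'_0$. In the wonderful case this is handled by the representation-theoretic model $X \subset \mathbb{P}\,\mathrm{End}(V(\lambda))$, where the gluing is automatic; for a general toroidal $Y$ there is no such ambient projective model, so one must instead argue that the fan $\Sigma$ being Weyl-invariant makes the patching data $B\times B$-equivariant, and then invoke Lemma~\ref{l:cl.or.} (the uniqueness of the closed orbit) together with the Bruhat-type covering of $G$ to see that the overlaps are themselves induced from torus overlaps — where separatedness is exactly the classical fan-compatibility condition for toric varieties. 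A secondary point to check is normality of $Y$: since $\overline{T}'$ is normal (being toric) and $U^-\times U$ is smooth, $X'_0$ is normal, and normality is preserved under the induction construction because the covering maps $g\cdot X'_0 \to Y$ are open immersions; but one should state this carefully. The rest — that the resulting $Y\to X$ is proper when $\Sigma$ is complete, and that it is independent of auxiliary choices — follows formally from the toric dictionary and from Theorem~\ref{thm:G_ad}.
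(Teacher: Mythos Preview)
Your first direction --- recovering a toroidal embedding from its toric piece via $p^{-1}(X_0)\cong U^-\times p^{-1}(\overline{T_{ad,0}})\times U$ --- is exactly what the paper does. The converse, however, takes a genuinely different route. You propose to build $Y$ by abstractly gluing translates $g\cdot X'_0$ over $g\in G\times G$, and you correctly flag the cocycle/separatedness check as the hard part; you then assert that ``for a general toroidal $Y$ there is no such ambient projective model.'' The paper's key move is precisely the opposite: it \emph{does} produce a projective model, one cone at a time. For each cone $\sigma\subset\Sigma_0$ with $\sigma^\vee$ generated by $\mu_1,\dots,\mu_m$, one picks a regular dominant $\lambda$ with all $\lambda+\mu_i$ regular dominant and sets
\[
\overline{G_\sigma}\ \subset\ \overline{G\times G.[id]}\ \subset\ \mathbb{P}\Big(\mathrm{End}\,V(\lambda)\oplus\textstyle\bigoplus_i\mathrm{End}\,V(\lambda+\mu_i)\Big),
\]
namely the locus where the rational projection to $\mathbb{P}\,\mathrm{End}\,V(\lambda)$ lands in $X$ (Lemma~\ref{l:preimageX0}). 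Each $\overline{G_\sigma}$ is thus born as a quasi-projective variety with a built-in equivariant map to $X$ and with $p_\sigma^{-1}(X_0)\cong U^-\times\overline{T_{\sigma,0}}\times U$; what remains is a \emph{finite} gluing over face inclusions $\tau\subset\sigma$, and that cocycle check reduces at once to the torus (Lemma~\ref{l:cocycle}).

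Your induction-style gluing can certainly be made to work, but it trades one delicate global gluing over $G\times G$ for the paper's finite gluing over cones, where separatedness and normality come for free from the ambient projective space. More to the point for this paper, the representation-theoretic construction is the one that ports to the loop-group setting in Section~3, where there is no Luna--Vust--style structure theory to fall back on; so the projective model is not merely a convenience here but the template for everything that follows.
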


\begin{proof}
Let $Y$ be a toroidal $G$-embedding.  We have a map $p \colon Y \to \ol{G_{ad}}$ and the associated toric variety is $p^{-1}(\ol{T_{ad}})$.  Equivariance implies that
\[
p^{-1}(X_0) \cong U^- \x p^{-1}(\ol{T_{ad,0}}) \x U.
\]
We now describe how to go from a Weyl equivariant toric variety to a $G$-embedding.  The above observation will show these are inverse constructions.

Let $\Sigma_0$ be as in the proposition and for a cone $\sigma \subset \Sigma_0$ let $\ol{T_{\sigma,0}}$ be the corresponding toric variety.  The idea is that given $\sigma$ we can construct a toroidal $G$-variety $\oG{\sigma}$ with a map $\oG{\sigma} \xrightarrow{p_\sigma} X$ such that $p_\sigma^{-1}(X_0) = U^-\ol{T_{\sigma,0}}U$; further if $ \tau \subset \sigma \subset \Sigma_0$ then we have an open immersion $\oG{\tau} \to \oG{\sigma}$.  Using these open immersions we can glue together the various $\oG{\sigma}$ to get the desired toroidal variety $\oG{\Sigma_0}$.

It remains to actually construct the spaces $\oG{\sigma}$ and show the gluing maps satisfy the co-cycle condition.  This we do in  \ref{l:preimageX0} and \ref{l:cocycle}
\end{proof}

\begin{lemma}\label{l:preimageX0}
Let $\Sigma_0$ be as in proposition \ref{prop:fan2embedding} and let $\sigma \subset \Sigma_0$ be a cone.  Then there is toroidal variety $\oG{\sigma}$ with a map $\oG{\sigma} \xrightarrow{p_\sigma} X$ such that
\[
p_\sigma^{-1}(X_0) \cong U^- \times \overline{T_{\sigma,0}} \times U
\]
and the closure $\ol{T_\sigma}$ of the torus in $T \subset \oG{\sigma}$ is given by the fan whose cones are $w\sigma$ for $w \in W$ and their faces.
\end{lemma}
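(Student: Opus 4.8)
The plan is to construct $\oG{\sigma}$ by gluing finitely many affine charts, each obtained from a $G\times G$-translate of the open cell $X_0$ of $X$ by replacing its toric factor $\ol{T}_0$ with the affine toric variety of the cone $\sigma$. For the local model, I would use Lemma \ref{l:T} to identify $\ol{T}_0=\ec\C[t^{-\a_1},\dots,t^{-\a_r}]\cong\A^r$ with the affine toric variety $U_{C^-}$ of the negative Weyl chamber $C^-$, and set $\overline{T_{\sigma,0}}:=\ec\C[\sigma^\vee\cap\ch{T}]$. Since $\sigma\subseteq\operatorname{supp}\Sigma_0\subseteq C^-$, the monoid $\langle -\a_1,\dots,-\a_r\rangle$ lies in $\sigma^\vee\cap\ch{T}$, which yields a birational toric morphism $\psi\colon\overline{T_{\sigma,0}}\to\ol{T}_0$. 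Writing $X_0\cong U^-\times\ol{T}_0\times U$ by Lemma \ref{l:X0}, I would form the cell
\[
M_\sigma:=U^-\times\overline{T_{\sigma,0}}\times U,\qquad q_\sigma:=\mathrm{id}\times\psi\times\mathrm{id}\colon M_\sigma\longrightarrow X_0,
\]
which is normal (smooth when $\sigma$ is) and contains the big cell $U^-TU$ of $G$ as a dense open via $T\subseteq\overline{T_{\sigma,0}}$.

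Next I would cover $X$ and glue. By \eqref{eq:X0covers} the $G\times G$-translates of $X_0$ cover $X$; since $X_0$ is $T\times T$-stable with a single $T\times T$-fixed point (from $X_0\cong U^-\times\ol T_0\times U$) whose translates exhaust the $T\times T$-fixed points of the complete variety $X$, the finitely many translates $(\dot w_1,\dot w_2)X_0$, $(w_1,w_2)\in W\times W$, $\dot w\in N(T)$, already cover $X$. Over $(\dot w_1,\dot w_2)X_0$ install the translate $(\dot w_1,\dot w_2)\cdot M_\sigma$ with the corresponding translate of $q_\sigma$. For $w_1=w_2=w$ this meets $\ol{\Tad}$ in its affine toric chart $U_{wC^-}$, where the modification is the toric morphism $U_{w\sigma}\to U_{wC^-}$ thickened by the two unipotent factors; for $w_1\neq w_2$ the chart is disjoint from $\ol{\Tad}$ (a nonempty $\Tad$-stable open in $\ol{\Tad}$ contains the dense torus, hence $\bar e$, but $\bar e\in(\dot w_1,\dot w_2)X_0$ would place the Weyl element $\overline{\dot w_1^{-1}\dot w_2}$ in the big cell $U^-\Tad U$, forcing $w_1=w_2$). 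To identify the installed modifications over overlaps, I would use that all of the $U_{w\sigma}\to U_{wC^-}$ arise by restriction from one $W$-equivariant birational toric morphism $\pi\colon\ol{T_\sigma}\to\ol{\Tad}$ --- where $\ol{T_\sigma}$ is the toric variety of the subfan of $W\cdot\Sigma_0$ with maximal cones $\{w\sigma\}_{w\in W}$, an honest morphism of varieties --- together with the fact that the $G\times G$-transition maps of $X$ between these charts lift canonically to the cells $M_\sigma$ because $\psi$ is $T$-equivariant. These data assemble into a gluing producing a separated $G\times G$-variety $\oG{\sigma}$ with an equivariant morphism $p_\sigma\colon\oG{\sigma}\to X$; since each chart contains a translate of $U^-TU$, the variety $\oG{\sigma}$ contains $G$ densely and $p_\sigma$ extends $G\to G_{ad}$, so $\oG{\sigma}$ is toroidal. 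The step I expect to be the real obstacle is checking that this gluing satisfies the cocycle condition on triple overlaps: because $X$ carries no global retraction onto $\ol{\Tad}$, the compatibility of the modifications over overlapping charts is genuine content, resting on the overlaps avoiding the exceptional loci of the $U_{w\sigma}\to U_{wC^-}$; this reduces to the corresponding statement for the honest variety $\ol{T_\sigma}$, which I would establish in \ref{l:cocycle}.

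Finally I would read off the two assertions. As $X_0$ is the $(e,e)$-chart, its preimage there is exactly $M_\sigma=U^-\times\overline{T_{\sigma,0}}\times U$, and by the gluing the other charts contribute nothing new over $X_0$; hence $p_\sigma^{-1}(X_0)\cong U^-\times\overline{T_{\sigma,0}}\times U$. For the torus, $p_\sigma$ carries the closure $\ol{T_\sigma}$ of $T$ in $\oG{\sigma}$ onto $\ol{\Tad}$, which is covered by the affine charts $U_{wC^-}$; over $U_{wC^-}$ the corresponding chart of $\oG{\sigma}$ meets $\ol{T_\sigma}$ exactly in the affine toric piece $U_{w\sigma}$, while the off-diagonal charts, being disjoint from $\ol{\Tad}$, meet $\ol{T_\sigma}$ not at all. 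Therefore $\ol{T_\sigma}=\bigcup_{w\in W}U_{w\sigma}$ is precisely the toric variety of the fan whose cones are the $w\sigma$, $w\in W$, together with their faces, as claimed.
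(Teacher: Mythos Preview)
Your approach is genuinely different from the paper's and, as written, has a gap at exactly the point you flag.

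The paper does not glue. It constructs $\oG{\sigma}$ globally: choose generators $\mu_1,\dots,\mu_m$ of $\sigma^\vee\cap\ch T$, pick a regular dominant $\la$ with each $\la+\mu_i$ regular dominant, and take $\oG{\sigma}$ to be the open subset of
\[
\ol{G\times G.[id]}\ \subset\ \P\Big(End(V(\la))\oplus\bigoplus_i End(V(\la+\mu_i))\Big)
\]
on which the rational projection to $\P End(V(\la))$ is defined. The map $p_\sigma$ is that projection, so $G\times G$-equivariance and the extension of $G\to G_{ad}$ are automatic, and there is no cocycle to check. The identification $p_\sigma^{-1}(\oTad)\cong \overline{T_{\sigma,0}}$ is then a direct calculation with the diagonal entries $\la(t),\la(t)\mu_i(t),\la(t)/\a_j(t),\dots$, and the description of $\ol{T_\sigma}$ follows from $\ol{\Tad}=\bigcup_{w\in W}w\,\oTad\,w^{-1}$.

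In your gluing approach the hard step is precisely the one you defer. The transition maps between the charts $(\dot w_1,\dot w_2)X_0$ are given by elements of $N(T)\times N(T)$, not $T\times T$, so ``$\psi$ is $T$-equivariant'' does not by itself produce canonical lifts to $M_\sigma$; you would need to show that on each overlap the action of the relevant Weyl element respects the product decomposition well enough to extend across the modified toric factor. You propose to push this to \ref{l:cocycle}, but in the paper that lemma glues the already-constructed $\oG{\sigma_i}$ for different cones of $\Sigma_0$ and in fact \emph{uses} the present lemma; invoking it here would be circular.

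Even if the gluing can be made to work, note that the paper's choice of method is load-bearing: the whole point of this section is to prove the lemma with representation theory alone, because that argument transports verbatim to the loop group setting in Section~3, where a chart-by-chart gluing over an infinite Weyl group would be much harder to control.
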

\begin{proof}
We have $ \overline{T_{\sigma,0}} = \ec \C[\ch{T} \cap \sigma^\vee]$; choose generators $\mu_1, \dotsc, \mu_m$ of $\ch{T} \cap \sigma^\vee$.  Choose a regular dominant weight $\la$ such that $\la +\mu_i$ are all regular dominant.  Consider 
\[
\ol{G \x G.[id]} \subset \P\bigg( End(V(\la)) \bigoplus_i   End(V(\la+\mu_i) )\bigg) =: \P[\la,\la+\mu_i]
\]
and the rational map  $p_\sigma\colon \P[\la,\la+\mu_i] \dashedrightarrow \P End(V(\la))$.  Set $\oG{\sigma}$ to be the subset of $\ol{G \x G.[id]} $ such that $p_\sigma(\oG{\sigma}) \subset X$; it is evident that this is a $G\x G$ equivariant map.

Then restriction of $p_\sigma \colon \oG{\sigma} \to X$ to $G$ is just the projection $G \to G_{ad}$.  As $U^\pm$ do not intersect the center of $G$ we have $p_\sigma(U^\pm) = U^\pm$.  From the equivariance of $p_\sigma$ and from $X_0 \cong U^-\x \oTad \x U$ we have 
\[
p_\sigma^{-1}(X_0) = U^-\x p_\sigma^{-1}(\oTad) \x U
\]
So we are reduced to showing $p_\sigma^{-1}(\oTad) = \ol{T_{\sigma,0}}$. Let $\mu_1, \dotsc, \mu_m$ be generators for $\sigma^\vee$.  The image of $T$ in $\P[\la, \la + \mu_i]$ can be depicted as:
\[
\left[\begin{array}{cccc}
  \la(t) &  &  &  \\ 
   & {\la(t) \over \alpha_1(t)} &  &  \\ 
   &  & {\la(t) \over \alpha_2(t)} &  \\ 
   &  &  & \ddots \\ 
\end{array}\right] \oplus
\left[\begin{array}{cccc}
  \la(t)\mu_1(t) &  &  &  \\ 
   & {\la(t)\mu_1(t) \over \alpha_1(t)} &  &  \\ 
   &  & {\la(t)\mu_1(t) \over \alpha_2(t)} &  \\ 
   &  &  & \ddots \\ 
\end{array}\right]\oplus \dotsb \oplus
\left[\begin{array}{cccc}
  \la(t)\mu_m(t) &  &  &  \\ 
   & {\la(t)\mu_m(t) \over \alpha_1(t)} &  &  \\ 
   &  & {\la(t)\mu_m(t) \over \alpha_2(t)} &  \\ 
   &  &  & \ddots \\ 
\end{array}\right]
\]
The variety $p_\sigma^{-1}(\oTad)$ consists of those points where $\la(t) \ne 0$.  This is:
\[
\left[\begin{array}{cccc}
  1 &  &  &  \\ 
   & \alpha_1^{-1}(t) &  &  \\ 
   &  & \alpha^{-1}_2(t) &  \\ 
   &  &  & \ddots \\ 
\end{array}\right] \oplus
\left[\begin{array}{cccc}
  \mu_1(t) &  &  &  \\ 
   & {\mu_1(t) \over \alpha_1(t)} &  &  \\ 
   &  & {\mu_1(t) \over \alpha_2(t)} &  \\ 
   &  &  & \ddots \\ 
\end{array}\right]\oplus \dotsb \oplus
\left[\begin{array}{cccc}
  \mu_m(t) &  &  &  \\ 
   & {\mu_m(t) \over \alpha_1(t)} &  &  \\ 
   &  & {\mu_m(t) \over \alpha_2(t)} &  \\ 
   &  &  & \ddots \\ 
\end{array}\right]
\]
Let $\sigma_W$ denote the negative Weyl chamber in $V_T$.  Recall we require $\sigma \subset \sigma_W$ so all $-\alpha_i \in \sigma_W^\vee \subset \sigma^\vee$. It follows that all the diagonal entires are polynomials in $\mu_1,\dotsc, \mu_{m}$ so the projection to $\ec k[\mu_1(t),\dotsc,\mu_m(t)] \cong \ol{T_{\sigma,0}}$ is an isomorphism.  The last statement of the proposition follows because $\ol{T_\sigma} = p_\sigma^{-1}(\ol{\Tad})$ and 
\[
\ol{\Tad} = \bigcup_{w \in W} w\  \oTad\ w^{-1}.  
\]
This in turn follows from the $G \x G$ equivariance of the compactification $X$ which gives $W \x W$ equivariance of $\ol{\Tad}$.
\end{proof}

\begin{lemma}\label{l:cocycle}
If $\Sigma_0$ is as in \ref{prop:fan2embedding} and $\sigma_1,\sigma_2,\sigma_3 \subset \Sigma_0$ then the $\oG{\sigma_i}$ can be glued along $\oG{\sigma_i\cap\sigma_j}$.
\end{lemma}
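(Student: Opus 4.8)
The plan is to verify the cocycle condition by reducing it, via the equivariant structure established in Lemma \ref{l:preimageX0}, to the corresponding statement for toric varieties, which is classical. First I would recall that each $\oG{\sigma_i}$ was constructed inside a projective space $\P[\la, \la+\mu^{(i)}_\bullet]$ using generators $\mu^{(i)}_1,\dots,\mu^{(i)}_{m_i}$ of $\ch T \cap \sigma_i^\vee$, together with a projection $p_{\sigma_i}$ to $\P\,End(V(\la))$. To glue, I would first arrange all three constructions to live in a common ambient space: take the union of all the generators $\{\mu^{(i)}_j\}$ over $i=1,2,3$ (and choose $\la$ regular enough that $\la + \mu^{(i)}_j$ is regular dominant for every generator that appears), and work inside the single projective space $\P$ whose $End$-summands are indexed by $\la$ together with all these weights. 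Since adding extra summands to the ambient space does not change the closure $\ol{G\x G.[id]}$ up to the evident equivariant isomorphism (the extra coordinates are determined rational functions of the others on the dense orbit), each $\oG{\sigma_i}$ is identified with a $G\x G$-stable locally closed subset of this common space, namely the locus where $p_{\sigma_i}$ lands in $X$.

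Next I would show that on this common model the inclusions $\oG{\sigma_i \cap \sigma_j} \hookrightarrow \oG{\sigma_i}$ are open immersions and that the triple overlaps match on the nose. By the equivariant decomposition $p_{\sigma}^{-1}(X_0) \cong U^-\x \ol{T_{\sigma,0}} \x U$ from Lemma \ref{l:preimageX0}, and the fact (from the proof of Theorem \ref{thm:G_ad}) that $G\x G$-translates of the open cell cover everything, the gluing of the $\oG{\sigma_i}$ is completely controlled by the gluing of the toric pieces $\ol{T_{\sigma_i,0}}$. More precisely, $\oG{\sigma_i}$ is covered by the $G\x G$-translates of $U^- \x \ol{T_{\sigma_i,0}} \x U$, and an intersection $\oG{\sigma_i}\cap\oG{\sigma_j}$ inside the common ambient space meets the torus part in $p^{-1}(\ol{\Tad})$ restricted to the locus where the relevant coordinates are nonzero, which is exactly $\ol{T_{\sigma_i \cap \sigma_j,0}}$ glued Weyl-equivariantly — because $\ol{T_\sigma} = \bigcup_{w\in W} w\,\ol{T_{\sigma,0}}\,w^{-1}$ and $\sigma_i\cap\sigma_j$ is a common face of $\sigma_i$ and $\sigma_j$, the toric variety of the fan $W\cdot(\sigma_i\cap\sigma_j)$ is the open subscheme of the toric variety of $W\cdot\sigma_i$ determined by that face. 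Hence $\oG{\sigma_i\cap\sigma_j}$ is an open subvariety of $\oG{\sigma_i}$, compatibly for both $i$ and $j$.

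Finally, the cocycle condition itself: the transition isomorphism between the $U^-\,\ol{T,0}\,U$-charts is, by construction and by $G\x G$-equivariance, induced by the identity on $U^-$ and $U$ and by the toric transition maps on $\ol{T_{\sigma_i,0}}$; these toric transition maps satisfy the cocycle condition because a fan of the form $\Sigma = W\cdot\Sigma_0$ is a genuine fan (the cones $w\sigma_i$ and their faces fit together consistently since $\Sigma_0 \subset \sigma_W$ and $W$ permutes the Weyl chambers), and the gluing of a toric variety from the affine charts attached to the cones of a fan is classical and automatically cocyclic. Transporting this along the equivariant identification $\bigcup_{g} g.(U^-\,\ol{T_{\sigma_i,0}}\,U) = \oG{\sigma_i}$ gives the cocycle condition for the $\oG{\sigma_i}$, so they glue to a variety $\oG{\Sigma_0}$ with a map to $X$.

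I expect the main obstacle to be the bookkeeping in step one: making precise the claim that enlarging the set of $End$-summands yields canonically isomorphic (not merely abstractly isomorphic) spaces $\oG{\sigma_i}$, so that all three sit inside one ambient projective space and the overlaps can literally be compared. Once everything is in a common model, the reduction to the torus and the appeal to the classical toric cocycle statement is formal; the only subtlety there is checking that the open-face relation among the $\sigma_i$ translates correctly through the $\sigma^\vee$ / $\ch T \cap \sigma^\vee$ duality, which is exactly what was already verified in Lemma \ref{l:preimageX0}.
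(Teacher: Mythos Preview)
Your proposal is correct and follows essentially the same approach as the paper: reduce the gluing and cocycle condition to the toric part via Lemma~\ref{l:preimageX0} and $G\times G$-equivariance, where it is classical. The only minor technical difference is that the paper, rather than embedding all three $\oG{\sigma_i}$ into one large common projective space, arranges for a face $\tau\subset\sigma$ that the generator sets be nested $F_\sigma\subset F_\tau$ and uses the resulting linear projection $\P[\la_\tau,\tau]\to\P[\la_\tau,\sigma]$ to produce the open immersion $\oG{\tau}\hookrightarrow\oG{\sigma}$ directly; this sidesteps the bookkeeping you flagged about enlarging the ambient space, but the content is the same.
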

\begin{proof}
Using lemma \ref{l:preimageX0} we reduce the gluing construction to the torus:  for any cone $\sigma \in \Sigma_0$ let $F_\sigma$ denote a choice for a set of generators for $\sigma^\vee$.  If $\tau \subset \sigma$ then we can choose $F_\sigma \subset F_\tau$.  Let $\la_\tau$ be such that $\la_\tau + F_\tau$ are all regular dominant weights.  There is a projection $\P[\la_\tau, \tau] \to \P[\la_\tau,\sigma]$ which maps $\oG{\tau} \to \oG{\sigma}$ and using lemma \ref{l:preimageX0}, equation \eqref{eq:X0covers} and the fact that $\ol{T_{\tau,0}} \to \ol{T_{\sigma,0}}$ is an open immersion we conclude the same for $\oG{\tau} \to \oG{\sigma}$.  

The cocycle condition follows immediately because it can be reduced to checking it for the tours for which it is manifestly obvious; namely, by $G\x G$ equivariance we can move any point $q \in \oG{\sigma_1\cap\sigma_2\cap\sigma_3}$ to lie in $\ol{T}_{\sigma_1\cap\sigma_2\cap\sigma_3,0}$.
\end{proof}

\subsection{Stacky Compactifications} \label{sec:stacky}
In general the fans $\Sigma$ that arise for reductive groups produce singular toric varieties and hence singular $G$-embeddings.  However these toric varieties are always smooth as stacks.  Here we describe a modification of the above construction that produces a smooth stack.  We briefly recall the basic theory of toric stacks and then incorporate them into the above construction.
\subsubsection{Preliminaries on Toric Stacks}
Following \cite{Anton}, we define {\it toric stacks} as $[Y(\Sigma)/Z]$ where $Y(\Sigma)$ is a normal toric variety with associated fan $\Sigma$ and $Z$ is a subgroup of the torus $T_\Sigma \subset Y(\Sigma)$.  The stack $[Y(\Sigma)/Z]$ contains the torus $T = T_\Sigma/Z$ as a dense open subscheme.  Just as in the theory of toric varieties, toric stacks are encoded by combinatorial data called stacky fans.  A {\it stacky fan} is a pair $(\Sigma, \beta \colon L \to N)$ where $L,N$ are lattices, $\Sigma$ is a fan in $L\otimes \R$, and $\beta$ is a homorphism of finite index.
\nomenclature{$(\Sigma, \beta \colon L \to N)$}{Stacky fan.}%
The equivalence between toric stacks and stacky fans is given as follows.  Given $[Y(\Sigma)/Z]$ we get a surjection $T_\Sigma \to T$ which induces a map $\beta \colon V_{T_\Sigma} \to V_T$.  Thus we get the stacky fan
\[
(\Sigma, \beta \colon V_{T_\Sigma} \to V_T).
\]
Starting from $(\Sigma, \beta \colon N \to L)$ we note that the hypothesis on $\beta$ implies that the dual morphism
\[
\beta^* \colon \hom(L, \Z) = L^* \to N^* = \hom(N,\Z)
\]
\nomenclature{$\beta^*$}{Dual of the map $\beta$ in a stacky fan}%
is injective. Consider the tori $T_\Sigma := \hom(N^*,\Cs) $ and $T := \hom(L^*,\Cs)$.  Dualizing $\beta^*$ we get a surjection
\[
0 \to Z(\beta) \to T_\Sigma \xrightarrow{ \ \ \hom(\beta^*,\Cs) \ \ } T \to 0
\]
\nomenclature{$Z(\beta)$}{Subgroup of a torus associated to a stacky fan.}%
and thus we get the toric stack $[Y(\Sigma)/Z(\beta)]$.
\nomenclature{$[Y(\Sigma)/Z(\beta)]$}{Toric stack associated to a stacky fan.}%

Here is how toric stacks appear in our situation.  Recall the notation $G,T, \a_1, \dotsc, \a_{r}$ from section \ref{s:DC&P}.  The Weyl chamber:
\[
C = \{ v \in V_{T}\otimes \R := V_{T,\R} | \alpha_i(v) \ge 0 \}
\]
is a fundamental domain for the action of the Weyl group on $V_T\otimes \R$ and defines a rationally smooth fan.  Let $u_1, \dotsc, u_r$ be generators of the rays of the fan $C$ and $M$ be the monoid they generate.  We get a homomorphism of lattices
\nomenclature{$u_1, \dotsc, u_r$}{Ray generators for the cone given by the Weyl chamber.}%
\begin{equation}\label{beta}
\beta \colon \Z^r \xrightarrow{e_i \mapsto u_i} V_T 
\end{equation}
and let $C'$ be the standard cone generated by the coordinate rays in $\Z^r\otimes \R$ giving rise to the toric variety $\A^r$.  We get a smooth toric stack $[\A^r/Z(\beta)]$ associated to the stacky fan $(C', \beta)$ whose coarse moduli space is the toric variety associated to the Weyl chamber in $V_T \otimes \R$.  For general toric stacks the subgroup $Z(\beta)$ is arbitrary but in the present situation $Z(\beta)$ is always finite.  In what follows we use the isomorphism $[\A^r/Z(\beta)] \cong [(\A^r \times T)/(\Cs)^r]$ where $(\Cs)^r$ acts diagonally on the product.

\begin{rmk}\label{ monoid}
Notice $\beta(C') = M$ and if $L$ is any lattice with a map $L\xrightarrow{\beta'} V_T$ and $M' \subset L$ is any monoid such that $\beta'(M')\subset M$ then we can lift $\beta'$:
\[
\xymatrix{
\Z^r\ar[r]^\beta & V_T\\
L \ar[ur]^{\beta'}\ar@{-->}^l[u]&
}
\]
and $l(M') \subset C'$.  This is used in the proof of theorem \ref{thm:stackGad}(d).
\end{rmk}

We use proposition \ref{prop:fan2embedding} together with the theory of toric stacks to give stacky compactifications of an arbitrary semi simple group $G$ analogous to the wonderful compactification of $G_{ad}$.

The stacky compactifications will be quotients of an embedding of the group $H = G \times (\Cs)^r$ where $r = rk(G)$.  The embedding is determined, via proposition \ref{prop:fan2embedding}, by a fan $C_\Delta \subset V_{T,\R} \oplus V_{(\Cs)^r,\R}$ in negative Weyl chamber of $H$.  Using the map $\beta$ in \eqref{beta} we get homormorphism $T \x (\Cs)^r \xrightarrow{(id,\beta)} T \x T$.  This map induces a map 
\[
V_{T,\R} \oplus V_{(\Cs)^r,\R} \xrightarrow{(id,\beta)_*} V_{T,\R} \oplus V_{T,\R}
\]
and the desired cone $C_\Delta$ is the inverse image of $-C \oplus C \subset V_{T,\R} \oplus V_{T,\R}$.  
\nomenclature{$C_\Delta$}{Cone depending on the fundamental weights of a semisimple group}%
By \ref{prop:fan2embedding} we get an $H$-embedding $Y(C_\Delta)$ 
\nomenclature{$Y(C_\Delta)$}{Quasi-projective $G \x (\Cs)^r$-embedding}; $Y(C_\Delta)$ is a strictly quasi-projective variety.

Set $T_\beta := (\Cs)^r$ and consider a subgroup of $H$ via $T_\beta \xrightarrow{\Delta} (\Cs)^r \x (\Cs)^r\xrightarrow{(id,\beta)} T\x (\Cs)^r \subset H$ the embedding $Y(C_\Delta)$ carries an action of $H \x H$ so we get an action of $T_\beta \x T_\beta$.  Identify $T_\beta$ with $T_\beta \x T_\beta/\Delta(T_\beta)$.

\begin{definition}\label{d:stack}
The stacky wonderful compactification of $G$ is 
\[
\mathcal{X} = \ol{G} := [Y(C_\Delta)/T_\beta].
\]
\end{definition}
The construction of the embedding $Y(C_\Delta)$ and thus $\mathcal{X}$ depend on a choice of a regular dominant weight $\la$ but just as in the case of the wonderful compactification different choices of $\la$ give isomorphic objects.
\begin{thm}\label{thm:stackGad}
\begin{itemize} Let $G$ be a semi simple group and $\mathcal{X}$ as in definition \ref{d:stack}.  Then:
\item[(a)] $\mathcal{X}$ is smooth and proper.
\item[(b)] $\mathcal{X} - \mathcal{X}_0$ is of pure codimension $1$ and we have an exact sequence
\[
0\to \Z^r \to Pic(\mathcal{X}) \to \hom(Z(\beta),\Cs) \to 0
\]
where the subgroup $\Z^r$ is generated by the irreducible components of $\mathcal{X} - \mathcal{X}_0$.
\item[(c)] The boundary $\mathcal{X} - G$ consists of $r$ divisors $D_1,\dotsc,D_{r}$ with simple normal crossings and the closure of the $G \x G$-orbits are in bijective correspondence with subsets $I \subset [1,r]$ in such a way that to $I$ we associate $\cap_{i \in I} D_i$.
\item[(d)] Let $u_1,\dotsc,u_{r}$ be generators of the rays of the Weyl chamber and $M$ be the monoid they generate.  Any $G$ equivariant $\mathcal{X}' \to \mathcal{X}$ determines and is determined by a fan supported in the negative Weyl chamber whose lattice points lie in $M$.
\end{itemize}
\end{thm}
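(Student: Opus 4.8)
The plan is to deduce everything from the quotient presentation $\mathcal{X} = [Y(C_\Delta)/T_\beta]$ together with the results of Section \ref{sec:BrionKumar}, reducing each assertion to a statement about the toric variety $Y(C_\Delta)$ or, even better, about $\ol{T_{\sigma,0}}$ and the stacky fan $(C',\beta)$. First I would observe that $Y(C_\Delta)$ is a smooth $H\x H$-variety: this is because $C_\Delta$ is the preimage under $(id,\beta)_*$ of $-C\oplus C$, and since $C$ is a rationally smooth fan and $\beta$ has finite index, $C_\Delta$ is a genuinely smooth fan in $V_{T,\R}\oplus V_{(\Cs)^r,\R}$; hence by Proposition \ref{prop:fan2embedding} and Lemma \ref{l:preimageX0} the embedding $Y(C_\Delta)$ is smooth, and its open cell is $U^-\x \ol{T_{C_\Delta,0}}\x U$ with $\ol{T_{C_\Delta,0}}\cong \A^r$. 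Properness follows since $Y(C_\Delta)\to X$ is proper (the fan $C_\Delta$ covers the same support as the Weyl fan upstairs) and $X=\ol{G_{ad}}$ is proper. Taking the quotient by the free action of the finite-by-torus group $T_\beta$ — free because $T_\beta$ acts through $(\Cs)^r$ which acts freely on the $\A^r$-factor away from coordinate hyperplanes, and one checks the stabilizers are exactly $Z(\beta)$ — gives a smooth proper Deligne–Mumford stack, which is (a).

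For (b) and (c) I would work on the open cell. By the same argument as in Lemma \ref{l:preimageX0}, $\mathcal{X}_0\cong U^-\x \oTad'\x U$ where $\oTad' = [\A^r/Z(\beta)]\cong[(\A^r\x T)/(\Cs)^r]$ is the smooth toric stack attached to the stacky fan $(C',\beta)$. The $r$ coordinate hyperplanes in $\A^r$ descend to $r$ smooth divisors $D_1,\dots,D_r$ in $\oTad'$, hence (by sweeping with $U^-\x U$ and then with $G\x G$) to $r$ divisors in $\mathcal{X}$; the normal-crossing property and the orbit–stratum bijection $I\mapsto \bigcap_{i\in I}D_i$ are inherited from the toric picture exactly as in part (d) of Theorem \ref{thm:G_ad}. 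For the Picard group, $\mathcal{X}_0$ retracts $T_\beta$-equivariantly onto the stacky point $[\mathrm{pt}/Z(\beta)]$ (or rather onto $\mathrm{pt}$), so $Pic(\mathcal{X}_0)\cong \hom(Z(\beta),\Cs)$; the divisors $D_i$ generate a free $\Z^r$ in $Pic(\mathcal{X})$ — freeness because a relation would be a rational function invertible on $\mathcal{X}_0$, and on $[(\A^r\x T)/(\Cs)^r]$ the invertible functions are constants — and the restriction sequence for the open inclusion $\mathcal{X}_0\hookrightarrow\mathcal{X}$ is precisely the claimed exact sequence. Here one must check $D_i$ are Cartier, which holds because $\mathcal{X}$ is smooth.

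Part (d) is the place that needs the most care, and is where I expect the main obstacle. The claim is that $G\x G$-equivariant maps $\mathcal{X}'\to\mathcal{X}$ correspond to fans in the negative Weyl chamber whose lattice points all lie in the monoid $M$ generated by $u_1,\dots,u_r$. By Proposition \ref{prop:fan2embedding} applied to $\mathcal{X}$ (or rather to its covering $G$-embedding $Y(C_\Delta)$), such a map is the same as a Weyl-equivariant morphism of toric stacks $\ol{T'}\to\oTad'$, i.e. a morphism of stacky fans to $(C',\beta)$. Now the content is exactly Remark \ref{ monoid}: a map of lattices $L\xrightarrow{\beta'}V_T$ carrying a monoid $M'$ into $M$ lifts uniquely through $\beta\colon\Z^r\to V_T$ with $l(M')\subset C'$, and conversely any sub-fan of $C'$ pushes forward to a fan in $C$ with lattice points in $M$. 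So I would phrase (d) as: the datum of $\mathcal{X}'\to\mathcal{X}$ is a sub-fan $\Sigma'\subset C'$, which via $\beta$ is the same as a fan $\Sigma\subset C$ (the negative chamber, after the sign flip built into $C_\Delta$) together with the constraint, automatic from the lift, that every lattice point of $\Sigma$ lies in $M=\beta(C')$; the subtlety is checking that the lift $l$ is compatible with the subdivision into cones, not merely with the supports, and that it is functorial so that composites of equivariant maps correspond to refinements of fans — this bookkeeping with stacky fans, rather than any geometric input, is the real work.
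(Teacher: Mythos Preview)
Your argument for properness in (a) has a genuine error. You claim ``$Y(C_\Delta)\to X$ is proper (the fan $C_\Delta$ covers the same support as the Weyl fan upstairs),'' but this is false: the paper states explicitly just before Definition~\ref{d:stack} that $Y(C_\Delta)$ is \emph{strictly} quasi-projective, and indeed on the open cell the toric piece is $T\times\A^r$, so the fibres of $Y(C_\Delta)\to X$ over $X_0$ contain copies of the torus $T$ and cannot be proper. The support comparison you invoke does not even type-check, since $C_\Delta$ lives in $V_{T,\R}\oplus V_{(\Cs)^r,\R}$ while the Weyl fan lives in $V_{T,\R}$. The paper's route is different and unavoidable: one first passes to the quotient $\mathcal{X}=[Y(C_\Delta)/T_\beta]$, identifies $\mathcal{X}_0\cong U^-\times[\A^r/Z(\beta)]\times U$ via the isomorphism $[(T\times\A^r)/T_\beta]\cong[\A^r/Z(\beta)]$, and then checks that the induced map $\mathcal{X}\to\ol{G_{ad}}$ is \emph{finite} (affine and finite on the open cell, because the root lattice has finite index in the weight lattice). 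Finiteness over the projective variety $\ol{G_{ad}}$ then gives properness of $\mathcal{X}$. Your smoothness claim for $C_\Delta$ is also under-justified: ``$C$ rationally smooth and $\beta$ of finite index'' does not by itself force the preimage cone to be smooth; the paper instead computes the dual cone $C_\Delta^\vee$ explicitly (generators $(0,a_i)$ and $(\pm\omega_i,\pm w_i)$) to see the toric variety is $T\times\A^r$.

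Your treatment of (b) is in the right spirit but differs from the paper's: rather than a retraction argument, the paper identifies $Pic(\mathcal{X})$ with the divisor class group $Cl(X_c)$ of the coarse moduli space (citing \cite{Matt}), then runs the excision sequence there and shows injectivity of $\Z^r\hookrightarrow Cl(X_c)$ by comparing with $p_c^*\colon Pic(X)\to Pic(X_c)$ after tensoring with $\Q$. For (d), you correctly isolate Remark~\ref{ monoid} as the key lifting step, but you stop at ``bookkeeping with stacky fans.'' The paper actually carries out the converse construction: given the fan data one forms $H'=G\times T_{\Sigma'}$, builds the analogous cone $C'_\Delta$, produces the toroidal variety $Y(C'_\Delta)$ via Proposition~\ref{prop:fan2embedding}, and then exhibits the map $[Y(C'_\Delta)/T_{\Sigma'}]\to\mathcal{X}$ by using the associated-bundle construction $Y(C'_\Delta)\times_{T_{\Sigma'}}T_\beta\to Y(C_\Delta)$. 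That explicit construction, not just the stacky-fan diagram from \cite{Anton}, is what closes the argument.
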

\begin{proof}
Let $a_i = \beta^*(\a_i), w_i = \beta^*(\omega_i) \in \ch{V_\beta}$.  One can verify that $C_\Delta^\vee$ is generated by $(0,a_i),(\pm \omega_i, \pm w_i)$. It follows that the toric variety associated to $C_\Delta$ is isomorphic $T \x \A^r$ and using lemma \ref{l:preimageX0} we have
\begin{align*}
\mathcal{X}_0:=&[Y(C_\Delta)_0/T_\beta] = [(U^- \times T \times \A^r \times U)/T_\beta]\\
 \cong& U^- \times [(T \times \A^r)/T_\beta] \times U\\
\cong & U^- \x [\A^r/Z(\beta)] \x U.
\end{align*}
The open cell is smooth and its translates covers $\mathcal{X}$.  Further the restriction of $\mc{X}\to \ol{G_{ad}}$ is given by $U^- \x [\A^r/Z(\beta)] \x U \xrightarrow{(id,f,id)}U^- \x \oTad \x U$. The map $f$ is the composition $[\A^r/Z(\beta)]\to \ol{T_c}\xrightarrow{f'} \oTad$ where $\oTad$ is the course moduli space of $[\A^r/Z(\beta)]$.  We will show $f$ is affine and finite hence proper; this reduces to showing $f'$ is affine and finite; the first property is clear; finiteness follows because the roots of $G$ are finite index in the weights of $G$. We conclude that $\mathcal{X}$ is finite over the projective scheme $\ol{G}_{ad}$ hence (a).   

Let $X_c$ denote the coarse moduli space for $\mathcal{X}$; it comes with a surjection $X_c \xrightarrow{p_c} X$.  To prove (b) we use that $Pic(\mathcal{X}) = Cl(X_c)$; this is shown in remark 3.4 of \cite{Matt}.  Working in $X_c$ we see that as $X_{c,0} = U^- \x \ol{T_c} \x U$ is affine the complement is pure codimension $1$ and we have an exact sequence
\begin{equation}\label{ll}
\Z^r \xrightarrow{i} Cl(X_c) \to Cl(X_{c,0}) \cong Cl(\ol{T_c}).
\end{equation}
Notice that $p_c^* \colon \Z^r = Pic(X) \to Pic(X_c)$ is injective; this follows for the same reason given in \ref{thm:G_ad}(c).  Further after tensoring \eqref{ll} with $\Q$ we get a surjection $\Q^r \xrightarrow{i_\Q} Pic(X_c)_\Q$ and by the previous sentence $i_\Q$ must be an isomorphism and so $i$ is injective.

Now $Cl(\ol{T_c}) = Pic([\A^r/Z(\beta)])$ and the latter is equal to the group of $Z(\beta)$-equivariant line bundles on $\A^r$.  Any line bundle on $\A^r$ is trivial and an equivariant structure is uniquely determined by an element of $\hom(Z(\beta),\Cs)$.  Altogether this gives (b).

For (c) it follows similarly as for $\ol{G}_{ad}$ by reducing to the torus $\ol{T}_0$.  We can present $\ol{T}_0$ as $[\A^r/Z]$ where $Z$ is a finite group so the result follows by observing that it holds for the atlas $\A^r$.

To prove (d) we first note that any equivariant map determines a map $\mathcal{X}'_0 \to \mathcal{X}_0$ and consequently determines a map of torics stacks $[Y(\Sigma')/Z(\beta')] \to [\A^r/Z] = [Y(C'),\beta]$.  By \cite[3.4]{Anton} this corresponds to a diagram
\[
\xymatrix@R=3pt{
\Sigma' \ar[r] & C'\\ 
V_{T_{\Sigma'}}\ar[r]^l\ar[ddd]^{\beta'} & V_{T_\beta} = \Z^r\ar[ddd]^{\beta}\\
&\\
&\\
V_{T'} \ar[r]^f & V_T
}
\]
This gives one direction.  Conversely if we are given a fan whose lattice points lie in $M$ then by remark \ref{ monoid} we can lift this data to a diagram as above. In particular, we also have a map $T_{\Sigma'} \to T_\beta$ which we also denote by $l$

Now apply proposition \ref{prop:fan2embedding} for the group $H' = G \x T_{\Sigma'}$ and the cone $C'_\Delta$ which is the inverse image of $-C\oplus C$ under the map $V_T\oplus V_{T_{\Sigma'}} \xrightarrow{(id,\beta')}V_T\oplus V_T$.  This produces an embedding $Y(C'_\Delta)$ and we claim $[Y(C'_\Delta)/T_{\Sigma'}]$ is the desired $G$-equivariant embedding mapping to $\mc{X}$.

To see this note that $Y(C'_\Delta)$ has an $H' \x H'$ equivariant map to $Y(C_\Delta)$ ($H'$ acts on the latter via $H'\xrightarrow{id, l}H$). Further $Y(C'_\Delta)$ is a $T_{\Sigma'}$-bundle over $[Y(C'_\Delta)/T_{\Sigma'}]$ and therefore $Y(C'_\Delta)\x_{T_{\Sigma'}} T_\beta$ is a $T_\beta$-bundle and we have an equivariant map
\[
Y(C'_\Delta)\x_{T_{\Sigma'}} T_\beta \to Y(C_\Delta)\x_{T_\beta} T_\beta \cong Y(C_\Delta).
\]
By the definition of $\mc{X}$ we get a map $[Y(C'_\Delta)/T_{\Sigma'}] \to \mc{X}$.  
\end{proof}

\begin{rmk}
In general the generators of $M$ don't generate all the lattice points of the Weyl chamber $C$ thus there are fans supported in $-C$ which don't yield maps to $\mathcal{X}$; e.g. $-C$ itself. In particular, the canonical embedding $X_{can}$ of the group $G$ does not admit a map to $\mathcal{X}$ unless $\mathcal{X} = X_{can}$.
\end{rmk}

\begin{rmk}
As noted in the introduction, most of this result has been proved by Martens and Thaddeus in \cite{Martens}.  For parts (a),(d) see specifically \cite[4.2,6.4]{Martens}.  Part (c) seems implicit in the treatment given in \cite{Martens} but part (b) is new.  The construction given in \cite{Martens} uses a construction they call the Cox-Vinberg monoid $S$ it is an affine variety with an action of $G \x T$.  An open subset $S^0 \subset S$ is identified and their compactification is given by $[S^0/T]$.  Martens explained to me that these constructions are the same.  Specifically results of \cite{Vinberg} allow one to show $S_0$ is a toroidal embedding of $G \x T$ and thus comes from a fan supported in the negative Weyl chamber, this is the fan given by $C_\Delta$ above.  Alternatively one can also see the equivalence between the Vinberg construction and given construction using universal torsors; this is explained in \cite[3.2.4]{Brion2}.
\end{rmk}

\begin{rmk}
Of course this stacky construction extends to a general reductive group over $\C$.  But for a general reductive group the Weyl chamber is not a strongly convex rational polyhedral fan.  That is, there is not a toric variety associated to the Weyl chamber; this happens for $GL_n$.  Thus to get an embedding of such groups we must further subdivide the Weyl chamber. 

Kausz does this for $GL_N$ as follows.  Let $\phi \colon GL_N \to PGL_{N+1}$ be given by composing $g \mapsto \left(\begin{smallmatrix} g & 0 \\ 0 & (\det g)^{-1}  \end{smallmatrix}\right)$ with the natural map $SL_{N+1} \to PGL_{N+1}$.  This induces a map $GL_n \supset T_1$ to $T_2 \subset PGL_{n+1}$ and an induced map on co-character $\phi_*\colon V_{T_1} \to V_{T_2}$.  Let $\Sigma \subset V_{T_1}$ be the inverse image of $-C$ under $\phi_*$.  Then Kausz's compactification is the one associated to $\Sigma$ via proposition \ref{prop:fan2embedding}.  Certainly there are other possibilities.
\end{rmk}

\section{Extension to Loop groups}
The aim of this section is to prove the analogue of theorems \ref{thm:G_ad}, \ref{thm:stackGad} for loop groups.

As mentioned in the introduction there is both the algebraic loop group and the smooth loop group.  In this section we focus on the algebraic loop group.  To a large extend the statements we prove also hold for the smooth loop group.  We remark along the way what modifications, if any, are necessary to get a statement for the smooth loop group.  When we apply these results to studying the moduli stack of bundles on curves we will be more explicit about the distinction between $LG$ and $\sLG$.
\subsection{Preliminaries on Loop Groups}\label{ss:loopdef} 
Here we introduce the groups $LG$, $\Ga$. Let $G$ be a simple algebraic group over $\C$ with $\pi_1(G) = \pi_0(G) = 1$. The loop group $LG$ is the functor from $\C$-algebras to groups given by
\[
R \mapsto LG(R):= G(R((z)) ): = G(\ec R((z)) )
\]
The functor $LG$ is represented by an ind-scheme of infinite type; see \cite{Fa2}.  Closely related is $\Gsd(R) := \mathbb{G}_m(R) \ltimes LG(R)$ which, for $u \in \mathbb{G}_m(R), \ga \in LG(R)$ is defined by
\nomenclature{$\Gsd$}{Semi direct loop group}
$u\ga(z)u^{-1} = \ga(u z)$. Finally the group $\Ga$ is a central extension of $\Gsd$:
\[1\to \mathbb{G}_m \to \Ga \to \Gsd \to 1\]
\nomenclature{$\Ga$}{Affine Kac-Moody group.}
As we are working over $\C$ we'll write $\Cs$ for $\mathbb{G}_m$. Topologically, $\Ga = \Cs \x LG \x \Cs_c$ where the subscript $c$ indicates the factor is central.

To put this in perspective, semi simple groups are finite dimensional Kac-Moody groups. $\Ga$ is an infinite dimensional Kac-Moody group. Despite being infinite dimensional, $\Ga$ and $LG$ behave much like a semi simple group. In particular, $\Ga$ has associated root data.

The maximal torus  of $\Ga$ is $\Ta := \Cs \x T \x \Cs_c$ and characters are denoted as $(n, \mu ,l) \in \Z\oplus \ch{T} \oplus \Z$; sometimes we abbreviate $\Tsd:=\Cs \x T \subset \Gsd$.	
\nomenclature{$\Ta$}{maximal torus of $\Ga$}
\nomenclature{$\Tsd$}{Maximal torus of $\Gsd$}
If $\alpha_1, \dotsc, \alpha_r$ be the simple roots of $G$ then the {\it simple affine roots} of $\Ga$ are $(0,\alpha_1,0), \dotsc, (0,\alpha_r,0), \a_0 = (1,-\theta,0)$ where $\theta$ is the longest root of $G$.  
\nomenclature{$\a_0$}{Affine root of $\Ga$}
By abuse of notation we denote $(0, \alpha_i,0)$ simply by $\a_i$ so the simple roots of $\Ga$ are $\a_0,\dotsc, \a_r$.  

In addition we have a standard Borel $\Ba := \Cs \x \cB\x\Cs_c$ where $\cB$ along with related subgroups is defined below.
\begin{table}[htbp]
  \centering
  \begin{tabular}{@{} ll @{}}
$L^+G(R) := G(R[[z]])$  & $ L^{-}G(R) := G(R[z^{-1}])$\\
$\cB(R) := \{\ga \in L^+G(R)| \ga \in B(R) \mod z\}$ & $\cB^{-}(R) := \{\ga \in L^{-1}G(R)| \ga \in B^-(R) \mod z^{-1}\}$\\
$\cU(R) := \{\ga \in L^+G(R)| \ga \in U(R) \mod z\}$ & $\cU^{-}(R) := \{\ga \in L^{-}G(R)| \ga \in U^-(R) \mod z^{-1}\}$\\
  \end{tabular}
\end{table}
\nomenclature{$L^+G$,$L^-G$,$\cB$, $\cB^-$, $\cU$, $\cU^-$}{Subgroups of the loop group}
Further, the affine Weyl group $\Wa$ is by definition $N(\Ta)/\Ta$.  It is isomorphic to $W \ltimes V_T$ and we have the Birkhoff factorization \cite[pg.142]{Kumar} for $\Ga$
\nomenclature{$\Wa$}{Affine Weyl group}
\begin{align}\label{Birk}
\begin{split}
\Ga =& \bigsqcup_{w\in W^{aff}} \cU^-w\Ba
\end{split}
\end{align}
Of course this restrict to give a similar decomposition for $LG$ and $\Gsd$. Also we have an analogous group $\Gap$ where $LG$ is replaced by $L_{poly}G := G(\C[z^\pm])$.
\nomenclature{$\LpG$}{Polynomial loop group}
\nomenclature{$\Gap$}{Polynomial affine Kac-Moody group associated to $G$}

Similarly there is a group $\Gas$ where $LG$ is replaced by $\sLG = C^\infty(S^1,G(\C))$. If we identify $S^1$ with $\{|z| = 1\} \subset \C$ then, for example, $L^{+,sm}G = $ boundary values of holomorphic maps $\{ |z| < 1\} \to G$ and $L^-G$ is the subgroup of boundary values of holomorphic maps $\{|z|> 1\} \to G$, etc.  A reference for the smooth version of \eqref{Birk} is \cite[8.7.3a]{Segal}. 

In addition to the simple affine roots, we also have roots $(k,\alpha,0)$ where $\alpha$ is a root of $G$.  For $k \ne 0$ $\alpha = 0$ is allowed so in general the weight spaces are not 1 dimensional as $\dim \mfg_{(k \ne 0, 0,0)} = \dim \mft$.  $\Wa$ acts on the roots and one can talk about a Weyl chamber however it is standard practice to introduce instead Weyl alcoves. Namely, the roots are linear forms on $\R \oplus \mathfrak{t}_\R$ and we can identify them with affine linear forms on $\mathfrak{t}_\R$ by identifying the Lie algebra with $1 \oplus \mathfrak{t}_\R$.

For $\alpha \ne 0$ we can define affine hyperplane in $\mathfrak{t}_\R$ via
\[
H_{k,\a} = \{\zeta \in \mathfrak{t}_\R| \a(\zeta) = -k\}
\]
The complement of all the $H_{k,\a}$ is known as the {\it Weyl alcove decomposition} of $\mft_\R$.  $\Wa$ acts freely on $\mft_\R$ and permutes the alcoves in the decomposition.    A fundamental domain is given by the {\it positive Weyl alcove}
\[
Al_0 := \{\zeta \in \mathfrak{t}_\R| \a_i(\zeta) \ge 0,\ i = 0, \dotsc,r \}
\]
\nomenclature{$Al_0$}{Positive Weyl alcove}
\begin{rmk}\label{thickthin}
There are many parallels between $G$ and $\Ga$ but there are also clear differences. For example, the negative versions $L^-G,\cB^-,$ etc. are neither conjugate nor abstractly isomorphic to the positive versions.  This asymmetry is also reflected in the flag varieties of $LG = G((z))$: $LG/\cB$ is a projective ind-variety (union of finite dimensional projective varieties) and $LG/\cB^-$ is an infinite dimensional scheme.  The former we call a {\it thin} flag variety and the later a {\it thick} flag variety.
\end{rmk}

If $\omega_1,\dotsc, \omega_r$ are the fundamental weights of $G$ then the fundamental weights of $\Ga$ are $\omega_0 = (0,0,1),\ (0,\omega_1,1) \dotsc, (0,\omega_r,1)$. The definitions of dominant and regular weight carry over from the finite dimensional case.

\begin{prop}\label{rep1}
Let $(0, \lambda, h)$ be a regular dominant weight.  There exists a representation $V = V(0,\lambda,h)$ of $\Ga$ with the following properties
\begin{itemize}
\item[(a)] If $\mu$ is any other wight of $L$ then $\lambda - \mu$ is a sum of positive roots.
\item[(b)] $\lambda - \alpha_i$ is a weight of $V$ for all $i$.
\item[(c)] The stabilizer of the weight space $V_{\lambda}$ in $\P V$ is $\cB^{aff}$.
\item[(d)] The morphism $\Ga \bigl / \cB^{aff} = LG\bigl / \cB \to \P(V)$ given by $\gamma \cB \mapsto \gamma V_{\lambda}$ is injective and gives $LG\bigl / \cB$ the structure of a projective ind scheme; in particular $LG\bigl / \cB$ is closed in $\P(V)$.
\item[(e)] The action of $\Ga$ on $\P(V)$ factors through a faithful action of $\Ga \big/ Z(\Ga) = \Gsd/Z(G)$.
\end{itemize}
\end{prop}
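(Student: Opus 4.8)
\emph{Proof proposal.} The plan is to realize $V$ as an integrable highest weight module for the affine Kac--Moody algebra attached to $\Ga$, and then to extract (a)--(e) from the standard representation theory of such modules; the only step that is not essentially formal is the identification of the center of $\Ga$ in (e).

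Concretely, set $\Lambda = (0,\lambda,h)$ and let $V = V(0,\lambda,h)$ be the irreducible integrable highest weight $\Ga$-module of highest weight $\Lambda$, with highest weight vector $v_\Lambda$; its existence, the decomposition $V = \bigoplus_\mu V_\mu$ into finite-dimensional $\Ta$-weight spaces, and the integrability of the Lie-algebra action to an algebraic action of $\Ga$ are standard (see \cite{Kumar}). Then (a) is the defining property of a highest weight module: every weight is $\mu = \Lambda - \sum_{i=0}^r n_i\alpha_i$ with $n_i \in \Z_{\ge 0}$, so $\Lambda - \mu$ is a sum of positive roots. And (b) follows from $\mathfrak{sl}_2$-theory for the triple $(e_i,\alpha_i^\vee,f_i)$: regularity gives $\langle\Lambda,\alpha_i^\vee\rangle \ge 1$ for every $i = 0,\dots,r$, so $f_i v_\Lambda \ne 0$ and $\Lambda - \alpha_i$ is a weight.

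For (c) I would argue with the Birkhoff factorization \eqref{Birk}. The line $\C v_\Lambda$ is fixed by $\cB^{aff}$ because $v_\Lambda$ is fixed by the positive root subgroups and scaled by the torus. Conversely, if $g$ fixes $[v_\Lambda]$, write $g = u^-\dot w b$ with $u^-\in\cU^-$, $\dot w$ a lift of $w\in\Wa$, $b\in\cB^{aff}$; then $u^-[\dot w v_\Lambda] = [v_\Lambda]$. Since $\dot w v_\Lambda$ spans $V_{w\Lambda}$, since $\cU^-$ is a product of negative affine root subgroups and so sends $V_\nu$ into $V_\nu \oplus \bigoplus_{\eta<\nu}V_\eta$, and since $w\Lambda \le \Lambda$ for all $w$, comparing $\Lambda$-weight components forces $w\Lambda = \Lambda$; as $\Lambda$ is regular, $w = e$. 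The same bookkeeping — using that the weight set is saturated, so $\mathfrak{g}_{-\beta}v_\Lambda \ne 0$ for every positive real root $\beta$ — forces $u^- = e$, hence $g = b\in\cB^{aff}$. (Alternatively (c), and the rest of (d), are part of the standard construction of the thin affine flag variety.) Given (c), part (d) is formal: the orbit map $\Ga\to\P(V)$, $g\mapsto g[v_\Lambda]$, has every fiber equal to $\cB^{aff}$, so it descends to an injection $\Ga/\cB^{aff} = LG/\cB \hookrightarrow \P(V)$; that this is a closed immersion exhibiting $LG/\cB$ as the increasing union of its finite-dimensional projective Schubert varieties is the usual construction of the thin affine flag variety, for which I would cite \cite{Kumar} (and \cite{Segal} for the smooth analogue).

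Part (e) is where real structure theory enters, and I expect it to be the main obstacle. Irreducibility of $V$ and Schur's lemma show $Z(\Ga)$ acts by scalars on $V$ — in particular the central torus $\Cs_c$ (acting via the level character $t\mapsto t^h$) and the constant central loops $Z(G)\subset LG$ act trivially on $\P(V)$ — so the $\Ga$-action on $\P(V)$ descends to $\Ga/Z(\Ga)$. Using the identification $\Ga/Z(\Ga) = \Gsd/Z(G)$, it remains to prove this action is faithful: an element acting trivially on $\P(V)$ fixes every point of $LG/\cB \subset \P(V)$, hence lies in $\bigcap_{x\in\Ga}x\cB^{aff}x^{-1}$, and identifying this intersection with $Z(\Ga)$ finishes the argument. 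Pinning down $Z(\Ga)$ and this intersection — rather than manipulating weights formally — is the genuine content of (e).
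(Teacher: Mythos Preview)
Your proposal is correct and follows essentially the same approach as the paper: both rely on standard Kac--Moody representation theory, and the paper's own proof is in fact little more than a list of citations to \cite{Kumar} (with the same $\mathfrak{sl}_2$ argument for (b) that you give). Your treatments of (c) via Birkhoff and (e) via $\bigcap_x x\cB^{aff}x^{-1} = Z(\Ga)$ are more explicit than the paper's bare citations, but amount to the same content.
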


\begin{proof}\noindent The existence claim is contained in \cite[13.2.8]{Kumar}.
\begin{enumerate}  
\item[(a)] \cite[1.3.22]{Kumar}.
\item[(b)] This follows form \cite[1.3.22]{Kumar} and the representation theory of $SL_2$.  Namely, for each simple root, consider the reflection $s_i(\lambda)$.  It is of the form $ \lambda - n \alpha_i$ for $n \ge 1$ and all the weights $\lambda - m \alpha_i$ for $0\le m < n$ are weights of the representation.
\item[(c)] \cite[7.1.2]{Kumar}

\item[(d)] \cite[Ch.7.1]{Kumar}

\item[(e)] \cite[13.2.8]{Kumar}.  More specifically we have the following commutative diagram
\[\xymatrix{ \Ga \ar[r]\ar[d]^{\psi} & GL(V)\ar[d]\\
\Gsd/Z(G)\ar[r] & PGL(V)}
\]
where $\psi$ is a surjective group homomorphism and $\ker \psi = Z(\Ga)$.
\end{enumerate}
\end{proof}

\begin{rmk}
The statements (a) - (c) and (f) for $\sLG$ are proved in \cite[ch.8,9,11]{Segal}.  The first half of (d) also holds \cite[8.7.6]{Segal} but there is no ind-statement to make about the flag varieties of the smooth loop group; they are infinite dimensional complex projective algebraic varieties.  
\end{rmk}

For more details on $\Ga$, its subgroup and its representation theory see \cite[chap. 13]{Kumar} or \cite{Bea} for $G = SL_r$. 

\begin{rmk}\label{PER}
There are three duals associated to a HWR $U = \oplus_\mu U_\mu$.  There is $U^*_{res} = \oplus_\mu U^*_\mu$ and $U^* = \prod_\mu U_\mu$ and a third called a positive energy representation (PER) $U_{pos}^*$.
\nomenclature{PER}{positive energy representation}%
PER result from completing $U_{res}^*$ with respect to a norm for which elements in $L^{sm}K$ act as unitary operators; $K$ is a compact form of $G$.  

Statements $(a),(b),(c),(e)$ still hold for $LG$ acting on $U^*$.  Again, statement $(d)$ is different because $LG/\cB^-$ is a thick flag variety. If we work with the polynomial loop group $L_{poly}G(R) = G(R[z^\pm])$ then proposition \ref{rep1} holds equally well for  both $U,U^*_{res}$.
\end{rmk}

We end this subsection with the following technical but important result. The result requires the notion of closure in $\P V$ when $\dim V$ is uncountably infinite.  We say $Y \subset \P U$ is {\it closed} if the inclusion $Y \to \P U$ satisfies the valuative criterion for completeness.  Colloquially $Y$ is closed if any limit point of $Y$ that exists in $\P U$ belongs to $Y$.  For example the map $\Cs \to U= \prod_{n \in \mathbb{N}} \C$ given by $t \mapsto (1, t,t^{-1}, t^2, t^{-2}, \dotsc)$ has no limit in $\P U$ as $t \to 0$ or $t \to \infty$ and consequently is a closed subset.

\begin{lemma}\label{l:cl.or.K-M}
Let $V$ a HWR of $\Ga$.  The orbit of a HW vector is the only closed orbit in $\P V$.  The orbit of a lowest weight vector is the only closed orbit in $\P V^*$.  The same statement holds for $\Gap$ acting on $\P V, \P V^*_{res}$.
\end{lemma}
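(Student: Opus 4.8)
The plan is to reduce the Kac–Moody statement to a direct computation using the Birkhoff factorization \eqref{Birk} rather than invoking a Borel fixed point theorem. Let $v_\la \in V_\la$ be a highest weight vector. First I would show that $\ol{\Ga \x \Ga . [v_\la]}$ — more precisely, the $\Ga$-orbit of $[v_\la] \in \P V$ — is closed, in the valuative sense introduced just before the lemma: a one-parameter family $\ga(s).[v_\la]$ degenerating as $s \to 0$ must, after using the factorization $\ga(s) = u^-(s) w(s) b(s)$ with $u^-(s) \in \cU^-$, $w(s) \in \Wa$, $b(s) \in \Ba$, have $b(s)$ act on $[v_\la]$ trivially (since $\Ba$ stabilizes $[v_\la]$ by Proposition \ref{rep1}(c)), so the family is $u^-(s) w(s).[v_\la]$; one then checks directly in coordinates on the weight-space decomposition $V = \prod_\mu V_\mu$ that such a family either has no limit in $\P V$ or its limit again lies in the orbit, because the leading weight component can only be $w(s)\la$-type and the $\cU^-$-tail is ``lower.'' This is the same mechanism as in the proof that $LG/\cB$ is ind-projective (Proposition \ref{rep1}(d)): the orbit of $[v_\la]$ is precisely the thin flag variety $LG/\cB \hookrightarrow \P V$, which is already known to be closed.

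Next I would prove uniqueness: if $\cO \subset \P V$ is any nonempty closed $\Ga$-stable subset, I want to show $[v_\la] \in \ol{\cO}$, hence $\cO \supset \overline{\Ga.[v_\la]}$, and then by minimality $\cO = \Ga.[v_\la]$. Pick any $[w] \in \cO$. Using the $\Ba$-action (equivalently the torus $\Ta$ together with $\cU$), run the standard limit argument: write $w = \sum_\mu w_\mu$ in the weight decomposition, let $\mu_0$ be a maximal weight with $w_{\mu_0} \ne 0$, and act by a one-parameter subgroup $\eta \colon \Cs \to \Ta$ chosen so that $\la - \mu_0$ pairs positively with $\eta$; then $\lim_{s \to 0} \eta(s).[w]$ exists in $\P V$ — here I must use Proposition \ref{rep1}(a), which says every weight $\mu$ of $V$ satisfies $\la - \mu \in \sum_i \Z_{\ge 0}\a_i$, so that after rescaling by the $\mu_0$-component all exponents are nonnegative and the limit is genuinely a point of $\P V$, not a ``divergent'' family — and this limit is $[w_{\mu_0} + (\text{strictly lower terms that die})]$, i.e. a point supported in weights $\le \mu_0$ with $\mu_0$-component surviving. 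Iterating, or rather choosing $\eta$ generic and pushing all the way, the limit lands in the extreme weight line; then acting by $\Ga$ (using that $\Wa$ acts transitively on the extreme weights of the $\Ga$-orbit through the analogue of the $SL_2$-argument in Proposition \ref{rep1}(b)) moves it to $[v_\la]$. Since $\cO$ is closed and $\Ga$-stable, $[v_\la] \in \cO$.

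The dual statement for $\P V^*$ is formally identical after replacing "highest" by "lowest": $V^* = \prod_\mu V_\mu^*$ carries the contragredient action, its weights are the negatives of the weights of $V$, the lowest weight vector $v_\la^* \in V_\la^*$ is stabilized (projectively) by $\Ba$, and the same factorization-plus-limit argument applies; the only point to note is that closedness is again interpreted in the valuative sense since $\dim V^*$ may be uncountable, but the orbit $\Ga.[v_\la^*]$ is the thick flag variety and the argument goes through unchanged. Finally, for $\Gap$ acting on $\P V$ and $\P V^*_{res}$: here $V^*_{res} = \oplus_\mu V_\mu^*$ is an honest direct sum so no valuative subtlety arises, the Birkhoff factorization restricts to $\Gap$ as noted after \eqref{Birk}, and Proposition \ref{rep1} holds verbatim for $U, U^*_{res}$ by Remark \ref{PER}, so the identical proof yields the claim. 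The main obstacle is making the limit argument in the second paragraph fully rigorous when $V$ is infinite-dimensional — one must be careful that the one-parameter degeneration actually converges in $\P V = \P(\prod_\mu V_\mu)$ and does not escape to infinity, and this is exactly where Proposition \ref{rep1}(a) (the ``$\la$ is the top'' property) does the essential work.
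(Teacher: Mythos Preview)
Your proposal has the right architecture but two steps do not go through as written.

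\textbf{Uniqueness.} The torus-limit-then-Weyl-group plan does not land where you want it to. A generic one-parameter subgroup $\eta$ applied to $[w] = [\sum_\mu w_\mu]$ picks out, in the limit, $[w_{\mu_0}]$ for the weight $\mu_0$ minimizing $\langle \mu,\eta\rangle$ among the $\mu$ with $w_\mu \ne 0$; but $\mu_0$ need not lie in the Weyl orbit $\Wa.\la$, and when $\dim V_{\mu_0}>1$ the point $[w_{\mu_0}]$ need not be in $\Ga.[v_\la]$ at all. Acting by $\Wa$ only permutes the extreme weight lines $V_{w.\la}$, so from a non-extreme $\mu_0$ you cannot reach $[v_\la]$ this way, and the appeal to ``the $SL_2$-argument in Proposition~\ref{rep1}(b)'' does not close this gap. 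The paper instead uses the \emph{unipotent} one-parameter subgroups $U_i(1/b)=\exp(b^{-1}X_i)$ for each simple root $\a_i$ in turn: since $X_i$ acts locally nilpotently the limit $b\to 0$ exists in $\P V$, and writing $v$ as a combination of monomials $Y_0^{n_0}\cdots Y_r^{n_r}\cdot v_\la$ one sees (using $[X_i,Y_j]=0$ for $i\ne j$) that the successive limits $v(0),\dots,v(r)$ kill these exponents one at a time, forcing $[v(r)]=[v_\la]$. A generic torus limit appears only as a \emph{preliminary} step in the $V^*$ case, to cut a possibly infinite weight expansion down to a finite one before running this unipotent argument.

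\textbf{Closedness.} For $\P V$ you may indeed cite Proposition~\ref{rep1}(d), but for $\Ga$ acting on $\P V^*$ the orbit of the lowest weight vector is the \emph{thick} flag variety $LG/\cB^-$, and nothing in the paper or in the cited references asserts that this is closed in $\P V^*$ in the valuative sense; ``the argument goes through unchanged'' is precisely the step that does not. The paper's proof avoids this by bootstrapping from the uniqueness argument: if $[w]\in \ol{\Ga.[v_\la]}\setminus \Ga.[v_\la]$ then, since the complement of the orbit in its closure is $\Ga$-stable and closed, the entire sequence $[w(0)],\dots,[w(r)]$ of unipotent limits stays in that complement --- but $[w(r)]=[v_\la]$, a contradiction. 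This argument is indifferent to whether one works in $\P V$ or $\P V^*$ and never needs to know in advance that any flag variety is closed.
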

\begin{proof}
Suppose the orbit of $v$ is closed in $\P V$.  Then for any $1$-parameter subgroup $\mathbb{G}_a \subset \Ga$ we have the closure $\overline{\mathbb{G}_a.v}$ is also in the orbit of $v$.  We use this observation to show that if $G.[v]$ is closed in $\P V$ then $\Ga.[v] = \Ga.[v_\la]$ where $\la$ is the HW of $V$.

For a positive simple root $\a_i$ set $U_i(b) := \exp(b\cdot X_i)$ with $b \in \C$. The representation $V$ is in particular an integrable representation of $\mfg^{aff}$ thus $X_i$ acts locally nilpotently on $V$.  So for any $v$ the vector $U_\a(b).v$ is a vector whose finitely many nonzero entries are a polynomial function of $b$.  It follows that the limit $b \mapsto 0$ exists in $\P U$ as well as the limit $b \mapsto \infty$; we write the latter as the limit $b \mapsto 0$ of $U_\a(1/b).v$.

Thinking of $V$ as a quotient of a Verma module we can write any vector as a sum over a finite index set $I$
\begin{equation}\label{eq:w}
v = \sum_{\vec n = (n_0, \dotsc, n_r) \in I} c_{\vec n}\  1_\la \otimes Y_0^{n_0} \dotsm Y_r^{n_r},
\end{equation}
where $1_\la$ spans a one dimensional representation of $\Tsd \x \Cs_c$ with weight $\la$. The action of $(1/b)X_0$ on $v$ lowers the exponent of $Y_0$, adds a factor of $(1/b)$ and leaves the other $Y_i^{n_i}$ unchanged.  It follows that
\[
v(0):= \lim_{b \to 0} U_{0}(1/b).v = \sum_{\vec n = (n_1, \dotsc, n_r) \in I'}c'_{\vec n}\  1_\la \otimes Y_1^{n_1} \dotsm Y_r^{n_r}
\]
Define $v(i)$ inductively by $v(i) = \lim_{b \to 0} U_{i}(1/b).v(i-1)$.  For any vector $v$ we have $[v(r)] = [v_\la] \in \P V$.  It follows that if $G.[v]$ is closed then $G.[v] = G.[v_\la]$. 

The same argument goes through for $U^*$ and $U^*_{res}$.  All that changes for $U^*$ is the sum \eqref{eq:w} is now potentially infinite but we can correct this by replacing $v$ with $\lim_{s \mapsto 0} \eta(s).v$ for a generic $\eta \in V_{\Tsd}$.  In this case $\eta$ acts nontrivially on the $v_\mu$ and $v' = \lim_{s \mapsto 0} \eta(s).v$ is supported in the weight spaces $V_\mu$ such that $\mu$ minimizes the function $\chi \mapsto \l \chi, \eta \r$.  This function is a quadratic function on a lattice so there are only finitely many $\mu$ that minimize it; see section \ref{sec:LT} for more details.  

We have shown if there is a closed orbit it must be that of a HW vector.  Let us show this is indeed closed.  Let $Y = \ol{\Ga.[v_\la]} \subset \P V$. Suppose $[w] \in Y - \Ga.[v_\la]$; we have $\ol{\Ga.[w]} \subset Y - \Ga.[v_\la]$ as $Y - \Ga.[v]$ is $\Ga$-stable and closed.  In particular, each $[w(i)] \in Y - \Ga.[v_\la]$ but this contradicts that $[w(r)] = [v_\la]$.
\end{proof}
\begin{rmk}
This result only relies on basic properties of representations of $LG$ also holds for $\sLG$ acting on $U^{pos}$; as $U^{pos}$ is a topological vector space the notion of closure is already well defined.
\end{rmk}

\subsection{The Wonderful Embedding of the Loop Group} The main result of this subsection is theorem \ref{thm:loopGad}.  We follow the same outline as in section \ref{s:Gad}.   

\subsubsection{Construction of $\Xa$}
Naively we would like to choose a regular dominant weight $\la$ of $\Ga$ and define
\[
\ol{\Gaa} := \ol{\Ga \x\Ga.[id]} \subset  \P (V(\la)\otimes V(\la)^*)
\]
however one immediately runs into the problem that $id\not\in V(\la)\otimes V(\la)^*$; instead we work with $V(\la)\widehat{\ox} V(\la)^*:= \prod_{\mu, \chi \in \ch{\Ta}} V_\mu \ox V_\chi^*$. We can write $\Ga = \bigcup_k (\Ga)_k$ where $(\Ga)_k$ is an infinite dimensional scheme always containing $Z(\Ga)$; the inversion map $g \mapsto g^{-1}$ is a closed embedding from $(\Ga)_k$ into some $(\Ga)_{k'}$ and in this way $(\Ga)_k^{-1}$ inherits a scheme structure.  We have a morphisms of schemes $(\Ga)_k \x (\Ga)_k^{-1} \xrightarrow{f_k} \P \big(V(\la)\widehat{\ox} V(\la)^* \big)$ given by $(g,h) \mapsto g.[id].h$.  Define

\begin{align}\label{Xak}
\begin{split}
\Xa_k &:= \ol{\im f_k}= \ol{(\Ga)_k.[id].(\Ga)_k^{-1}} \subset \P \big(V(\la)\widehat{\ox} V(\la)^* \big)\\
\Xa &:= \bigcup_k \Xa_k
\end{split}
\end{align}
\begin{prop}
$\Xa$ is an ind scheme of infinite type. $\Xa$ contains an open sub ind-scheme isomorphic to $\Gaa = \Gsd/Z(G)$.
\end{prop}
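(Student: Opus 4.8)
The plan is to establish two things: first, that $\Xa = \bigcup_k \Xa_k$ is an ind-scheme of infinite type, and second, that it contains $\Gaa = \Gsd/Z(G)$ as an open sub ind-scheme. For the ind-scheme structure, I would argue that each $\Xa_k$ is a closed subscheme of $\Xa_{k+1}$: since $(\Ga)_k \subset (\Ga)_{k+1}$ as a closed subscheme and inversion $(\Ga)_k \to (\Ga)_{k'}$ is a closed embedding, the map $f_k$ factors through $f_{k+1}$ compatibly, and taking closures of images of the morphisms $f_k$ (in the sense of closure in $\P(V(\la)\widehat\ox V(\la)^*)$ defined via the valuative criterion, as in the discussion preceding Lemma \ref{l:cl.or.K-M}) one gets a nested sequence $\Xa_k \hookrightarrow \Xa_{k+1}$ of closed immersions of schemes. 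That each $\Xa_k$ is genuinely infinite-dimensional follows because it already contains the image of $(\Ga)_k$, which is infinite-dimensional; hence $\Xa$ is of infinite type.

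For the open-subscheme claim, the key point is that the rational map $g \mapsto g.[id]$ is defined on all of $\Ga$ — because $id = \sum_\mu \mathrm{pr}_{V_\mu} \in V(\la)\widehat\ox V(\la)^*$ lies in the completed tensor product, and the $\Ga \x \Ga$-action on the projective ind-scheme $\P(V(\la)\widehat\ox V(\la)^*)$ moves $[id]$ around — so $\Ga \x \Ga.[id]$ is a well-defined orbit. By Proposition \ref{rep1}(e) the action of $\Ga \x \Ga$ on $\P(V(\la)\widehat\ox V(\la)^*)$ factors through $\Gsd/Z(G) \x \Gsd/Z(G)$, and one checks the stabilizer of $[id]$ is exactly $Z(\Ga)\x Z(\Ga)\cdot\Delta(\Ga)$ — the same computation as in the finite-dimensional case (an element $(g,h)$ fixes $[id]$ iff $g.[id].h^{-1} = [id]$ iff conjugation by $g h^{-1}$ acts as a scalar on every $\hom(V_\mu,V_\nu)$, forcing $gh^{-1} \in Z(\Ga)$). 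Hence $\Ga\x\Ga.[id] \cong \Gsd/Z(G)$ as a homogeneous space, sitting inside $\Xa$. Openness then follows by the same mechanism as in Lemma \ref{l:X0} / Theorem \ref{thm:G_ad}: the orbit $\Ga\x\Ga.[id]$ meets the open stratum where the $V_\la\ox V_\la^*$-component is invertible, and its complement in $\Xa$ is $\Ga\x\Ga$-stable and closed, hence a union of lower-dimensional orbit closures; one exhibits each $\Ga\x\Ga.[id] \cap \Xa_k$ as an open subscheme of $\Xa_k$ (it is the non-vanishing locus of an appropriate section, intersected with $\Xa_k$), and these are compatible, giving an open sub ind-scheme.

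The main obstacle I anticipate is making precise the scheme-theoretic structure and the closed-immersion property $\Xa_k \hookrightarrow \Xa_{k+1}$ in the setting where $V(\la)\widehat\ox V(\la)^*$ is an uncountable product: one has to check that "closure" in the valuative-criterion sense behaves functorially and that $\Xa_k$ really is a scheme (not merely an ind-scheme) — this is where the hypothesis that each $(\Ga)_k$ is an honest infinite-dimensional scheme and that $f_k$ is a morphism of schemes is doing the work. A secondary technical point is verifying that $\Ga\x\Ga.[id]$ is locally closed rather than merely constructible; I would handle this by reducing, via the Birkhoff factorization \eqref{Birk} and the torus case, to checking it on the big cell $\cU^-\x\oTsad\x\cU$, exactly paralleling the proof strategy flagged for Lemma \ref{l:X0}. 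Everything else is a routine transcription of the finite-dimensional arguments of Section \ref{s:Gad}, with Lemma \ref{l:cl.or.K-M} substituting for the Borel fixed point theorem.
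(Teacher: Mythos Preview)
Your proposal is correct and covers strictly more than the paper's own proof, which is very terse. The paper disposes of the first assertion with ``The first assertion is clear'' and for the second simply shows that $\im f_k = (\Ga)_k/Z(\Ga)$, without explicitly addressing openness at all. The one notable simplification in the paper's argument that you might adopt: rather than computing the full stabilizer of $[id]$ under $\Ga\times\Ga$, the paper observes that the restriction $f_k|_{(\Ga)_k\times id}$ already surjects onto $\im f_k$, and on this slice the fiber over $[id]$ is just $(\Ga)_k\times id \cap f_k^{-1}([id]) = (Z(\Ga),id)$ by faithfulness of the projective action modulo center (Proposition~\ref{rep1}(e)). This avoids the two-sided conjugation argument and directly gives $\im f_k \cong (\Ga)_k/Z(\Ga)$; the identification $\Ga/Z(\Ga) = \Gsd/Z(G)$ then follows from $Z(\Ga) = \Cs_c\times Z(G)$.

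Your additional care about the ind-scheme structure, the meaning of closure in $\P(V(\la)\widehat\ox V(\la)^*)$, and the local-closedness/openness of the orbit is well-placed --- these are genuine subtleties in the infinite-dimensional setting --- but the paper treats all of them as routine and does not spell them out. Your reduction of openness to the big-cell picture is more than the paper does here (the paper defers that structure to Proposition~\ref{p:loopX0}); nothing you wrote is wrong, it is simply more thorough than the author chose to be at this point.
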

\begin{proof}
The first assertion is clear.  For the second, it suffices to show $\im f = (\Ga)_k/Z(\Ga)$.  In fact $f_k|_{(\Ga)_k \x id}$ is surjective and $(\Ga)_k \x id\cap f_k^{-1}([id]) = (\Ga)_k \x id \bigcap Z(\Ga) \x Z(\Ga) \Delta(\Ga) = (Z(\Ga),id)$; consequently $f_k((\Ga)_k \x id) = (\Ga)_k/Z(\Ga)$.  The equality $\Gaa = \Gsd/Z(G)$ follows from proposition \ref{rep1}(e): $Z(\Ga) = \Cs_c \x Z(G)$.
\end{proof}
\begin{rmk}
This construction immediately extends to $\Gap$ to give $\Xap$. For $\Gas$ the construction is even simpler.  Note $V(\la)_{pos},V(\la)^*_{pos}$ are Hilbert spaces and there is a good notion of tensor product for Hilbert spaces.  We still have $id \not \in V(\la)_{pos}\ox V(\la)^*_{pos}$ but for a generic 1 parameter subgroup $\eta \colon \Cs \to \Tsd$ and for $|t|$ sufficiently small we have $\eta(t).id \in V(\la)_{pos}\ox V(\la)^*_{pos}$.  The stabilizer of $[\eta(t) id]$ is conjugate to $Z(\Gas) \x Z(\Gas) \Delta(\Gas)$. Set $\Xas$ to be the closure of the orbit of $[\eta(t) id]$ in $\P(V(\la)_{pos}\ox V(\la)^*_{pos})$; it contains an orbit isomorphic to the adjoint of $\Gas$ as an open orbit. 
\end{rmk}
\nomenclature{$\Xap$}{Polynomial embedding}
\nomenclature{$\Xas$}{Smooth embedding}

\subsubsection{Properties of the Wonderful Embedding}
In this subsection we prove theorem \ref{thm:loopGad} which is the loop group analogue of theorem \ref{thm:G_ad}. Following section \ref{s:Gad}, we define $\Xa_{k,0}$ to be the open subset where projection to $V_\la \otimes V_\la^*$ is not zero.  $\Xa_0 = \cup_k \Xa_{k,0}$ is an open ind-subscheme of $\Xa$.
\nomenclature{$\Xa$}{Wonderful embedding of $\Gsd/Z(G)$}%
\nomenclature{$\Xa_0$}{ Open cell of $\Xa$}
$\Xa$ is the {\it wonderful embedding} of $\Gaa$ and $\Xa_0$ is the {\it open cell} of $\Xa$.  A maximal torus for $\Gaa$ is $\Tsad := \Tsd/Z(G)$.  By $\ol{\Tsad}$ we mean $\cup_k \ol{\Tsad}_k$ where the subscript denotes closure in  $\Xa_k$ and $\oTsad:= \Xa_0 \cap \ol{\Tsad}$; in fact $\oTsad$ is a constant ind-scheme. 
\nomenclature{$\Ga_{ad} = \Gsd/Z(G)$}{ Adjoint group of $\Ga$}
\nomenclature{$\Tsad$}{ Maximal torus for $\Ga_{ad}$}
\nomenclature{$\ol{\Tsad}$}{ Closure of $\Tsad$ in $\Xa$}
\nomenclature{$\oTsad$}{ Affine toric variety, open cell of $\ol{\Tsad}$}  

Let $t^{-\a_i}$ be the regular function on $\Tsad$ given by the character $-\a_i$.
\begin{prop}\label{p:loopT}
$\oTsad \cong \C[t^{-\alpha_0}, \dotsc, t^{-\alpha_l}] \cong \A^{l+1}$.  In particular $\oTsad$ is smooth and its fan is given by the negative Weyl alcove $-Al_0$; the fan $ \ol{\Tsad}$ is given by the Weyl aclove decomposition of $\mft^\ltimes_\R =  Lie(\Tsad)_\R$.
\end{prop}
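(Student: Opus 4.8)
The plan is to imitate the computation of $\oTad$ from Lemma~\ref{l:preimageX0}, keeping track of the one genuinely new feature: $V(\la)$ now has infinitely many weight spaces. First I would pin down the image of the torus. Write $V=V(\la)$ with its $\Ta$-weight decomposition $V=\bigoplus_\mu V_\mu$. Under the embedding $\Xa\subset\P\big(V\widehat{\ox}V^*\big)$ the maximal torus $\Tsad\subset\Gaa$ is carried to the diagonal operators $t\mapsto\big[\sum_\mu t^\mu\,\mathrm{id}_{V_\mu}\big]$, all of which lie in the open cell $\Xa_0$ since their $V_\la\ox V_\la^*$-component is $\la(t)\ne 0$. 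Normalizing by that component, the image of $t$ becomes the point whose $\mathrm{Hom}(V_\mu,V_\mu)$-coordinate is $t^{\mu-\la}$ and whose off-diagonal coordinates vanish. Hence $\oTsad=\ol{\Tsad}\cap\Xa_0$ is the closure of this locus, which — exactly as in Lemma~\ref{l:preimageX0} — is $\ec$ of the $\C$-subalgebra $A\subset\C[\ch{\Tsad}]$ generated by the characters $t^{\mu-\la}$ as $\mu$ ranges over the weights of $V(\la)$.

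Next I would compute $A$ using Proposition~\ref{rep1}. By part~(a), $\la-\mu=\sum_{i=0}^l n_i\a_i$ with $n_i\in\Z_{\ge 0}$, so $t^{\mu-\la}=\prod_i(t^{-\a_i})^{n_i}\in\C[t^{-\a_0},\dotsc,t^{-\a_l}]$; by part~(b), each $\la-\a_i$ is itself a weight, so each $t^{-\a_i}$ lies in $A$. Therefore $A=\C[t^{-\a_0},\dotsc,t^{-\a_l}]$. To see this is a free polynomial ring in $l+1$ variables, I would observe that $\a_0,\dotsc,\a_l$ are linearly independent characters of $\Ta$: the $\a_1,\dotsc,\a_l$ span the rank-$l$ root lattice of $G$, while $\a_0=(1,-\theta,0)$ has a nonzero loop-rotation component; and they remain independent as characters of $\Tsad=\Tsd/Z(G)$, since the roots are trivial on $Z(G)$ and passing to $\Tsad$ only replaces $\ch{\Tsd}$ by a finite-index sublattice. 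Thus $t^{-\a_0},\dotsc,t^{-\a_l}$ are algebraically independent, so $\oTsad\cong\ec\,\C[t^{-\a_0},\dotsc,t^{-\a_l}]\cong\A^{l+1}$; in particular $\oTsad$ is smooth. Since all the coordinates $t^{\mu-\la}$ are monomials in the finitely many $t^{-\a_i}$, the closure is already attained at bounded $k$, which is why $\oTsad$ is a constant ind-scheme.

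Finally I would read off the fans. Writing $\oTsad=\ec\,\C[\,\ch{\Tsad}\cap\sigma^\vee\,]$ with $\sigma^\vee=\mathrm{Cone}(-\a_0,\dotsc,-\a_l)$, the dual cone in $\mft^\ltimes_\R=V_{\Tsad}\ox\R$ is $\sigma=\{v:\l-\a_i,v\r\ge 0\}=\{v:\a_i(v)\le 0,\ i=0,\dotsc,l\}=-Al_0$, so the fan of $\oTsad$ is $-Al_0$ together with its faces. By the $\Wa\x\Wa$-equivariance of $\Xa$ — which follows, just as the $W\x W$-equivariance of $X$ in Theorem~\ref{thm:G_ad}, from $N(\Ta)$ acting compatibly on $V(\la)$ — one gets $\ol{\Tsad}=\bigcup_{w\in\Wa}w\cdot\oTsad$, whose fan is $\bigcup_{w\in\Wa}w(-Al_0)$ together with all faces; since $\Wa=W\ltimes V_T$ acts simply transitively on the alcoves of the hyperplane arrangement $\{H_{k,\a}\}$, this union is precisely the Weyl alcove decomposition of $\mft^\ltimes_\R$. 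The step I expect to be the main obstacle is the bookkeeping in the first paragraph: making the notion of closure inside the uncountably-infinite-dimensional $\P(V\widehat{\ox}V^*)$ precise, and checking that the infinitely many weight spaces of $V(\la)$ do not enlarge $\oTsad$ — the closure of $\Tsad$ in the open cell is controlled entirely by the finitely many edge monomials $t^{-\a_i}$, which is what forces $\oTsad$ to be a finite-dimensional affine space independent of $k$. Everything else runs parallel to the finite-dimensional statements in Lemma~\ref{l:preimageX0} and Theorem~\ref{thm:G_ad}.
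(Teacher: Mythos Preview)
Your proposal is correct and follows the same approach as the paper: the paper's proof simply says to use Proposition~\ref{rep1}(b) and repeat the argument of Lemma~\ref{l:preimageX0}, then deduces the fan of $\ol{\Tsad}$ from $\ol{\Tsad}=\bigcup_{w\in\Wa} w\,\oTsad\,w^{-1}$ and the fact that $-Al_0$ is a fundamental domain for $\Wa$. You have spelled out exactly this argument in full, including the use of Proposition~\ref{rep1}(a) (implicit in the paper's reference to Lemma~\ref{l:preimageX0}) and a careful check that the infinitely many weight spaces do not enlarge the closure; nothing is missing or different in substance.
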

\nomenclature{$\mft^\ltimes_\R$}{$\R$ Lie algebra of $\Tsd$}
\begin{proof}
Using proposition \ref{rep1}(b) one can use the same proof as in lemma \ref{l:preimageX0}.  The second statement follows because 
\[
\ol{\Tsad} = \cup_{w \in \Wa} w \oTsad w^{-1}
\]
and $-Al_0$ is a fundamental domain for the action of $\Wa$ on $\mft^\ltimes_\R$.
\end{proof}

Let $V = V(\la)$ be a HWR and let $v = v_{\la} \in V$ and $v^* \in V^*$ the dual vector.  Define $\P_v:=\{ v \ne 0\} \subset \P V$ and $\P_{v^*} = \{v^* \ne 0\} \subset \P V^*$.  The proofs of lemma \ref{l:loopX0} and proposition \ref{p:loopX0} are adapted from \cite[6.1.7]{Brion}; the extension for the loop group essentially requires replacing the Bruhat decomposition of a reductive group with the Birkhoff factorization of $LG$.
\begin{lemma}\label{l:loopX0} \noindent
 $LG.v \cap \P_v = \cU^{-}.v$ and $LG.v^* \cap \P_{v^*} = \cU.v^*$.  In particular $X_0$ is $\cU \times \cU^-$ stable. 
\end{lemma}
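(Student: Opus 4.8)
The plan is to adapt the classical computation for reductive groups (as in \cite[6.1.7]{Brion}) by replacing the Bruhat decomposition with the Birkhoff factorization \eqref{Birk}. I would prove the statement for $LG.v \cap \P_v = \cU^-.v$; the claim for $V^*$ follows by the dual argument (lowest weight vector, $\cU$ in place of $\cU^-$), and the last sentence is an immediate consequence since $X_0$ is exactly the locus where the projection to $V_\la \ox V_\la^*$ is nonzero, i.e. the product of the two conditions. First I would note the easy inclusion: since $\cU^-$ is generated by root subgroups for negative affine roots and $v = v_\la$ is a highest weight vector, each such root operator acts by $\exp$ of a locally nilpotent operator that adds to $v_\la$ only vectors in strictly lower weight spaces $V_\mu$ with $\mu = \la - (\text{sum of positive roots})$, using proposition \ref{rep1}(a); hence the $V_\la$-component of any element of $\cU^-.v$ is a nonzero multiple of $v_\la$, so $\cU^-.v \subset LG.v \cap \P_v$.

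For the reverse inclusion, take $\ga \in LG$ with $[\ga.v] \in \P_v$, i.e. the $V_\la$-component of $\ga v_\la$ is nonzero. Write $\ga = u^- w b$ according to the Birkhoff factorization \eqref{Birk}, with $u^- \in \cU^-$, $w \in \Wa$, $b \in \Ba$ (restricted to $LG$). Since $b \in \Ba$ stabilizes the line $\C v_\la$ by proposition \ref{rep1}(c), we have $[\ga.v] = [u^- w. v_\la]$, and applying $(u^-)^{-1} \in \cU^-$ — which preserves $\P_v$ by the previous paragraph — we reduce to the case $\ga = w$. Now $w.v_\la$ lies in the weight space $V_{w\la}$, which meets $V_\la$ nontrivially only if $w\la = \la$; since $\la$ is regular this forces $w$ to fix $\la$, and regularity of $\la$ (it is not on any wall) together with the faithful action of $\Wa$ on $\mft^\ltimes_\R$ forces $w = e$. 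Therefore $[\ga.v] = [u^-.v] \in \cU^-.v$, completing the reverse inclusion.

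The main obstacle I anticipate is a careful treatment of the factor $b \in \Ba$: unlike in the finite-dimensional case, $\Ba = \Cs \x \cB \x \Cs_c$ involves the loop-rotation and central factors, and one must check that the full $\Ba$ (not just $\cB$) stabilizes the highest-weight line in $\P V$ — this is exactly proposition \ref{rep1}(c), so it is already in hand, but one should make sure the statement is being applied to $\Ba^{aff}$ and not merely to $\cB$. A secondary point to get right is the regularity argument killing $w$: one needs that $\la$ regular dominant means $\langle \la, \alpha_i^\vee\rangle > 0$ for all simple affine roots including $\a_0$, so that the stabilizer of $\la$ in $\Wa$ is trivial; this is standard for Kac--Moody groups but deserves an explicit sentence. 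Everything else is bookkeeping with locally nilpotent actions, which proposition \ref{rep1} and the integrability of $V$ make routine.
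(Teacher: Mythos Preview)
Your proposal is correct and follows essentially the same approach as the paper: Birkhoff factorization to write $\ga = u^- w b$, use that $\cB$ stabilizes the highest-weight line to drop $b$, use that $\cU^-$ only lowers weights to reduce to $w$, and then invoke regularity of $\la$ to force $w = e$. The only cosmetic difference is that the paper phrases the weight-lowering step via the projection $\pi_v(g) = \pi(g.v)$ and argues directly that $\pi_v(u^- w) \ne 0 \iff \pi_v(w) \ne 0$, whereas you phrase it as ``$(u^-)^{-1}$ preserves $\P_v$''; note that this last claim needs the general fact that $\cU^-$ never raises weights on \emph{any} vector (not just $v_\la$), which is of course true but is slightly more than what your first paragraph established.
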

\begin{proof}
Let $\pi \colon V \to \C \cdot v$ be the projection.  Let $\pi_v(g) = \pi(g.v)$.  We have $\pi_v(g) = 0$ if and only if $g.v \cap \P_v = \emptyset$.  By the Birkhoff decomposition any $g = u_-.w.b$ with $u_- \in \cU^-$, $w \in \Wa$ and $b \in \cB$.  It follows that $\pi_v(g) = 0$ if and only if $\pi_v(u_-.w) = 0$. If $v' = v_\mu + \sum_{\nu < \mu}v_\nu$ and $u \in \cU^-$ then $u.v_\mu$ still has nonzero projection to $v_\mu$ and all other weights are still less than $\mu$.  Consequently $\pi_v(u_-.w) \ne 0$ if and only if $\pi_v(w)\ne 0$ if and only if $w = 1$.  Therefore
\[
LG.v \cap \P_v = \cU^-\cB.v \cap \P_v = \cU^-.v \cong \cU^-.
\]
The second equality follows because the stabilizer of $[v]$ in $\P V$ is $\cB$.  The same argument works with $LG$ acting on $V^*$.
\end{proof}
\begin{rmk}
Of course we can also replace $LG$ with $\Gsd$ or $\Ga$ in lemma \ref{l:loopX0}.
\end{rmk}
\begin{prop}\label{p:loopX0}
There is an $\cU \times \cU^-$ equivariant isomorphism
\begin{align*}
\cU^- \times \cU \x \ol{\Tsd_{ad,0}}  \xrightarrow{a}& \Xa_0\\ (l,u,t) \mapsto&\ l\cdot t \cdot u
\end{align*}
\end{prop}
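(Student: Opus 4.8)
The plan is to transcribe the proof of the finite‑dimensional statement, Lemma~\ref{l:X0} (which follows \cite[6.1.7]{Brion}), replacing the Bruhat decomposition of $G$ by the Birkhoff factorization \eqref{Birk}; the orbit computation that carries the load has already been done in Lemma~\ref{l:loopX0}. As in the text, fix the highest weight vector $v=v_\lambda\in V$ and its dual $v^*\in V^*$. A point of $\Xa_0$ is represented by $\phi\in V(\la)\widehat\ox V(\la)^*$ whose $V_\lambda\ox V_\lambda^*$-component is nonzero, and since $\dim V_\lambda=1$ we normalize $\phi$ so that this component is $v\ox v^*$. First I would record the two inputs. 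By Proposition~\ref{p:loopT}, $\oTsad\subset\Xa_0$, and moreover for every $t\in\oTsad$ the normalized ``column'' $\rho(t)v$ equals $v$ and the normalized ``row'' $v^*\circ\rho(t)$ equals $v^*$, these being limits of the torus relations $\rho(\tau)v=\lambda(\tau)v$, $v^*\circ\rho(\tau)=\lambda(\tau)v^*$. Second, by Lemma~\ref{l:loopX0} the set $\Xa_0$ is stable under left multiplication by $\cU^-$ and right multiplication by $\cU$ (equivalently, the linear form $\phi\mapsto\langle v^*,\phi v\rangle$ is a $\cB^-\times\cB$-eigenvector, so its nonvanishing locus is $\cU^-\times\cU$-stable). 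Together these show $a(l,u,t)=l\cdot t\cdot u$ is a well‑defined, $\cU^-\times\cU$-equivariant morphism into $\Xa_0$.

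Next I would show $a$ is injective and admits a morphism as an inverse on its image. Since $\cU$ fixes $v$ and $\rho(t)v=v$ for $t\in\oTsad$, the column of $a(l,u,t)$ is $\rho(l)v$; because $\lambda$ is regular, the orbit map $\cU^-\to V$, $l\mapsto\rho(l)v$, has trivial stabilizer (look at the weight-$(\lambda-\alpha_i)$ components, which are nonzero by Proposition~\ref{rep1}(b) for every $i=0,\dots,r$), and, unipotent orbits on affine schemes being closed, it is a closed immersion, so $l=l([\phi])$ is recovered from $a(l,u,t)$ by matrix coefficients. Symmetrically, the row $v^*\circ a(l,u,t)=v^*\circ\rho(u)$ recovers $u=u([\phi])$ through the closed immersion $\cU\hookrightarrow V^*$, $u\mapsto v^*\circ\rho(u)$. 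Finally $t=l^{-1}\cdot a(l,u,t)\cdot u^{-1}$ is then a morphism. Hence $a$ is an isomorphism onto its image
\[
\im a=\{\,[\phi]\in\Xa_0:\ \phi(v)\in\cU^-.v,\ \ v^*\circ\phi\in\cU.v^*,\ \ l([\phi])^{-1}[\phi]\,u([\phi])^{-1}\in\oTsad\,\},
\]
and $\im a$ is \emph{closed} in $\Xa_0$: the first two conditions are closed because $\cU^-.v\subset V$ and $\cU.v^*\subset V^*$ are closed, and the third because $\oTsad$ is closed in $\Xa_0$ and $[\phi]\mapsto l^{-1}[\phi]u^{-1}$ is a morphism wherever the first two hold.

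It then remains to see $\im a$ is dense in $\Xa_0$, after which closed plus dense gives $\im a=\Xa_0$. For $[\rho(g)]$ in the open orbit $\Gaa$, normalized so $\langle v^*,\rho(g)v\rangle\neq 0$, the Birkhoff factorization \eqref{Birk} together with the regularity of $\lambda$ forces $g\in\cU^-\Tsad\cU$ with a unique factorization $g=l\tau u$ — this is exactly the computation in Lemma~\ref{l:loopX0}, that only the identity double coset survives — so $[\rho(g)]=a(l,u,[\tau])$; thus $\Gaa\cap\Xa_0=\im\!\big(a|_{\cU^-\times\cU\times\Tsad}\big)\subset\im a$, and $\Gaa\cap\Xa_0$ is dense in $\Xa_0$ since $\Gaa$ is dense in $\Xa$ and $\Xa_0$ is open. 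Throughout one works on a fixed level $\Xa_k=\ol{(\Ga)_k.[\mathrm{id}].(\Ga)_k^{-1}}$, where the matrix coefficients used above are among the finitely many coordinates realizing $\Xa_k$ as a scheme; passing to the limit over $k$ yields the asserted isomorphism of ind‑schemes, which is $\cU^-\times\cU$-equivariant by construction.

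The step I expect to be the real obstacle is the boundary behaviour packaged in the claim that $\cU^-.v\subset V$ and $\cU.v^*\subset V^*$ are closed with closed‑immersion orbit maps: $\cU^-$ is only pro‑unipotent and its orbit of $v$ is spread over infinitely many levels of $V$, so one needs the ind‑scheme version (or, for $\sLG$, the topological version) of the fact that unipotent orbits on affine varieties are closed, together with the control on levels that lets $l,u,t$ be extracted by finitely many regular functions on each $\Xa_k$. This is precisely where Proposition~\ref{p:loopT} (that $\oTsad$ is the full affine space $\A^{l+1}$ with fan exactly $-Al_0$) and the local nilpotence of the affine root operators on $V(\la)$ — the same phenomenon used in Lemma~\ref{l:cl.or.K-M} — do the work; the rest is formal once the argument of \cite[6.1.7]{Brion} is transcribed, and the smooth‑loop‑group variant is identical with Zariski‑closedness replaced by topological closedness.
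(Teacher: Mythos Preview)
Your argument is correct and runs parallel to the paper's, but the organization differs in one instructive way. You package the proof as ``$\im a$ is closed and dense in $\Xa_0$,'' and the closedness step forces you to worry about whether $\cU^-.v$ is closed in $\P_v$ --- the pro-unipotent analogue of ``unipotent orbits on affine varieties are closed,'' which you rightly flag as the delicate point. The paper sidesteps this by arguing \emph{directly} that every $\phi\in\Xa_0$ already satisfies $\phi(v)\in\cU^-.v$: since $\phi$ is a limit of group elements acting on $[id]$, the vector $\phi(v)$ lies in the closure of the highest-weight orbit; Lemma~\ref{l:cl.or.K-M} says that orbit is closed, so $\phi(v)\in\Gsd.v$, and then Lemma~\ref{l:loopX0} gives $\phi(v)\in\cU^-.v$. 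This produces a globally defined retraction $b\colon\Xa_0\to\cU^-\times\cU$ without ever invoking a closedness-of-orbit statement for $\cU^-$. (Of course the closedness you need also follows from these same two lemmas, since $\Gsd.v$ closed in $\P V$ and $\Gsd.v\cap\P_v=\cU^-.v$ forces $\cU^-.v$ closed in $\P_v$; so your route can be completed this way, just not via the finite-dimensional ``unipotent orbits are closed'' slogan.) The trade-off is that the paper must then identify the fibre $b^{-1}(1,1)$ with $\oTsad$, and does so by an irreducibility argument relying on $\pi_0(LG)=\pi_1(G)=1$; your closed-plus-dense argument bypasses that step entirely.
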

\begin{proof}
First note that the restriction to $\cU^- \times \cU \x \Tsad$ is just the multiplication map and this is known to be open by the Birkhoff decomposition; consequently the morphism is birational.  The next step is to construct a $\cU \times \cU^-$-equivariant map $\Xa_0 \to \cU \times \cU^-$.

Let $\phi \in \Xa_0$ then thinking of $\phi$ as an endomorphism defined away from $\ker \phi$ we see that $\phi(v)$ is defined and is in $\P_v$.  On the other hand $\phi(v)$ is in the closure of the orbit of the HW vector, but this orbit is closed by lemma \ref{l:cl.or.K-M} hence $\phi(v) \in \Gsd.v$.  So by the previous lemma $\phi(v) = l.v$ for a unique $l$. We get a map $\Xa_0 \to \cU^-$ via $\phi \mapsto l$. Similarly we can get a map $\Xa_0 \to \cU$.  Altogether we have a map 
\begin{equation*}
\Xa_0 \xrightarrow{b} \cU^- \times \cU
\end{equation*}
The composition $\cU^- \times \cU \x \oTsad \to \Xa_0 \to \cU^- \times \cU$ is given by $(l,t, u) \mapsto (l,u)$.  
 
To finish we show $\cU^- \times  \cU^- \x b^{-1}(1,1) \xrightarrow{a} \Xa_0$ is bijective and $b^{-1}(1,1)$ = $\oTsad$.  For injectivity note that as $b^{-1}(1,1)$ is a subset of $\Xa_0$ it suffices to show that if $a(l,t,u) = a(l',t',u') = x$ then $u = u', l= l'$.  This follows
 \begin{equation*}
 (l,u) = b \circ a(l,t,u) = b(x) = b \circ a(l',t',u') = (l',u').
 \end{equation*}
Now surjectivity.  Let $\phi \in \Xa_0$ and $(l,u) = b(\phi)$.  Then $t := (l^{-1},u^{-1}).\phi \in b^{-1}(1,1)$, hence $(l,t,u)$ does the job.  

It remains to show $b^{-1}(1,1) = \oTsad$.  Clearly we have $\supset$ as $b^{-1}$ is closed and contains $\Tsad$ and as $a$ is birartional it follows that they have the same dimension.  Now $\pi_0(LG) = \pi_1(G)$ = 1.  Further, the map $G \to G/Z(G) =: G_{ad}$ induces a map $LG \to LG_{ad}$; the image is the connected component of the identity, in particular it is irreducible.  It follows that $\Xa$ and $\Xa_0$ are irreducible hence so is $\Xa_0/\cU \times \cU^- \cong b^{-1}(1,1)$.  Thus it must equal $\oTsad$
\end{proof}

The following result is crucial for proving theorem \ref{thm:loopGad}(b); the argument given below was conveyed to me by Sharwan Kumar
\begin{prop}\label{p:picUm}
$Pic(\cU^-) = Pic(\Xa_0) = 0$
\end{prop}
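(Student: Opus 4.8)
The statement has two parts: $\operatorname{Pic}(\cU^-) = 0$ and $\operatorname{Pic}(\Xa_0) = 0$. Given Proposition \ref{p:loopX0}, which gives $\Xa_0 \cong \cU^- \times \cU \times \oTsad$ with $\oTsad \cong \A^{l+1}$ by Proposition \ref{p:loopT}, the second equality will follow from the first once we know $\operatorname{Pic}$ behaves well under products with $\cU^\pm$ and with affine space; so the real content is $\operatorname{Pic}(\cU^-) = 0$. The plan is to exhibit $\cU^-$ as an inverse limit (or increasing union, dualized) of finite-dimensional affine spaces via the grading on the prounipotent group $\cU^-$, and then invoke the fact that $\operatorname{Pic}$ of affine space is trivial together with a compatibility of $\operatorname{Pic}$ with the relevant limits.

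**Key steps.** First, I would recall the structure of $\cU^-$: it is the prounipotent radical of $\cB^-$, filtered by normal subgroups $\cU^-_{\ge n}$ with successive quotients isomorphic to root subgroups $\prod \mathbb{G}_a$, so that $\cU^- / \cU^-_{\ge n}$ is a finite-dimensional unipotent group, hence isomorphic as a variety to $\mathbb{A}^{N_n}$. Thus $\operatorname{Pic}(\cU^- / \cU^-_{\ge n}) = 0$ for each $n$. Second, I would argue that any line bundle on $\cU^-$ is pulled back from some finite level $\cU^-/\cU^-_{\ge n}$ — this is where one uses that $\cU^-$ is the inverse limit of these quotients along smooth affine-space bundle projections, and that line bundles (or more precisely their isomorphism classes) are detected at finite level; alternatively, realize $\cU^-$ directly as $\operatorname{Spec}$ of a polynomial ring in countably many variables (a filtered union of polynomial subrings), whose Picard group vanishes since $\operatorname{Pic}$ commutes with filtered colimits of rings and each $\operatorname{Pic}(k[x_1,\dots,x_m]) = 0$. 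Either way one concludes $\operatorname{Pic}(\cU^-) = 0$, and symmetrically $\operatorname{Pic}(\cU) = 0$. Third, for $\Xa_0$: using $\Xa_0 \cong \cU^- \times \cU \times \A^{l+1}$, and the fact that for $Y$ with $\operatorname{Pic}(Y) = 0$ one has $\operatorname{Pic}(Y \times \A^m) = 0$ and (with a units argument or the polynomial-ring description) $\operatorname{Pic}(\cU^- \times \cU) = 0$, we get $\operatorname{Pic}(\Xa_0) = 0$.

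**Main obstacle.** The subtle point is the passage to the limit: $\cU^-$ is an infinite-dimensional (pro-)scheme, so I must be careful about which category I work in and justify that $\operatorname{Pic}$ vanishes there — naively "$\operatorname{Pic}$ of a limit is the limit of $\operatorname{Pic}$" can fail, and there could in principle be a $\varprojlim^1$ obstruction or line bundles not arising at finite level. The cleanest fix is probably to describe the coordinate ring of $\cU^-$ explicitly as a polynomial ring $k[x_1, x_2, \dots]$ in countably many variables (a directed union of finitely-generated polynomial rings, with flat transition maps) and use that $\operatorname{Pic}$ commutes with filtered colimits of commutative rings (each term contributing $0$), bypassing any $\varprojlim^1$ issue entirely. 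A secondary, more bookkeeping-level obstacle is making the product formula $\operatorname{Pic}(\cU^- \times \cU \times \A^{l+1}) = 0$ rigorous in the ind-scheme setting, but this again reduces to the polynomial-ring statement for the open cell $\Xa_0$, whose coordinate ring is likewise a polynomial ring in countably many variables.
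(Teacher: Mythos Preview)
Your plan rests on a structural claim about $\cU^-$ that is not correct in this setting. You write that $\cU^-$ is the \emph{prounipotent} radical of $\cB^-$, filtered by normal subgroups with finite-dimensional unipotent quotients, and hence that its coordinate ring is a polynomial ring in countably many variables. That description applies to $\cU$, not to $\cU^-$. Recall from the definitions in \S\ref{ss:loopdef} that $\cU$ sits inside $L^+G=G(R[[z]])$ (power series), whereas $\cU^-$ sits inside $L^-G=G(R[z^{-1}])$ (polynomials). Thus $\cU$ is a pro-group, an inverse limit of finite-dimensional unipotent groups, and is indeed an infinite-dimensional affine space; the paper uses exactly this to dismiss $\operatorname{Pic}(\oTsad\,\cU)=0$ in one line. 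By contrast $\cU^-$ is an \emph{ind}-group: it is a rising union of finite-dimensional closed subvarieties (the paper uses the pieces $\cU^-_w=\cU^-\cap \cB w\cB/\cB$), and these pieces are \emph{not} affine spaces in general. This is precisely the asymmetry flagged in Remark~\ref{thickthin}. In particular $\cU^-$ is not $\operatorname{Spec}$ of a polynomial ring, and the ``pass to a finite level where it is $\A^N$'' argument is not available.

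Because of this, the substantive content of the proposition is exactly the part your plan tries to make trivial. The paper's argument for $\operatorname{Pic}(\cU^-_w)=0$ proceeds in two steps that you would need to replace: first, contractibility of each $\cU^-_w$ (Kumar) together with the comparison $H^\ast_{\mathrm{et}}(-,\Z/k)\cong H^\ast_{\mathrm{sing}}(-,\Z/k)$ and the Kummer sequence forces multiplication by $k$ on $\operatorname{Pic}(\cU^-_w)$ to be an isomorphism for every $k$; second, $\operatorname{Pic}(\cU^-_w)$ embeds in a Chow group that surjects from the Chow group of the ambient Schubert variety, which by the Bruhat cell decomposition is finitely generated. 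A finitely generated abelian group on which every $k$ acts bijectively is zero. None of these inputs --- contractibility of Schubert pieces, the \'etale comparison, or the Chow/homology bound --- is captured by the polynomial-ring picture. If you want to salvage your approach, you would need to exhibit an ind-presentation of $\cU^-$ by genuine affine spaces with closed embeddings as transition maps; I do not know of one, and the paper does not claim one exists.
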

\begin{proof}
$Pic(\Xa_0) = Pic(\cU^-)\x Pic(\oTsad \cU)$. The second factor is $0$ because $\oTsad \cU$ is an infinite dimensional affine space.  So it remains to prove $Pic(\cU^-)=0$. 

For any $w \in \Wa$ we have a Schubert variety $\cB w \cB/\cB \subset LG/\cB$; set $\cU^-_w = \cU^- \cap \cB w \cB/\cB $. In fact $\cU^-\subset LG/\cB$ and we get an ind-structure on $\cU^- = \bigcup_n \cU^-_n$ where $\cU^-_n = \bigcup_{l(w)\le n}\cU^-_w$. We show $Pic(\cU_w) = 0$ for all $w$.

Fix $w$ and abbreviate $Y = \cU_w^-$. For any $k\in \mathbb{N}$ we have a short exact sequence $\Z/k \to \cO_Y^* \xrightarrow{f \mapsto f^k} \cO_Y^*$; using that $H^1_{et}(Y,\cO_Y^*) \cong Pic(Y)$ and looking at the long exact sequence in \'etale cohomology we get
\[
\dotsb \to  H^1_{et}(Y,\Z/k)\to Pic(Y) \to Pic(Y) \to H^2_{et}(Y,\Z/k) \to  \dotsb
\]
By the proof of \cite[7.4.17]{Kumar}, $Y$ is contractible and because $H^*_{et}( - ,\Z/k) = H^*_{singular}( - ,\Z/k)$ it follows that the outer terms vanish and $Pic(Y) \xrightarrow{L \mapsto L^{\ox k}}Pic(Y)$ is an isomorphism for any $k$. We now show $Pic(Y)$ is finitely generated and together with the previous statement it will follow that $Pic(Y)=0$.

$Y$ is a normal variety with $\dim Y = l(w)$ so by \cite[2.1.1]{Fulton} $Pic(Y)$ embeds in the Chow group $Pic(Y) \subset A_{l(w) -1}(Y)$. So reduce to showing $A_{l(w) -1}(Y)$ is finitely generated. By \cite[1.8]{Fulton} there is a surjection $A_{l(w) -1}(\cB w \cB/\cB)\to A_{l(w) -1}(Y)$. By \cite[19.1.11b]{Fulton} $A_{l(w) -1}(Y) = H_{2(l(w) -1)}(\cB w \cB/\cB,\Z)$ and finally the Bruhat decomposition implies the latter group is finitely generated.
\end{proof}

We know present two technical results that are in the proof of the main theorem.  Proposition \ref{p:Gorb} is well known in the finite dimensional case and the same is likely true for proposition \ref{c:Borb} but I couldn't find a reference.
\begin{prop}\label{p:Gorb}
The boundary of $\Xa$ is $\Gsd \x \Gsd$ stable and so breaks up into a disjoint union of orbits.  Proposition \ref{p:Gorb} implies the standard idempotents $e_I = \sum_{i \in I} e_i \in \oTsad \cong \A^{r+1}$ for $I \subset \{0, \dotsc, r\}$ lie in distinct $\Gsd \x \Gsd$ orbits.

For $J \subset [0,r]$ let $P_J^\pm$ be the associated opposite parabolics with standard Levi decomposition $P^\pm_J = L_J \cdot U_J^\pm$. Set $S(J) = \{(g_1,g_2) \in P_J \x P^-_J : \ \  g_1|_{L_J} = g_2|_{L_J}\} = \Delta(L_J) \ltimes (U_J \x U^-_J)$. Set $T(J):=Z(L_J) \x Z(L_J)$.  Then for $e_J = \sum_{j \not \in J} e_j \in \oTsad$ we have $Stab(e_J) = T(J) \cdot S(J)$ and
\[
\Xa = \bigsqcup_{J \subset [0,r]} Orbit(e_J) =  \bigsqcup_{J \subset [0,r]}  \Gsd \x \Gsd \big / T(J) \cdot S(J)
\]
Equivalently $Orbit(e_J)$ is an $L_{J,ad}$-bundle over $\Gsd/P_J \x \Gsd/P_J^-$ and $\ol{Orbit(e_J)}$ is an $\ol{L_{J,ad}}$-bundle over $\Gsd/P_J \x \Gsd/P_J^-$ where $\ol{L_{J,ad}}$ is the wonderful compactification of $L_{J,ad}$.\\

\end{prop}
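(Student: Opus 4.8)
The plan is to transport the classical computation of the $G\times G$-orbits and their stabilizers on the wonderful compactification of a semisimple adjoint group (see \cite[\S6]{DC&P} or \cite[6.1--6.2]{Brion}) to the loop setting, systematically replacing the Bruhat decomposition by the Birkhoff factorization \eqref{Birk} of $\Gsd$ and feeding in the structural results already proved for $\Xa$: Proposition \ref{p:loopX0} ($\Xa_0\cong\cU^-\times\cU\times\oTsad$), Proposition \ref{p:loopT} ($\oTsad\cong\A^{r+1}$ as a $\Tsad$-toric variety, the torus acting through $-\alpha_0,\dots,-\alpha_r$), and Lemma \ref{l:cl.or.K-M}. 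Since $Z(\Ga)$ acts trivially on $\P(V\widehat{\ox}V^*)$ (Proposition \ref{rep1}(e)), the $\Ga\times\Ga$-action on $\Xa$ descends to $\Gaa=\Gsd/Z(G)$; as $\Gaa$ is the dense open orbit, its complement $\Xa-\Gaa$ is $\Gsd\times\Gsd$-stable, which is the first assertion.

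The next step shows that every $\Gsd\times\Gsd$-orbit meets $\oTsad$. The key point, proved exactly as \eqref{eq:X0covers}, is that $\Gsd\times\Gsd.\Xa_0=\Xa$: the complement is a closed $\Gsd\times\Gsd$-stable subset, and if it were nonempty, then applying the limit construction from the proof of Lemma \ref{l:cl.or.K-M} to the left and right tensor factors (after an initial generic one-parameter limit to reduce to finite support) would place $[v_\la\ox v_\la^*]$ in it — but that point is the image of $0\in\oTsad\subset\Xa_0$, a contradiction. Using Proposition \ref{p:loopX0} to absorb $\cU$ and $\cU^-$ into $\Gsd\times\Gsd$, it follows that every orbit meets $\oTsad$; and since $\oTsad\cong\A^{r+1}$ is the standard $\Tsad$-toric variety (the $-\alpha_i$ form a $\Z$-basis of $X^*(\Tsad)=\Z\oplus Q$, with $Q$ the root lattice of $G$), its $\Tsad$-orbits are the coordinate strata $O_J=\{x:\;x_j=0\iff j\in J\}=\Tsad.e_J$, $J\subseteq[0,r]$. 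Hence $\Xa=\bigcup_{J}\Gsd\times\Gsd.e_J$.

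The heart of the proof is the stabilizer computation. I would realize $e_J$ concretely in $\P(V\widehat{\ox}V^*)$ as the projection of $V=V(\la)$ onto the $L_J$-submodule $V^J:=\bigoplus\{V_\mu:\;\la-\mu\in\Z_{\ge0}\langle\alpha_j:j\in J\rangle\}$ generated by $v_\la$ — this is $\lim_{s\to0}[\eta(s).\mathrm{id}]$ along a one-parameter subgroup of $\Tsad$ landing in $O_J$, using that every weight $\mu$ of $V$ satisfies $\la-\mu\in\Z_{\ge0}\langle\alpha_0,\dots,\alpha_r\rangle$ (Proposition \ref{rep1}(a)); for $J\subsetneq[0,r]$ the Levi $L_J$ is finite-dimensional reductive and $V^J$ is finite-dimensional, while $J=[0,r]$ returns the open orbit. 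Now $(g,h)\in\mathrm{Stab}(e_J)$ iff $g\,e_J\,h^{-1}\in\C^* e_J$, which forces $h(\ker e_J)=\ker e_J$ and $g(V^J)=V^J$; invoking the Birkhoff factorization \eqref{Birk} for $g$ and its opposite for $h$, one argues exactly as in the finite-dimensional case that $g\in P_J$, $h\in P_J^-$, and that the remaining scalar condition says the Levi parts of $g$ and $h$ agree in $L_{J,ad}$. This gives $\mathrm{Stab}(e_J)=T(J)\cdot S(J)$ with $S(J)=\Delta(L_J)\ltimes(U_J\times U_J^-)$ and $T(J)=Z(L_J)\times Z(L_J)$. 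The orbits are pairwise distinct because the unipotent radical of $\mathrm{Stab}(e_J)$ is $U_J\times U_J^-$ and the $P_J$ are pairwise non-conjugate standard parabolics, so $\Xa=\bigsqcup_J\Gsd\times\Gsd/T(J)S(J)$. Finally, the projection $\Gsd\times\Gsd/T(J)S(J)\to\Gsd/P_J\times\Gsd/P_J^-$ has fibre $(P_J\times P_J^-)/T(J)S(J)=(L_J\times L_J)/(Z(L_J)\times Z(L_J))\Delta(L_J)=L_{J,ad}$, exhibiting $\mathrm{Orbit}(e_J)$ as an $L_{J,ad}$-bundle; the closure statement follows by comparing the natural $\Gsd\times\Gsd$-equivariant map $\Gsd\times\Gsd\times_{P_J\times P_J^-}\overline{L_{J,ad}}\to\Xa$ — proper, since the partial flag ind-varieties are ind-proper and $\overline{L_{J,ad}}$ (the embedding of Section \ref{sec:stacky}, with the central torus of $L_J$ left uncompactified) is proper — with the closure of $\mathrm{Orbit}(e_J)$.

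The main obstacle I anticipate is the stabilizer computation of the third paragraph: one must verify that the Birkhoff factorization really supports the manipulations that the Bruhat decomposition carries out in finite dimensions, in particular that the subgroup of $\Gsd$ stabilizing $V^J\subset V(\la)$ is precisely $P_J$ while the subgroup stabilizing $\ker e_J$ is precisely $P_J^-$ — here the thin/thick asymmetry of the flag varieties (Remark \ref{thickthin}) must be respected. A secondary technical point is checking that the equivariant map $\Gsd\times\Gsd\times_{P_J\times P_J^-}\overline{L_{J,ad}}\to\Xa$ is a closed immersion onto $\overline{\mathrm{Orbit}(e_J)}$ rather than merely surjective onto it.
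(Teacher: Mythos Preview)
Your overall strategy is sound and lands on the right answer, but it diverges from the paper's proof in the stabilizer step and contains one genuine error in the closure step.

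\textbf{Stabilizer computation.} You argue by identifying $e_J$ with the projection onto the $L_J$-submodule $V^J$, then deducing that any $(g,h)$ in the stabilizer must preserve $\operatorname{im}e_J$ and $\ker e_J$, hence lie in $P_J\times P_J^-$, with Levi parts agreeing in $L_{J,ad}$. This is the classical De Concini--Procesi argument and it works, provided one knows that the stabilizer in $\Gsd$ of the subspace $V^J\subset V(\la)$ is exactly $P_J$ (and dually for $\ker e_J$ and $P_J^-$); this is a standard Kac--Moody fact (cf.\ \cite[7.1]{Kumar}), so your flagged ``main obstacle'' is not a real problem. The paper takes a different route: it checks directly, root subgroup by root subgroup, that $T(J)S(J)$ fixes $[e_J]$ (the key computation being $X_\a.e_j=0$ whenever $\a$ involves some $\a_i$ with $i\notin J$, by a weight argument), and then concludes equality by a codimension count. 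Your approach is cleaner conceptually; the paper's is more self-contained and avoids invoking the parabolic--stabilizer correspondence. Distinctness of the orbits is handled in the paper (in the proof of Theorem~\ref{thm:loopGad}(c)) exactly as you suggest, via non-conjugacy of standard parabolics in a Tits system.

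\textbf{Closure/bundle statement.} Here you have an actual error. You claim the map $\Gsd\times\Gsd\times_{P_J\times P_J^-}\ol{L_{J,ad}}\to\Xa$ is proper because ``the partial flag ind-varieties are ind-proper.'' For $\Gsd/P_J$ (thin side, $P_J\supset\cB$) this is true, but $\Gsd/P_J^-$ is a \emph{thick} flag variety (Remark~\ref{thickthin}) and is not ind-proper for the formal loop group; your properness argument does not go through as stated. The paper sidesteps this entirely: it observes that $e_J$ is literally the identity endomorphism of the $L_J$-submodule $V^J$, which is a regular highest-weight representation of $L_{J,ad}$, so the $L_J\times L_J$-closure of $[e_J]$ inside the fibre over the basepoint is \emph{by construction} the wonderful compactification $\ol{L_{J,ad}}$; equivariance then propagates this over all of $\Gsd/P_J\times\Gsd/P_J^-$. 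You can repair your argument along the same lines---work fibrewise and use the identification of $e_J$ with $\mathrm{id}_{V^J}$ rather than appealing to properness of the base.
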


\begin{cor}\label{c:Borb}
We can further decompose $Orbit(e_J)$ under $\cB^- \times \cB
\subset \Gsd \times \Gsd$:
\[ Orbit(e_J) = \bigsqcup_{( w_1, w_2) \in \Wa / W_J \times W_J \backslash
   \Wa} \bigsqcup_{w_3 \in W_J} \cB^- w_1 .e_J.w_3 w_2 \cB \]
where $W_J$ is the Weyl group of $L_J$. 
\end{cor}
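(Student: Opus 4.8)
The plan is to reduce Corollary \ref{c:Borb} to the previous structural results: Proposition \ref{p:Gorb} says $Orbit(e_J)$ is a (trivial-on-each-factor) $L_{J,ad}$-bundle over $\Gsd/P_J \times \Gsd/P_J^-$, i.e.
\[
Orbit(e_J) \cong \Gsd \times_{P_J} L_{J,ad} \times_{P_J^-} \Gsd,
\]
where the middle factor records the Levi component of the gluing. So I would first decompose the two flag-variety factors $\Gsd/P_J$ and $\Gsd/P_J^-$ under the left action of $\cB^-$ and the right action of $\cB$ respectively. For the thin partial flag variety $\Gsd/P_J$, the Birkhoff factorization \eqref{Birk} for $\Ga$ (which restricts to $\Gsd$) gives $\Gsd = \bigsqcup_{w \in \Wa} \cU^- w \Ba$, and collapsing by $P_J$ on the right yields $\Gsd/P_J = \bigsqcup_{w_1 \in \Wa/W_J} \cB^- w_1 P_J/P_J$; symmetrically $P_J^-\backslash \Gsd = \bigsqcup_{w_2 \in W_J\backslash \Wa} P_J^- w_2 \cB/\cB$. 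Here I use that $\cB^-$ and $\cU^-$ differ by the torus $\Tsd$, which is absorbed into the parabolic, so the Birkhoff cells descend to $\cB^-$-orbits on the partial flag variety indexed by the coset space $\Wa/W_J$.

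Next I would handle the middle factor: the residual gluing datum lives in $L_{J,ad}$, and the relevant group acting on it is the Levi $L_J$ (the part of $\cB^-\times\cB$ that survives after we have used up the $\cU^\pm$ and coset representatives is a Borel of $L_J$ acting on each side). Since $L_J$ is a finite-dimensional reductive group, its wonderful-type open cell $Orbit(e_J)$-fiber decomposes under its own Borel by the ordinary Bruhat decomposition of $L_J$, contributing the index $w_3 \in W_J$. Concretely, I would invoke Lemma \ref{l:loopX0}/Proposition \ref{p:loopX0} applied to the Levi (or just the classical Bruhat decomposition, since $L_J$ is reductive) to write the $L_{J,ad}$ fiber over a point as $\bigsqcup_{w_3 \in W_J} \cU_{L_J}^- w_3 \cU_{L_J}$-type cells, and then note that conjugating by the coset representatives $w_1, w_2$ and recombining with the flag-variety decompositions assembles everything into the stated triple union
\[
Orbit(e_J) = \bigsqcup_{(w_1,w_2)\in \Wa/W_J \times W_J\backslash\Wa}\ \bigsqcup_{w_3 \in W_J} \cB^- w_1.e_J.w_3 w_2 \cB.
\]
The bookkeeping point is that $e_J$ is fixed by $\Delta(L_J)\ltimes(U_J\times U_J^-)$ and has a transitive $L_J\times L_J$-action on its fiber with stabilizer $T(J)\cdot S(J)$, so $w_3$ ranging over $W_J$ exactly parametrizes the $B_{L_J}^-\times B_{L_J}$-orbits there.

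The main obstacle I anticipate is not any single hard step but the careful bookkeeping of how the decompositions of the three factors interact — in particular verifying that the three unions are genuinely disjoint (no double-counting across different $(w_1,w_2,w_3)$) and that every $\cB^-\times\cB$-orbit in $Orbit(e_J)$ is hit. Disjointness should follow from the disjointness in the Birkhoff factorization for the two flag factors plus disjointness of Bruhat cells in $L_J$, but one has to check there is no collapse when passing from $\Gsd$ to $\Gsd/P_J$ (this is where $\Wa/W_J$ rather than $\Wa$ appears) and that the coset representatives $w_1, w_2$ can be chosen compatibly with the Levi so that $w_1.e_J.w_3w_2$ is a well-defined point independent of the choices modulo the stabilizer. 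I would also want to double-check the one subtlety peculiar to the loop setting: that $\cB^-$ (thick) acting on the thin flag variety $\Gsd/P_J$ still gives locally closed finite-codimension Birkhoff cells indexed by $\Wa/W_J$, which is exactly the content of \eqref{Birk} together with the discussion in Remark \ref{thickthin}; granting that, the rest is formally identical to the finite-dimensional case.
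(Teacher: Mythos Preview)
Your approach is essentially the same as the paper's: decompose the base $\Gsd/P_J \times \Gsd/P_J^-$ via Birkhoff into cells indexed by $\Wa/W_J \times W_J\backslash\Wa$, then decompose each $L_{J,ad}$-fiber via Bruhat for the Levi, indexed by $W_J$. The paper carries this out by direct manipulation of elements rather than by invoking the bundle picture, but the content is identical.

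However, you underestimate the one genuine technical step by calling it ``bookkeeping.'' The issue is not whether $w_1.e_J.w_3w_2$ is well-defined, but whether the pieces recombine into $\cB^- \times \cB$-orbits of the asserted form. Concretely: after you place the fiber element in a Bruhat cell $\cU_{L_J}^- w_3 \cB_{L_J}$, you must absorb $\cU_{L_J}^-$ to the left past $w_1$ into $\cB^-$, and $\cU_{L_J}$ to the right past $w_2$ into $\cB$. This requires $w_1 \cU_{L_J}^- w_1^{-1} \subset \cU^-$ and $w_2^{-1} \cU_{L_J} w_2 \subset \cU$, which holds precisely when $w_1$ is the \emph{minimal length} representative of its coset in $\Wa/W_J$ (and symmetrically for $w_2$). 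The paper isolates exactly this point and proves it via a short Weyl-group argument: for a simple root $\alpha_i$ of $L_J$, $w_1(-\alpha_i)$ is negative iff $w_1$ admits a reduced expression not ending in $s_i$, established using the exchange property. Your plan would go through once you supply this lemma; without it the ``recombining'' step is an assertion, not an argument.
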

Note when $J = \emptyset$ we get $P_J^{\pm} = L_J = \Gsd$
and $W_J = \Wa$ and the disjoint union becomes the usual Birkhoff decomposition. When $J = \{ \alpha_1, \ldots, \alpha_r \}$ then the parabolics are the Borels
and $W_J = 1$ and the disjoint union becomes the stratification by Shubert
cells of $\Gsd/\cB \times \cB^- \backslash \Gsd$.

We give the proofs of these results at the end of this sub section.

\begin{thm}\label{thm:loopGad}
Let $\Xa  = \ol{\Ga/Z(\Ga)}$ be as in \eqref{Xak}.  Then 
\begin{itemize}
\item[(a)] $\Xa$ is independent of $\lambda$. 
\item[(b)] $\Xa - \Xa_0$ is of pure codimension $1$.  It is a union of $r+1$ divisors that are Cartier and freely generate the Picard group.
\item[(c)] The boundary $\Xa - \Gsd/Z(G)$ consists of $r+1$ normal crossing divisors $D_0,\dotsc,D_r$ and the closure of the $\Ga \x \Ga$-orbits are in bijective correspondence with subsets $I \subset [0,r]$ in such a way that to $I$ we associate $\cap_{i \in I} D_i$.
\item[(d)] Any $G$ equivariant $X' \to \Xa$ determine and is determined by a weyl equivariant morphism of toric varieties $\overline{T}' \to \ol{\Tsd_{ad,0}}$.
\end{itemize}
\end{thm}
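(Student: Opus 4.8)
The plan is to follow the proof of Theorem \ref{thm:G_ad} essentially line by line, substituting the loop-group analogues of the tools used there: Proposition \ref{p:loopT} (the torus closure $\oTsad\cong\A^{r+1}$, with fan the Weyl-alcove decomposition of $\mft^\ltimes_\R$), Proposition \ref{p:loopX0} (the open cell $\Xa_0\cong\cU^-\times\cU\times\oTsad$), Proposition \ref{p:picUm} ($Pic(\cU^-)=Pic(\Xa_0)=0$), Lemma \ref{l:cl.or.K-M} (uniqueness of the closed orbit), and Proposition \ref{p:Gorb} together with Corollary \ref{c:Borb} (the orbit stratification of the boundary). The first step is the analogue of \eqref{eq:X0covers}, namely $\Ga\times\Ga.\Xa_0=\Xa$: by Lemma \ref{l:cl.or.K-M} the unique closed orbit in $\P(V(\la)\widehat{\ox}V(\la)^*)$ is that of $v_\la\ox v_\la^*$, it visibly meets $\Xa_0$, and so the closed $\Ga\times\Ga$-stable complement $\Xa-\Ga\times\Ga.\Xa_0$ must be empty; one runs this argument level by level on the $\Xa_k$. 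In particular $\Xa$ is covered by the translated charts $g.\Xa_0.h\cong\cU^-\times\cU\times\A^{r+1}$.

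For (a), given regular dominant weights $\la,\mu$ of $\Ga$, form the closure $\Xa_\Delta$ of the diagonal copy of $\Gaa$ in $\Xa_\la\times\Xa_\mu$ with projection $p_\la\colon\Xa_\Delta\to\Xa_\la$; Propositions \ref{p:loopT} and \ref{p:loopX0} show $p_\la$ restricts to an isomorphism $p_\la^{-1}(\Xa_{\la,0})\cong\cU^-\times\oTsad\times\cU\isomto\Xa_{\la,0}$, and since the $\Ga\times\Ga$-translates of the open cell cover both sides, $p_\la$ is a $\Ga\times\Ga$-equivariant isomorphism; the same for $p_\mu$, so $\Xa_\la\cong\Xa_\mu$. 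For (b), $\Xa_0\cong\cU^-\times\cU\times\A^{r+1}$ has trivial Picard group by Proposition \ref{p:picUm}, so $Pic(\Xa)$ is generated by the components of $\Xa-\Xa_0$; purity of codimension $1$ reduces, via the chart $\cU^-\times\cU\times\A^{r+1}$ and its translates, to the fact that the complement of the big torus in $\A^{r+1}$ is the union of the $r+1$ coordinate hyperplanes $\{t^{-\a_i}=0\}$, whose translates are the Cartier divisors $D_0,\dotsc,D_r$. Finally any relation among the $[D_i]$ is a principal divisor invertible on $\Xa_0$, hence (every invertible function on the affine-type scheme $\Xa_0$ being constant) trivial, so the $D_i$ freely generate $Pic(\Xa)$.

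For (c), Proposition \ref{p:Gorb} identifies the $\Ga\times\Ga$-orbits of $\Xa$ with the idempotents $e_J$, $J\subset[0,r]$, and $\ol{Orbit(e_J)}$ with an $\ol{L_{J,ad}}$-bundle over $\Gsd/P_J\times\Gsd/P_J^-$; under the appropriate matching $J\leftrightarrow I$ this gives $\ol{Orbit(e_I)}=\cap_{i\in I}D_i$. The simple-normal-crossing property is local, and by Proposition \ref{p:loopX0} reduces to the toric chart $\A^{r+1}$, where the $D_i$ are the coordinate hyperplanes and $\cap_{i\in I}D_i$ the evident coordinate subspace (Corollary \ref{c:Borb} gives the finer $\cB^-\times\cB$ stratification but is not needed here). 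For (d), a $\Ga\times\Ga$-equivariant $X'\to\Xa$ restricts on open cells, and equivariance plus Proposition \ref{p:loopX0} forces $X'_0\cong\cU^-\times\cU\times\ol{T}'_0$ with the map of the form $\mathrm{id}\times\mathrm{id}\times f_0$ for a toric morphism $f_0\colon\ol{T}'_0\to\oTsad$; spreading $f_0$ out over $\Ga\times\Ga$-translates and using $\Wa$-equivariance of $\ol{\Tsad}$ (Proposition \ref{p:loopT}) produces the Weyl-equivariant toric morphism $\ol{T}'\to\ol{\Tsad}$, equivalently a fan map supported in $-Al_0$. Conversely, from such a morphism one reconstructs $X'$ by imitating Lemma \ref{l:preimageX0} in the Kac--Moody setting: choose generators $\mu_1,\dotsc,\mu_m$ of the relevant monoid in $\ch{\Ta}$, a regular dominant $\la$ with all $\la+\mu_i$ regular dominant, form $\ol{\Ga\times\Ga.[id]}$ inside $\P\big(\bigoplus_iV(\la+\mu_i)\widehat{\ox}V(\la+\mu_i)^*\big)$, and take $X'$ to be the preimage of $\Xa$ under the rational projection, the open-cell computation going through by Proposition \ref{rep1}(b).

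The \emph{main obstacle} is this converse direction. One must make the closure-and-projection construction rigorous for the ind-schemes involved (controlling closures in $\P$ of uncountably-infinite-dimensional spaces in the sense defined before Lemma \ref{l:cl.or.K-M}), check the rational maps are regular exactly on the expected loci, and—when $\ol{T}'$ needs several cones—verify the cocycle condition of Lemma \ref{l:cocycle}; all of this is complicated by the infinite-dimensionality of $V(\la)$ and by the fact that the base toric variety $\ol{\Tsad}$ has for its fan the full (infinite) Weyl-alcove decomposition of $\mft^\ltimes_\R$ rather than a single strongly convex rational cone, so that every occurrence of ``negative Weyl chamber'' in the finite-dimensional arguments must be replaced by ``negative Weyl alcove $-Al_0$''.
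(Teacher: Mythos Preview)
Your treatment of (a) and (c) tracks the paper closely. There is, however, a genuine gap in (b), and you have also missed a small but essential point in (d).

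For (b), you have conflated two different families of divisors. The coordinate hyperplanes $\{t^{-\a_i}=0\}$ in $\oTsad\cong\A^{r+1}$ lie \emph{inside} $\Xa_0$; their $\Ga\times\Ga$-saturates are the boundary divisors $D_i$ of part (c), i.e.\ the components of $\Xa-\Gsd/Z(G)$. The components of $\Xa-\Xa_0$ are instead the closures $\overline{E_i}$ of the Schubert-type divisors $E_i=\overline{\cU^- s_i\cB}\subset\Gsd/Z(G)$, and these do not meet $\Xa_0$ at all, so your ``reduce to the coordinate hyperplanes in the chart'' argument does not see them. The paper obtains purity level by level from the general fact that the complement of a dense open affine has pure codimension one (and gives a second argument via Corollary \ref{c:Borb}). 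More seriously, the Cartier assertion requires real work in this setting: the paper realises each $E_i$ as the zero locus of a section $\sigma_i(g)=\l v_i,g.v_i\r$ of the pullback of $\mathcal{O}(1)$ from $\P V_i$ (the $i$th fundamental representation), and then uses the $\cB^-\times\cB$ decomposition of Corollary \ref{c:Borb} to show that $\sigma_i$ extends over every point of $\Xa$ by writing such a point as a limit $u\nu_1\eta(s)\nu_2 b$ and checking the relevant matrix-coefficient limits exist and are independent of choices. You explicitly dismiss Corollary \ref{c:Borb} as ``not needed here'', but it is precisely the tool the paper uses for this step; in the ind-scheme setting there is no free passage from formal smoothness of the translated charts to ``Weil $=$ Cartier''.

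For (d), the paper flags one adjustment you omit: when imitating Lemma \ref{l:preimageX0} one must work with several highest-weight representations $V(\la+\mu_i)$ simultaneously, and for $\Ga$ these must all have the \emph{same level}. The paper observes that for any finite collection of dominant weights one can choose a common level $l$ making each $(0,\la+\mu_i,l)$ regular dominant, and this is the only new wrinkle in the converse direction---not the ind-scheme closure and cocycle issues you single out as the main obstacle.
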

\begin{proof}
The proof of (a) is the same as \ref{thm:G_ad}(a).  For (b) we use \cite[2.4]{Vakil} which states that the complement of a dense open subset in any scheme
is of pure codimension one.  For every $n$, $\Xa_{n,0}$ is a dense affine scheme in $\Xa_n$ hence $\Xa_n - \Xa_{n,0}$ gives a sequence of compatible divisors and shows that $\Xa - \Xa_0$ itself is pure codimension $1$.  

Alternatively, by corollary \ref{c:Borb}, any $p \in \Xa - \Gsd/Z(G)$ can be expressed as $u^- w_1 \eta(0)w_2 b$ where $\eta\in -Al_0$ is a one parameter subgroup such that $\eta(0) = e_J$ for some $J$.  Clearly $p \in \ol{\cU^- w_1 \eta(s) w_2 \cB} = \ol{\cU^- w_1w_2 \cB}$. Using 
\[
E_i := \ol{\cU^- s_i \cB} = \bigsqcup_{v \ge s_i} \cU^- v \cB \ \ \ (\mbox{ closure in } \Ga)
\]
where $\ge$ is the Bruhat order on $\Wa$, see \cite[1.3.15]{Kumar}. Note that $E_i$ are codimension $1$ in $\Gsd/Z(G)$ and for some $i$ we have $p\in \ol{\cU^- w_1w_2 \cB} \subset \ol{E_i}$ (closure in $\Xa$).  It follows that $\Xa - \Gsd/Z(G)$ is the union of the $\ol{E_i}$ which is pure codimension $1$.

To show the $\ol{E_i}$ are Cartier we use that the $E_i$ are Cartier.  In fact there is a maximal parabolic subgroup $\mathcal{P}_i = L_i U_i$ and a line bundle $\mc{O}_i(1)$ on $\Ga/\mathcal{P}_i$ such that $E_i$ is the inverse image of the vanishing of a section $\sigma_i$ on $\Ga/\mathcal{P}_i$.  Moreover choosing a HWR $V_i$ such that $\mathcal{P}_i$ stabilizes the class of a HW vector $v_i$ then there is a morphism
\[
\Ga \to \Ga/\mathcal{P}_i \to \P V_i \ \ \ \ \ \ g \mapsto g\mathcal{P}_i \mapsto [g.v_i]
\]
and $\cO_i(1)$ is the pull back of $\cO(1)$ on $\P V_i$ and further the section $\sigma_i$ can be described as $\sigma_i(g) = \l v_i, g.v_i \r$; that is, the projection of $g.v_i$ to the $v_i$ weight space. We show that this section extends to all of $\Xa$.  Consider $\Xa \subset \P \big(V(\la)\widehat{\ox} V(\la)^* \big)$ and for $w_1,w_2 \in \Wa$ let $\Xa_{w_1,w_2}$ be the open subset where projection to $V_{w_1.\la}\ox V_{w_2.\la}$ is nonzero.  Then \ref{c:Borb} shows $\Xa$ is covered by $\Xa_{w_1,w_2}$.  

We extend $\sigma_i$ on each $\Xa_{w_1,w_2}$.  Namely, for $p$ in the boundary of $\Xa_{w_1,w_2}$ use \ref{c:Borb} again to write $p = \lim_{s\to 0} u \nu_1 \eta(s) \nu_2 b = \lim_{s\to 0} p(s)$. By definition of $\Xa_{w_1,w_2}$ we have $\lim_{s\to 0} \frac{\l \nu_1.v_\la, p(s) \nu_2.v_\la \r}{\l w_1.v_\la, p(s) w_2.v_\la \r}$ exists for any $\nu_1,\nu_2 \in \Wa$; indeed thinking of $p = p(0)$ as an infinite matrix then the limit in question is the entry of $p(0)$ in ``row'' $\nu_1.v_\la$ and ``column'' $\nu_2.v_\la$. Similarly, replacing $v_\la$ with $v_i$, we conclude the limit also exists.  We set $\sigma_i(p) = \lim_{s\to 0} \frac{\l \nu_1.v_\la, p(s) \nu_2.v_\la \r}{\l w_1.v_\la, p(s) w_2.v_\la \r}$.  

It remains to show the extension doesn't depend on the choice $p(s)$.  The numerator  reduces to
\begin{align*}
\sigma_i(p(s)) &=   \l v_i, \ \ u \nu_1 \eta(s) \nu_2 b.v_i \r\\
 &= \l \nu_1^{-1} u^{-1}.v_i, \ \ \eta(s) \nu_2.v_i \r= \l \nu_1^{-1} v_i, \ \  \eta(s) \nu_2.v_i \r
\end{align*}
and likewise for the denominator; altogether we see $\sigma_i(p)$ is a matrix coefficient for $\eta(0)$ where the limit is taken in $\P \big(V(i)\widehat{\ox} V(i)^* \big)$.  Varying $\eta$ just scales $\sigma$ so it just remains to show $\sigma_i$ doesn't depend on the choice of $\nu_1,\nu_2$.  Let $Stab_\la(\eta(0))$ denote the stabilizer of $\eta(0)$ for the action in $\P \big(V(\la)\widehat{\ox} V(\la)^* \big)$ and $Stab_i(\eta(0))$ the stabilizer for the action in $\P \big(V(i)\widehat{\ox} V(i)^* \big)$.  The required statement follows because $Stab_\la(\eta(0)) \subset Stab_i(\eta(0))$.

Finally, thanks to proposition \ref{p:picUm}, the same argument in \ref{thm:G_ad}(b) shows these divisors freely generate the Picard group.

For $(c)$ we note that any $\Ga \x \Ga$-orbit intersects $\oTsad$ along a unique $\Tsad \x \Tsad$-orbit.  This follows from the description of the stabilizers given in proposition \ref{p:Gorb} and the fact that the standard parabolic subgroups in a Tits system are not conjugate \cite[30.1]{Humph}.  Therefore we can reduce the statement to the torus for which it is obvious.

The same argument for \ref{thm:G_ad}(d) works for $(d)$ with one minor adjustment.  In the finite dimensional case we utilized regular dominant weights with no further qualification.  For $LG$ we need all the HWR to be of the same level.  Representations $(0,\la,l)$ of level $l$ are characterized by $\la(\theta^\vee) \le l$; here $\theta^\vee$ is the co-root associated to the longest root $\theta$ of $G$.  It follows for any finite set $\la_1,...,\la_m$ of regular dominant weights there is a fixed $l$ such that $(0,\la_i,l)$ are all regular dominant.   
\end{proof}

\begin{rmk}\label{rmk:loopGad}
The proofs for (a),(c),(d) hold for $\Xas$ without further comment.  Statement (b) is true for $\Xas$ but the argument is different: it is just necessary to show $Pic(\Xas_0) = 0$.  Given \ref{p:loopX0} and the fact that for $\sLG$ we have an abstract isomorphism $\cU \cong \cU^-$, the previous assertion reduces to $Pic(\cU) = 0$.  Let $\cO$ be the sheaf of holomorphic functions and $\cO^*$ its group of units.  Then $Pic(\cU) = H^1(\cU,\cO^*)$ and vanishing for the latter group follows from a standard argument that asserts line bundles on $\cU$ are topological together with the result \cite[8.7.4 (ii)]{Segal} which asserts that $\cU$ is contractible.  To show line bundles are topological let $\cO_{cts}$ be the sheaf of continuous functions on $\cU$ and $\cO_{cts}^*$ its group of units.  Then we have a commutative diagram (suppressing the mention of $\cU$)
\[
\xymatrix{
H^1(\cO) \ar[r]\ar[d] & H^1(\cO^*)\ar[r]\ar[d] & H^2(\Z)\ar[r]\ar@{=}[d] & H^2(\cO) \ar[d]\\
H^1(\cO_{cts}) \ar[r] & H^1(\cO_{cts}^*)\ar[r] & H^2(\Z)\ar[r] & H^2(\cO_{cts})
}
\]
The vanishing of $H^i(\cO_{cts})$ for $i \ge 1$ follows because $\cU$ is paracompact and the vanishing of $H^i(\cO)$ for $i \ge 1$ is proved in \cite[1.1]{Lamp}.  Altogether we have $H^1(\cO^*) = H^1(\cO^*_{cts})$ as required.
\end{rmk}

\begin{proof}[proof of prop. \ref{p:Gorb}]
That $T(J)$ is in the stabilizer follows from the description of $\oTsad$ given in the proof proposition \ref{p:loopT} and the fact that $Z(L_J) = \cap_{k \not \in J} \ker \a_k$.  So let us focus on the group $S(J)$.

The group $\Gsd\x \Gsd$ and in particular the standard parabolic subgroups are generated by the root subgroups $U(\a) \cong \mathbb{G}_a$ which it contains. So to check $S(J)$ is in the stabilzer it suffices to check it for the root subgroups it contains.  These break up into two cases.  Roots subgroups of the form $(U(\a),1)$ or $(1,U(\a))$ and those of the form $\Delta(U(\a))$.  We treat the first case; the second case follows similarly

In the first case $ \a$ is not a root of $Lie(L_J)$ and we have that $\a = \a_i + \a'$ for some $i \not \in J$.  It suffices to check that $X_\a.e_J = 0$. Recall $e_J$ is an idempotent of $End(V( \la))$ and we can express $e_J = \sum_j e_j \otimes e_j^*$ where $j$ ranges over some subset of the weights of the representation.  Therefore to show $X_\a.e_J = 0$ it suffices to show $X_\a.e_j = 0\ \forall j$.  Assume that $X_\a.e_j \ne 0$ for some $j$.  The weight $j$ has the property that $\la - j \in \sum_{i \in J} n_i \a_i$ with $n_i \ge 0$.  But if $e_\mu:= X_\a.e_j$ is not zero then it is a weight vector of weight $\mu = \a + j$.  But then $\la - \mu$ fails to be a sum of positive roots, contradiction.


This shows that $T(J)S(J)$ is contained in the stabilzer, and for codimensional reasons there can't be a higher dimensional group that stabilizes $e_J$.  As $\Gsd$ is connected, there can't be other components.

Finally, according to our conventions, $Orbit(e_{J'})\subset \ol{Orbit(e_{J})}$ off $J \subset J'$.  In this case from general properties of flag varieties we have surjections $LG/P_{J'} \to LG/P_{J}$ with fiber $P_J/P_{J'}$.  So indeed we get a surjection $\ol{Orbit(e_{J})}\xrightarrow{\pi_J} LG/P_{J} \x LG/P^-_{J}$.  Further the fibers are compactifications of $L_{ad,J}$.  To conclude that is it the wonderful compactification one can note $e_J$ is the identity in $End(V(\la_J))$ for a regular dominant weight $\la_J$ or $L_{J,ad}$.  

Alternatively, Let $m = rk(L_{J,ad})$ and $\beta_1, \dotsc, \beta_m$ its simple roots. Then for $p = \pi_J(e_J)$ we have $\pi_J^{-1}(p)$ is $L_{ad,J}\x L_{ad,J}$-stable, contains $2^m$ orbits, one open orbit isomorphic to $L_{ad,J}$ and the $L_{ad,J}\x L_{ad,J}$ orbit closures are given intersections of $\pi_J^{-1}(p) \cap \ol{Orbit(J \cup \beta_i)}$.  It follows that  $\pi_J^{-1}(p)$ isomorphic to the wonderful compactification of $L_{J,ad}$.\end{proof}

\begin{proof}[proof of corollary \ref{c:Borb}]
The given expression is stable under the action of $\cB^- \x \cB$ and disjointness of the expression is implied by the fact that $\cB^- \x \cB \cap \Wa = 1$.

Any $p \in Orbit(e_J)$ can be expressed as $g_1.e_J.g_2$ for $g_i \in \Gsd$. Recall the Levi factorization $P_J = L_JU_J$. A variant of equation \eqref{Birk} shows we can write $g_1 = v_1^-w_1l_1u_1$ and $g_2 = v_2l_2w_2u_2$ where $(v_i,w_i,l_iu_i) \in U_J^- \x \Wa \x L_JU_J$. For $l_3 =l_1l_2^{-1}$, proposition \ref{p:Gorb} implies $g_1.e_J.g_2 = v_1w_1 l_3.e_J.w_2u_2$.  

Using the Brikhoff decomposition for $L_J$ we write $l_3 = v_3 w_3 t^{-1} u^{-1}_3 \in U^-_L w_3B_L$ where $B_L = TU_L$ is a Borel for $L_J$ and $w_3$ is the Weyl group $W_L$ of $L_J$. In fact, by proposition \ref{p:Gorb}, after replacing $l_3$ with another element of $L_J$ we can assume $(w_1, w_2) \in \Wa/W_L \x W_L \backslash \Wa$. Altogether we are reduced to the expression
\[
g_1.e_J.g_2 = v_1w_1 v_3 w_3.e_Jt u_3.w_2u_2.
\]
The result will follow if we can show $(w_1v_3w_1^{-1}, w_2^{-1}u_3w_2) \in \cU^- \x \cU$.  For this it is enough to show that if $w\in \Wa$ and $-\a_i$ is a negative root then $w(-\a_i)$ is a positive root $\leftrightarrow$ there exists an reduced expression $w= s_{i_1}\dotsb s_{i}$.  The $\Leftarrow$ direction follows from \cite[pg.61]{Brion}.  

Using induction on the length of $w$ we can assume $w = s_j w'$ where $w'(-\a_i)$ is a negative root.  The hypothesis on $w$ imply $w'(-\a_i) = -\a_j$ and that $s_i$ must appear somewhere in an expression for $w$. We abuse notation below and use $w$ to indicate a reduced expression for $w$.

Consider the word $w'' = ws_i$. If $ws_i$ is not reduced then by the exchange property for reflection groups we can find a reduced expression for $w$ ending in $s_i$.  Therefore we can prove the result if we can rule out the possibility that $w''$ is reduced.  If it is reduced then $w''(-\a_i)$ is a positive root by the $\Leftarrow$ direction. But $w''(-\a_i) = w(\a_i) = - w(-\a_i)$ and $w(-\a_i)$ was assumed to be positive, a contradiction.
\end{proof}

\subsection{Completeness and Stacky Extension}
Let us now address the issue of completeness.  First we show we have a non constant regular function $\ol{pr_1}$ on $\Xa$.  Namely consider the character $\Gsd = \Cs \ltimes LG\xrightarrow{pr_1}\Cs$.  Let $V_{pr_1}$ be the associated $1$-dimensional representation.  For any $p\in \Xa$ we can find $\eta\colon \Cs \to \Tsd$ and $g_1,g_2 \in LG$ such that $p = p(0) =\lim_{s\to 0} g_1\eta(s)g_2$.  For any $v \in V_{pr_1} - 0$ define
\[
\ol{pr_1}(p) = \lim_{s\to 0} \frac{ g_1\eta(s)g_2.v}{v} = \lim_{s\to 0} \frac{\eta(s).v}{v} = \lim_{s\to 0} \l \eta(s), pr_1\r 
\]
Then $\ol{pr_1}$ extends $pr_1$ and we have a cartesian diagram
\begin{equation}\label{sfiber}
\xymatrix{
 \Gsd/Z(G)\ar[d]^{pr_1}\ar[r] & \Xa\ar[d]^{\ol{pr_1}}\\
 \Cs \ar[r] & \A^1
}
\end{equation}
The special fiber over $0 \in \A^1$ is the boundary and we denote it $\p \Xa$; there is a similar diagram for $\Xap,\Xas$.  This shows $\Xa$ is not compete but for $\p \Xap$ we have
\begin{thm}\label{thm:complete}
An arbitrary morphism
\[
\ec \C((s)) \to \p \Xap
\]
extends to $\C[[s]]$
\end{thm}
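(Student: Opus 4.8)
The plan is to reduce the statement to the ordinary valuative criterion of properness for proper schemes, by decomposing $\p\Xap$ into $\Gap\times\Gap$-orbit closures, recognising each as a representable proper morphism to a partial affine flag variety, and then exhausting everything by projective subschemes.

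First I would transcribe Proposition~\ref{p:Gorb} and Corollary~\ref{c:Borb} to the polynomial loop group: their proofs are purely representation-theoretic (for $\Gap$ one works with the restricted dual, as in Remark~\ref{PER}), so the orbit stratification and the stabiliser computation carry over verbatim and give
\[
\p\Xap \;=\; \bigsqcup_{\emptyset\ne J\subseteq[0,r]} Orbit(e_J),
\]
the stratum $J=\emptyset$ being the open part $\Gsdp/Z(G)$, which is excluded. By the version of the cartesian square~\eqref{sfiber} for $\Xap$, the boundary $\p\Xap$ is a \emph{closed} sub-ind-scheme of $\Xap$, so every orbit closure $\overline{Orbit(e_J)}$ lies inside $\p\Xap$. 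A morphism $\ec\C((s))\to\p\Xap$ then sends the unique point into one stratum $Orbit(e_J)$ with $J\ne\emptyset$, and, $\overline{Orbit(e_J)}$ being closed, it factors through $\overline{Orbit(e_J)}$. So it suffices to verify the valuative criterion for each $\overline{Orbit(e_J)}$ with $J\ne\emptyset$.

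Fix a nonempty $J$. Deleting at least one node from the affine Dynkin diagram leaves a finite-type diagram, so $L_J$ is a \emph{finite-dimensional} reductive group, whence $\overline{L_{J,ad}}$ is an ordinary wonderful compactification (Section~\ref{s:Gad}), a smooth projective variety. By Proposition~\ref{p:Gorb} the map $\overline{Orbit(e_J)}\to B_J:=\Gsdp/P_J\times\Gsdp/P_J^-$ is an $\overline{L_{J,ad}}$-bundle, hence representable and proper. Assume for the moment that $B_J$ is an ind-projective ind-scheme, say $B_J=\varinjlim_n B_J^{(n)}$ with the $B_J^{(n)}$ projective and the transition maps closed immersions. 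Then the base changes $Z^{(n)}:=\overline{Orbit(e_J)}\times_{B_J}B_J^{(n)}$ are proper schemes (a bundle with projective fibre over a projective base), the $Z^{(n)}\hookrightarrow Z^{(n+1)}$ are closed immersions, and $\overline{Orbit(e_J)}=\varinjlim_n Z^{(n)}$. As $\ec\C((s))$ is quasi-compact, any morphism to $\overline{Orbit(e_J)}$ factors through some $Z^{(n)}$, where it extends over $\ec\C[[s]]$ by the valuative criterion for the proper scheme $Z^{(n)}$; composing with $Z^{(n)}\hookrightarrow\p\Xap$ produces the desired extension (which is moreover unique, by separatedness of $Z^{(n)}$).

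The hard part, and the single point where the \emph{polynomial} hypothesis is really used, is that $B_J$ is ind-projective. The first factor $\Gsdp/P_J$ is a partial affine flag variety of $\Gsdp$ and is a projective ind-variety: by (the partial-flag version of) Proposition~\ref{rep1}(d) it embeds as a closed ind-subscheme of $\P(V(\la_J))$ for a suitable dominant weight $\la_J$, and $\P(V(\la_J))$ is a filtered colimit of finite-dimensional projective spaces along linear closed immersions. For the second factor $\Gsdp/P_J^-$ one would, over $\C((z))$, be confronted with the thick flag variety --- an infinite-type scheme for which the valuative criterion genuinely fails --- but in the polynomial setting Remark~\ref{PER} says that Proposition~\ref{rep1}(d) holds equally for $\Gap$ acting on the \emph{restricted} dual $V(\la_J)^*_{res}$; since $\P(V(\la_J)^*_{res})$ is again a filtered colimit of finite-dimensional projective spaces along linear closed immersions, and $\Gsdp/P_J^-$ is closed in it (Lemma~\ref{l:cl.or.K-M}), it too is a projective ind-variety. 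Thus $B_J$ is a product of two projective ind-varieties, and the plan is complete.
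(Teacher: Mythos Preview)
Your proof is correct and follows the same route as the paper's: factor through a single orbit closure $\overline{Orbit(e_J)}$, use Proposition~\ref{p:Gorb} to fiber it over $\LpG/P_J\times\LpG/P_J^-$ with proper projective fibre $\overline{L_{J,ad}}$, and then invoke ind-projectivity of the base (the point where the polynomial hypothesis is essential) together with the valuative criterion. The only cosmetic difference is that the paper reduces to $|J|=1$ at the outset, since $\p\Xap$ is already covered by the $r+1$ divisor closures $\overline{Orbit(\{j\})}$, whereas you stratify by all nonempty $J$; you are also more explicit than the paper about why $\LpG/P_J^-$ is ind-projective.
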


\begin{proof}
$\p \Xap$ is the union of $r+1$ components $\ol{Orbit(J = \{j\})}$ and any map from $\C((s))$ lands in one of them.  Proposition \ref{p:Gorb} for $\Xap$ gives a diagram
\[
\xymatrix{\ec \C((s))\ar[d]\ar[r]^{f} & \ol{Orbit(J)}\ar[d]^{\pi_J}\\
\ec \C[[s]] \ar[r]^{f' \ \ \ \ \ \ \ \ \ \ } & \LpG/P_J \x \LpG/P^-_J }
\]
The composition $\pi_J\circ f$ maps $\ec \C((s))$ into a finite dimensional projective variety and the valuative criterion for properness induces the map $f'$. In turn, properness of $\pi_J$ implies the result.
\end{proof}
\begin{rmk}
We do not have an analogous statement to \ref{thm:complete} for $\p\Xas$ because the smooth flag varieties are not complete.
\end{rmk}

\subsubsection{Stacky Extension}
As in the finite dimensional case we can construct a stacky extension.  With theorem \ref{thm:loopGad} in hand, one check easily that the constructions in sections \ref{sec:BrionKumar},\ref{sec:stacky} carry over to the loop group case.

There is only one slightly subtle point already mentioned in the proof of theorem \ref{thm:loopGad} is that when dealing with multiple dominant weights $\tilde \la, \tilde \mu$ they must be of the same level.  In this way, to a regular dominant weight $\tilde \la$ we can associate an ind stack $\mc{X}^{aff}$ containing $\Gsd$; there is analogously an open cell $\mc{X}_0^{aff}$ and 
\begin{thm}\label{thm:loopstackGad}
\begin{itemize}
\noindent \item[(a)] $\mathcal{X}^{aff}$ is formally smooth and independent of $\lambda$.
\item[(b)] $\mathcal{X}^{aff} - \mathcal{X}^{aff}_0$ is of pure codimension $1$ and we have an exact sequence
\[
0\to \Z^r \to Pic(\mathcal{X}) \to \hom(Z(\beta),\Cs) \to 0
\]
where the subgroup $\Z^r$ is generated by the irreducible components of $\mathcal{X} - \mathcal{X}_0$.
\item[(c)] The boundary $\mathcal{X}^{aff} - \Gsd$ consists of $r+1$ normal crossing divisors $D_0,\dotsc, D_r$ and the closure of the $\Gsd \x \Gsd$-orbits are in bijective correspondence with subsets $I \subset [0,r]$ in such a way that to $I$ we associate $\cap_{i \in I} D_i$.
\item[(d)] Let $u_0,\dotsc, u_r$ be generators of the rays of the Weyl alcove and $M$ be the monoid they generate.  Any $\Ga \x \Ga$ equivariant $\mathcal{X}' \to \mathcal{X}^{aff}$ determines and is determined by a fan supported in the negative Weyl chamber whose lattice points lie in $M$.
\item[(e)] The stabilizer of every point $p \in \mc{X}^{aff}$ is finite.
\end{itemize}
\end{thm}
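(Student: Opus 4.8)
The plan is to feed Theorem \ref{thm:loopGad} into the same toric-stack machinery used in \S\ref{sec:BrionKumar} and \S\ref{sec:stacky} to build Theorem \ref{thm:stackGad}, with the affine Weyl alcove in place of the Weyl chamber. Explicitly: let $u_0,\dots,u_r\in V_{\Tsd}$ be the primitive ray generators of the simplicial cone $C$ over $-Al_0$ inside $\mft^\ltimes_\R=Lie(\Tsd)_\R$ (observe $V_{\Tsd}=\Z\oplus V_T$ has rank $r+1$, with one ray per facet $\{\a_i=0\}$), put $\beta\colon\Z^{r+1}\xrightarrow{e_i\mapsto u_i}V_{\Tsd}$ and $T_\beta=(\Cs)^{r+1}$, let $C^{aff}_\Delta$ be the preimage of $-C\oplus C$ under the map $V_{\Tsd,\R}\oplus V_{T_\beta,\R}\to V_{\Tsd,\R}\oplus V_{\Tsd,\R}$ induced by $(id,\beta)$, build the $H=\Gsd\times T_\beta$-embedding $Y(C^{aff}_\Delta)$ via the loop-group form of Proposition \ref{prop:fan2embedding}, and set $\mc{X}^{aff}=[Y(C^{aff}_\Delta)/T_\beta]$ with open cell $\mc{X}^{aff}_0=[Y(C^{aff}_\Delta)_0/T_\beta]$. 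Every ingredient of the finite-dimensional argument has already been upgraded --- Lemma \ref{l:preimageX0} by Proposition \ref{p:loopX0}, Lemma \ref{l:T} by Proposition \ref{p:loopT}, the ``unique closed orbit'' input by Lemma \ref{l:cl.or.K-M}, the orbit/parabolic bookkeeping by Propositions \ref{p:Gorb}--\ref{c:Borb} --- so parts (a)--(d) go through verbatim from the proofs of Theorems \ref{thm:stackGad} and \ref{thm:loopGad}. In particular the identification
\[
\mc{X}^{aff}_0\;\cong\;\cU^-\times[\A^{r+1}/Z(\beta)]\times\cU
\]
(from Propositions \ref{p:loopT}, \ref{p:loopX0} and the stacky-fan dictionary) supplies, with the diagonal-gluing trick, the $\lambda$-independence and formal smoothness of (a); computing $Pic$ through the coarse space and using $Pic(\cU^-)=0$ from Proposition \ref{p:picUm} gives (b), with the free part generated by the $r+1$ boundary divisors; reduction to the torus $[\A^{r+1}/Z(\beta)]$ and its atlas $\A^{r+1}$, together with the fact (Proposition \ref{p:Gorb}, plus non-conjugacy of standard parabolics) that every $\Gsd\times\Gsd$-orbit meets $\oTsad$ in a single $\Tsd\times\Tsd$-orbit, gives (c); and the monoid-lifting observation of Remark \ref{ monoid} gives (d), subject to the caveat from the proof of \ref{thm:loopGad}(d) that a finite set of regular dominant weights must first be replaced by weights of a common level.

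Part (e) is the genuinely new assertion, and I would treat it as follows. First I would emphasize that the ``stabilizer of $p$'' must mean the automorphism (inertia) group of $p$ as a point of the ind-stack $\mc{X}^{aff}$, not its $\Gsd\times\Gsd$-stabilizer: the latter is still infinite along the boundary (of the shape $\Delta(L_J)\ltimes(U_J\times U_J^-)$ times a torus, by Proposition \ref{p:Gorb}), so no reading in terms of the group action can be intended. With the presentation $\mc{X}^{aff}=[Y(C^{aff}_\Delta)/T_\beta]$, the automorphism group of a point $[y]$ is $\mathrm{Stab}_{T_\beta}(y)$, a closed subgroup of the torus $(\Cs)^{r+1}$, so the claim is equivalent to $T_\beta$ acting on $Y(C^{aff}_\Delta)$ with finite stabilizers. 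I would establish this in the quotient stack directly: arguing as in the proof of Theorem \ref{thm:G_ad}(a) with Lemma \ref{l:cl.or.K-M} replacing Lemma \ref{l:cl.or.}, the $\Gsd\times\Gsd$-translates of $\mc{X}^{aff}_0$ cover $\mc{X}^{aff}$, so any $p$ can be written $p=(g_1,g_2).p_0$ with $p_0\in\mc{X}^{aff}_0$; since $(g_1,g_2)$ acts as an automorphism of $\mc{X}^{aff}$ it identifies $\mathrm{Aut}(p)\cong\mathrm{Aut}(p_0)$, reducing us to the open cell. But in $\mc{X}^{aff}_0\cong\cU^-\times[\A^{r+1}/Z(\beta)]\times\cU$ the two outer factors are honest ind-schemes (with trivial inertia) and every point of $[\A^{r+1}/Z(\beta)]$ has automorphism group contained in $Z(\beta)$, whence $\mathrm{Aut}(p_0)\hookrightarrow Z(\beta)$. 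It remains to see $Z(\beta)$ is finite, i.e.\ that $\beta\colon\Z^{r+1}\to V_{\Tsd}$ is injective with finite cokernel: this holds because the $u_i$, equivalently the simple affine roots $\a_0,\dots,\a_r$, are linearly independent --- indeed $\a_0=(1,-\theta)$ together with $(0,\a_1),\dots,(0,\a_r)$ already spans the finite-index sublattice $\Z\oplus Q$ of $\Z\oplus\ch{T}=\ch{\Tsd}$, using that $\theta$ lies in the root lattice $Q$ --- so $Z(\beta)=\ker(T_\beta\to\Tsd)$ is finite, exactly as $Z(\beta)$ is always finite in the situation of \S\ref{sec:stacky}.

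The step I expect to carry the real weight is the finiteness of $Z(\beta)$, since it is the only place where the affine picture is not a mechanical copy of the finite-dimensional one: one must resist viewing $Al_0$ as an $r$-dimensional polytope in $\mft_\R$ --- in which a naive $\beta$ would not even be injective --- and instead cone it, at ``level $1$'', inside the rank-$(r+1)$ cocharacter lattice of the semidirect-product torus $\Tsd=\Cs\times T$, which is precisely the reason the extra $\Cs$-factor was built into $\Gsd$ in the first place. A secondary, purely organizational matter is making the global presentation $\mc{X}^{aff}=[Y(C^{aff}_\Delta)/T_\beta]$ and the product form of $\mc{X}^{aff}_0$ compatible with the ind-filtration $\mc{X}^{aff}=\bigcup_k\mc{X}^{aff}_k$, so that finiteness of inertia is checked level by level; this is routine, given that each $(\Ga)_k$ is a scheme containing $Z(\Ga)$ and that the $T_\beta$-action preserves the filtration. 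I anticipate no real difficulty in (a)--(d): each lemma they rely on has already been proven for $LG$ in \S\ref{ss:loopdef} and the intervening subsections, and what remains is formal.
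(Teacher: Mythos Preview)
Your proposal is correct and follows essentially the same route as the paper: parts (a)--(d) are deduced from Theorems \ref{thm:loopGad} and \ref{thm:stackGad} by running the stacky-fan machinery of \S\ref{sec:stacky} with the cone on $-Al_0$ in place of the Weyl chamber, and (e) is reduced via $\Gsd\times\Gsd$-translates to the open cell $\mc{X}^{aff}_0\cong\cU^-\times[\A^{r+1}/Z(\beta)]\times\cU$, where finiteness of inertia is the finiteness of $Z(\beta)$. The paper's proof is a terse two-line reference to precisely these ingredients; your write-up just makes explicit the finiteness of $Z(\beta)$ (which the paper takes as built into the stacky-fan setup) --- note, though, that the linear independence you need is that of the ray generators $u_i\in V_{\Tsd}$, which follows directly from the cone over $Al_0$ being full-dimensional in $\mft^\ltimes_\R$, rather than from the statement about the $\a_i$ spanning a finite-index sublattice of $\ch{\Tsd}$.
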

\begin{proof}
Statements (a) - (d) follow readily from the corresponding statements for theorems \ref{thm:loopGad} and \ref{thm:stackGad}.  For statement (e) it suffices to look at points in $\mc{X}^{aff}_0$.  By proposition \ref{p:loopX0} (e) holds because it does for every point in the toric stack $\ol{\Tsd_0} \cong [\A^{r+1}/Z(\beta)]$.
\end{proof}
We can also replace $\Gsd$ with $\Cs \ltimes \LpG$.

\section{Some toric geometry of LT}\label{sec:LT}
An interesting aspect of the representation of loop groups is the appearance of a new intermediate abelian group.  More precisely, instead of asking for a maximal torus $T \subset LG$, we can ask for a maximal abelian subgroup $H \subset LG$.  The standard choice is $H = LT$ but there are others choices, see for example \cite[3.6]{loop}. 

Let $\Cs\ltimes \tilde LT$ denote the inverse image of $\LTsd = \Cs \ltimes LT$ under the projection $\Ga \to \Gsd$.  In fact, the group structure on $\Cs\ltimes \tilde LT$ depends only on the Killing form. More generally, given any positive definite form $Q$ on$Lie(T)$, one can construct an analogous central extensions of $\LTsd$. For any of these centrally extended groups, one can construct representations analogous to HWR or PER of $\Ga$ and therefore talk about a wonderful embedding of $\Cs\ltimes \tilde LT$.

In this section, we will study the closure of $\Tsd$ in these embeddings. This is the only interesting part in the sense that there is a group $N \subset \LTsd$ and $\LTsd \cong \Tsd \x N$ and the orbit of the identity under $N$ is closed.

The main result of this section is theorem \ref{thm:LT} which states that the closure of is an infinite type toric variety whose fan can be realized as the cone on a Voronoi diagram for $Q$; the latter is an object studied in combinatorics.

\subsection{The group $\tvt$}
Let $\Taf$ be a central extension of $\LTsd$ and let $V$ be an irreducible representation of $\Taf$; we get a ``wonderful embedding''
\[
\ol{\LTsd}:= \ol{\Taf \x \Taf.[id]} \subset \P (V\widehat{\ox}V^*).
\]
 It turns out understand the closure of $\Tsd$ in $\ol{\Taf \x \Taf.[id]}$ it suffices to look at the orbit under the subgroup $\Tsd \x V_T$ or more precisely its central extension $\tvt \subset \Taf$.

\nomenclature{$\Taf$}{central extension of $\Cs \ltimes LT$}%
\nomenclature{$\tvt$}{subgroup of $\Taf$}%
\nomenclature{$\tlt0$}{subgroup of $\Taf$}%
\nomenclature{$\ol{\LTsd}$}{Embedding of the $\LTsd$}

Let us begin by pinning down the group structure of $\tvt$. We can determine the structure on the central extension by looking at the case when $T$ sits inside a simple group $G$.  In this setting we have a linear map $V_T \to \ch{T}$, $\eta \mapsto \l \eta, \r$ given by the Killing form.  The affine Weyl group $\Wa = W \ltimes V_T$ has a natural action on the characters of $T^{aff}:= \Tsd \x \Cs_c = \Cs \x T \x \Cs_c$.  This action is computed in the proof of \cite[13.1.7]{Kumar}.  We identify characters with their derivatives so that if we have an element $t^{aff} = (e^r, e^t,e^c) \in T^{aff}$ then a character $\tilde \la = (n, \la, h)$ evaluates to 

\[
\la(t^{aff}) = e^{n r + \la(t) + h c} .
\]
With this notation the action of $\eta \in V_T \subset \Wa$ is
\begin{align*}
\tilde \la \xrightarrow{\eta}& \eta.\tilde \la =  \tilde \la + \Big( - \la(\eta) + {h \over 2}\l \eta,\eta\r \ ,\ \ h\l \eta, \r,\ \ 0 \Big) = \Big( n - \la(\eta) + {h \over 2}\l \eta,\eta\r, \ \ \ \la + h\l \eta, \r,  \ \ \ \ h  \Big)\\
\left[\begin{array}{c}
    e^r \\ 
    e^t \\ 
    1 \\ 
  \end{array}\right] \mapsto& \exp \Bigg(\big[ n - \la(\eta) + {h \over 2}\l \eta,\eta\r \big] \cdot r  + \big[\la(t) + h \l \eta, t\r \big] \Bigg)\\
  \mapsto& \exp \Bigg([n - \la(\eta) ]\cdot r \ \ + \ \ \la(t) \ \  + \ \  h\cdot [ \l \eta, t \r + {1 \over 2}\l \eta,\eta\r r] \Bigg)
\end{align*}  
We can use this to understand conjugation in the central extension, for the action of $\eta$ can also be interpreted as
\begin{align*}
T^{aff} \xrightarrow{\eta \circ ( - ) \circ \eta^{-1}}  T^{aff}  &\xrightarrow{\tilde \la} \Cs\\
\left[\begin{array}{c}
    e^r \\ 
    e^t \\ 
    1 \\ 
  \end{array}\right]     \mapsto
  \left[\begin{array}{c}
    e^r \\ 
    e^{t -\l \eta, \r r} \\ 
    c(\eta,t,r) \\ 
  \end{array}\right] &\mapsto \exp\bigg(n r + \la(t) -\la(\eta) r + h c(\eta,t,r)\bigg)\\
  & \mapsto \exp\bigg([n - \la(\eta)] \cdot r \ \  + \ \  \la(t) \ \  + \ \ h\cdot  c(\eta,t,r)\bigg)
\end{align*}

We are using the semi-direct product to compute the middle row above.  Comparing we see
\[
c(\eta,t,r) = \l \eta, t \r + {\l \eta,\eta\r\over 2} r 
\]
The cocycle $c(\eta,t,r)$ determines the group law on $\tvt$:  topologically $\tvt = \Cs \x T\times V_T \times \Cs_c$ and denoting elements by $(u,H,\eta,w)^T$, the group structure has the form:
\begin{align*}
\colv{u_1}{H_1}{\eta_1}{1} \cdot 
\colv{u_2}{H_2}{\eta_2}{1}
 =&  
 \colv{u_1}{H_1}{1}{1}\cdot
 \eta_1 \cdot
 \colv{u_2}{H_2}{\eta_2}{1}
\cdot  \eta_1^{-1} \eta_1\\
 = 
 \colv{u_1}{H_1}{1}{1}
\cdot 
\colv{u_2}{\eta_1^{-1}(u_2)H_2}{ \eta_1\eta_2}{c(\eta_1,H_2,\rho_2)}
=& 
\colv{u_1 u_2}{\eta_1^{-1}(\rho_2)H_1H_2 }{ \eta_1\eta_2}{c(\eta_1,H_2,\rho_2)}
\end{align*}
Thus we can get a group structure for every positive definite symmetric form $Q$ via
\[
c_Q(\eta,H,u) = Q( \eta, H) + u^{Q(\eta,\eta)\over 2}.
\]
To emphasize the dependence on $Q$, we let $\tilde Q$ denote $\tvt$ with its group structure determined by $Q$.
\nomenclature{$\tilde Q$}{The group $\tvt$ with group structure determined by $Q$}%
One can define $c_Q$ even if $Q$ is not positive definite.  However when we look at representations of these groups we want want the weight spaces of the non central $\Cs \subset \tilde Q$ to be bounded below and we also want the weight spaces to be finite dimensional; these conditions fail if $Q$ is not positive definite.

\subsubsection{Representations of $\tvt$}\label{sec:repLT}
In \cite{Segal}, Segal describes a representation of $\Cs \ltimes \tilde LT$ on a vector space of the form $V_1 \otimes V_2$ where $V_1$ is a countable dimensional representation of $\Cs \ltimes T \widetilde{\x} V_T$ and $V_2$ is an uncountable dimensional representation of $(LT)_0/T$.  This produces a positive energy representation of $\tilde LT$.  In this subsection we focus on $V_1$ because this is all that is necessary to understand the closure 
\[
\ol{\Tsd} \subset \ol{L^\ltimes T}
\]

Fix a positive definite symmetric form $Q$ on $V_T$; we view this as a linear map $V_T \to \ch{T}$.  Consider the group $\tilde Q$ from the previous section. We describe a representation of $\tilde Q$ on $V_1 = \oplus_{\chi \in Q(V_T)} V_{1,\chi}$.   In fact following the wonderful compactification recipe we look at the $\tilde Q \x \tilde Q$ orbit of the identity in $End(V_1)$.  

The embedding is of $\tvt/Z_Q$ for a finite subgroup $Z_Q \subset T$ defined below.  The embedding has 
\[
\tilde Q_{ad}:= \tilde Q \x \tilde Q \big / (Z_Q \Cs_c \x Z_Q \Cs_c)\Delta(\tilde Q) = \frac{\tvt}{Z_Q}
\]
\nomenclature{$\tilde Q_{ad}$}{adjoint group of $\tilde Q$}
as a dense open subset set.  The connected components are indexed by $V_T$ and the connected component of the identity is the closure of $\Tsd/Z_Q$. The group $Z_Q$ is $\ker \hom(Q,\Cs)$.  In more words, we have an isomorphism $T \cong \hom(\ch{T},\Cs)$ and therefore we get a map
\[
T \cong \hom(\ch{T},\Cs) \xrightarrow{\hom(Q,\Cs)} \hom(V_T,\Cs) \cong (\Cs)^{\dim T}
\]
and $Z_Q := \ker \hom(Q,\Cs)$.
\nomenclature{$Z_Q$}{Subgroup of torus assoicated to a quadratic form $Q$.}  
\begin{rmk}
If $T$ is a maximal torus of a semi simple lie group $G$ and $Q$ is the Killing form then $Z_Q = Z(G)$.  
\end{rmk}

Let us return to the representation $V_1$.  We can describe it as follows.  Let $v_\mu \in V_1$ be a weight vector with weight $\mu$, then:
\begin{align} \label{eq:LTrep}
\notag
\eta.v_\mu = v_{\mu + Q(\eta)}\\
t.v_\mu = \mu(t)v_\mu\\  \notag
u.v_\mu = u^{Q(\mu,\mu)/2}.v_\mu
\end{align}  
\begin{rmk}\label{rmk:minT}
One can check that this indeed defines a representation; a verification of this appears in \cite[pg.306]{Segal}.  An important property we can see from \eqref{eq:LTrep} is that if $\eta \in V_{\Tsd}$ is such that the composition $\Cs \xrightarrow{\eta} \Tsd = \Cs \x T \xrightarrow{pr_1} \Cs$ is given by a positive integer then
\[
lim_{s \to 0} \eta(s) \mbox{ exists}
\]
as an element of $\P \big(V_1 \otimes V_1^*\big)$.  To see this it is enough to check the function $\mu \to \l \mu, \eta \r$ from the weights of $V$ to $\Z$ has a well defined minimum value.  This follows because the last line of \eqref{eq:LTrep} shows the function $\mu \to \l \mu, \eta \r$ is quadratic with leading order coefficient positive.  Further the minimum is achieved at a finite number of weight spaces $\mu$.
\end{rmk}

We can identify the image of $\Tsd$ in $V_1\widehat{\otimes} V_1^*$ as diagonal matrices. Such matrices we can identify with $\hat V_1 = \prod_{\mu \in \ch{\Tsd} }(V_{1})_\mu$.  With this translation we have a representation of $\Tsd\to \hat V_1$ and the toric variety we are interested in is  $\ol{\Tsd_Q}:=\ol{\Tsd. \left[\prod_\mu 1\right]} \in \P \hat V_1$.
\nomenclature{$\ol{\Tsd_Q}$}{toric variety inside an embedding for the group $\tilde Q$}

\subsection{Voronoi and Delaunay Subdivisions and the fan}\label{sec:LTfan}
The goal of this subsection is to describe the fan for the toric variety $\ol{\Tsd_Q}$ defined in the previous subsection.  We begin by introducing the relevant combinatorics.

Let $W$ be a real vector space equipped with an inner product $Q$.  For $v,v' \in W$ the {\it distance} between $v,v'$ is
\[ 
|v-v'|_Q:= \bigg(Q(v-v',v-v')\bigg)^{1/2}.  
\]
Given a finite set of points $S \subset W$ one can form the Voronoi diagram or Voronoi partition associated to $S$.  This is partition $W = \cup_{s \in S}C_s$ where
\begin{equation*}
C_s = \{w \in W|\ \  |w- s|_Q \le |w-s'|_Q \forall s' \in S - s\}
\end{equation*}
We can replace $S$ with a regular lattice and apply the same procedure to get tiling of $W$.  For example if $W = \R^2$ and $Q = id$ and we consider the lattice $\Z^2$ then we get a tiling by squares centered around the points of $\Z^2$; the vertices of the squares are at the points $(1/2,1/2) + \Z^2$.  For general $Q$ these tilings are well studied and have many nice properties; see \cite{Voronoi}. 

Dual to Voronoi diagrams is the notion of Delaunay subdivision.  Let $L \subset W$ be a lattice.  For $r>0$ and $p \in W$ set $B(p,r) = \{w \in W :\ \ |w- p|_Q \le r \}$.  For any $p \in W$ set $r(p) = \min\{t \in \R_{\ge 0}| \ B(p,t) \cap L \ne \emptyset\}$ and 
\[
P(p) = \mbox{Convex Hull}( L \cap B(p,r(p)).
\]  
\nomenclature{$P(p)$}{convex hull of a finite number of points in a vector space.}
Then we get a subdivision of $W$ via $W = \cup_{p \in W}P(p)$.  Alternatively, the convex hull of $p_1, ..., p_n \in L$ is in the subdivision if and only if $C_{p_1} \cap \dotsb \cap C_{p_n} \ne \emptyset$

Let $C_s^0$ be defined by replacing $\le$ with $<$ in the definition of $C_s$.  Then $C_{\vec 0}^0 = \{p \in W| P(p) = 0\}$.  

Now let us explain the connection between this combinatorics and $\ol{\Tsd_Q} \subset \ol{ \tilde Q_{ad}}$.

\begin{thm}\label{thm:LT}
Let $\mathcal{F}$ be the fan of $\ol{\Tsd_Q}$.  Then $\mathcal{F}$ is contained in $V_{T,\R} \oplus \R_{>0}$. For $t>0$ identify $V_{T,\R}$ with $V_{T,\R}\oplus t$; there is a lattice $L_t \subset V_{T,\R}$ such that $\mathcal{F}$ is the cone on the Voronoi diagram associated to $(L_t, V_{T,\R}, Q)$. 
\end{thm}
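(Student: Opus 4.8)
The plan is to recognise $\ol{\Tsd_Q}$ as an infinite type toric variety for the torus $\Tsd=\Cs\times T$, to read off its fan as the normal fan of a ``weight polyhedron'' coming from \eqref{eq:LTrep}, and then to see the Voronoi structure drop out by completing the square.

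First I would set up the toric data. Write $M=\Z\oplus\ch{T}$ for the character lattice of $\Tsd$ and $N=\Z\oplus V_T$ for its cocharacter lattice, so $N_\R=\R\oplus V_{T,\R}$, the first summand recording the $pr_1$-component of a cocharacter. By \eqref{eq:LTrep} the diagonal coordinate of $\hat V_1$ attached to the weight $\mu=Q(\lambda)$, $\lambda\in V_T$, carries the $\Tsd$-character $v_\lambda:=\big(\tfrac12 Q(\lambda,\lambda),\,Q(\lambda)\big)\in M$ (the exponents $\tfrac12 Q(\lambda,\lambda)$ are integral, or become so after rescaling the central $\Cs$; the fan does not see this). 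Thus $\ol{\Tsd_Q}$ is the closure of the image of the monomial map $\Tsd\to\P\hat V_1$, $t\mapsto[(v_\lambda(t))_{\lambda\in V_T}]$, and its fan $\mathcal F$ lives in $N_\R$. Using the limit criterion already invoked in Remark \ref{rmk:minT}, a cocharacter $\eta=(n,\xi)\in N$ produces a point $\lim_{s\to0}\eta(s)$ of $\ol{\Tsd_Q}$ precisely when $\lambda\mapsto\l v_\lambda,\eta\r=\tfrac n2 Q(\lambda,\lambda)+Q(\lambda,\xi)$ attains a minimum on $V_T$, i.e.\ precisely when $n>0$ (or $\eta=0$): for $n>0$ this is a positive definite quadratic plus a linear term, while for $n=0$ it is unbounded below unless $\xi=0$. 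This already gives $\mathcal F\setminus\{0\}\subset V_{T,\R}\oplus\R_{>0}$, and identifies $\lim_{s\to0}\eta(s)$ with the coordinate subspace spanned by the minimising $v_\lambda$.

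Next I would pin down the cones. Since $Q$ is positive definite the points $\{v_\lambda\}_{\lambda\in V_T}$ lie on a strictly convex paraboloid, so each $v_\lambda$ is a vertex of $\mathcal P:=\mathrm{conv}\{v_\lambda\}$ and these are all of its vertices; $\mathcal P$ has recession ray $\R_{\ge0}\cdot(1,0)$ and trivial lineality. Hence $\mathcal F$ is the normal fan of $\mathcal P$ (with the sign convention forced by the $s\to0$ limit): its maximal cones are
\[
\sigma_\lambda=\{\eta\in N_\R:\l v_\lambda,\eta\r\le\l v_{\lambda'},\eta\r\ \ \forall\lambda'\in V_T\},\qquad \lambda\in V_T,
\]
and the lower dimensional cones are their common faces. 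The finite subfans spanned by $\{\sigma_\lambda:|\lambda|_Q\le m\}$ give finite type, rational, normal toric varieties whose ascending union is $\ol{\Tsd_Q}$, so the latter is an infinite type toric variety.

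Finally I would extract the Voronoi picture. Fix $t>0$, put $\zeta=\xi/t\in V_{T,\R}$, and intersect $\sigma_\lambda$ with $\{n=t\}$: dividing the defining inequalities by $t>0$, doubling, and adding $Q(\zeta,\zeta)$ to both sides turns them into $|\zeta+\lambda|_Q\le|\zeta+\lambda'|_Q$ for all $\lambda'$, i.e.\ $\zeta$ lies in the Voronoi cell of $-\lambda$ for the lattice $V_T$ with metric $Q$. Rescaling back via $\xi=t\zeta$, the slice $\sigma_\lambda\cap\{n=t\}$ is the Voronoi cell of $-t\lambda$ for the lattice $L_t:=t\,V_T\subset V_{T,\R}$. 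As $\lambda$ runs over $V_T$ these cells tile $V_{T,\R}\times\{t\}$ and their faces match along the faces of the $\sigma_\lambda$, so the height-$t$ slice of $\mathcal F$ is exactly the Voronoi diagram of $(L_t,V_{T,\R},Q)$; since every cone of $\mathcal F$ is $\R_{\ge0}$-spanned by its height-$t$ slice, $\mathcal F$ is the cone on that Voronoi diagram. I expect the only real obstacle to be the toric bookkeeping: rigorously matching the valuative closure $\ol{\Tsd_Q}\subset\P\hat V_1$ (with $\hat V_1$ of uncountable dimension) to the toric ind-variety of the normal fan $\mathcal F$, i.e.\ checking that the limit strata are exactly the torus orbits indexed by the cones and that the finite subfans glue correctly. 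The metric computation itself is elementary.
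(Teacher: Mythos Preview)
Your proof is correct and follows the same strategy as the paper: use the limit criterion to see that a cocharacter $(n,\xi)$ lies in the fan iff $n>0$, observe that the relevant exponents $\lambda\mapsto\tfrac n2 Q(\lambda,\lambda)+Q(\lambda,\xi)$ form a positive definite quadratic, and identify the loci of constant minimiser with Voronoi cells by completing the square. Your packaging via the normal fan of the paraboloid $\mathcal P=\mathrm{conv}\{v_\lambda\}$ is a clean addition the paper leaves implicit (it argues instead through the Delaunay side, comparing $P(\beta/t)$ with $P(\beta'/t)$), and your $L_t=t\,V_T$ differs from the paper's $L_t=\tfrac1t V_T$ only because you parametrise the height-$t$ slice by $\xi$ while the paper parametrises it by $\zeta=\xi/t$.
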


\begin{proof}
Let $(a,\beta) \in V_{\Tsd} = \Z \oplus V_T$ be a co-character. Recall $\tilde Q$ acts on a vector space $V_1$, \eqref{eq:LTrep}.  Fix a coordinate $s$ on $\Cs$ then we have a map
\[
\Cs \xrightarrow{(a,\beta)} \Tsd \subset \P \hat V_1 \ \ \ \ s \mapsto \prod_{Q(\eta) \in Q(V_T)} s^{\fq{a}{\beta}{\eta}}
\]
The limit $s \to 0$ only exists if $a>0$.  Moreover, if $a>0$, then the limit exists for any value of $\beta$.  This gives the first claim.

Fix now $t \in \Z_{>0}$ and for every $\beta \in V_T$ consider the 1 parameter subgroup $diag(s^{\fq{t}{\beta}{\eta}})$.  Set
\[
p_\beta = \lim_{s\to 0} \prod_{\eta \in V_T} (s^{\fq{t}{\beta}{\eta}}) \in \P \hat V_1.
\]
For fixed $t$ and $\beta$, consider $\eta \mapsto \fq{t}{\beta}{\eta}$ as a function on $V_T$; it is quadratic and has a unique global minimum on $V_{T,\R}$.  Consequently there are only finitely many co-characters $\eta_1, \dotsc, \eta_{n} \in V_T$ where $\fq{t}{\beta}{\eta}$ attains its minimum, therefore 
\begin{align*}
p_\beta = \prod_{\eta \in V_T} (f(\eta)) \in \P \widehat{V_1} \  \ \ \  \  \ f(\eta) = \begin{cases}
1 \mbox{  if } \eta \in \{\eta_1,\dotsc, \eta_n\}\\
0 \mbox{ otherwise}
\end{cases}
\end{align*}
So in fact $p_\beta \in \P V_1$.  

Fix any $\beta \in V_T$ (with $t$ still fixed as above).  Then $\beta$ determines a set
\[
C(\beta) = \{ \beta' \in V_T | p_\beta = p_{\beta'}\}.
\]
such that the cone on $C(\beta)$ is a cone in $\mc{F}$.

We now relate this to the Voronoi subdivision $\cup_{\beta \in V_T} C_{\beta}$  associated to $(V_T \subset V_{T,\R},Q)$.  For this we need the auxiliary lattice
\[
L_t = \{\a \in V_{T,\R} | \a \mbox{ minimizes } \eta \mapsto  \fq{t}{\beta}{\eta}\}.
\]
Basic properties of quadratic functions imply that $L_t = \{\beta/t | \beta \in V_T\}$ so $L_t$ is indeed a lattice.  

We claim the $C(\beta)$ are exactly the non empty intersections $\cap_{\a_1, \dotsc, \a_m \in V_T} C_{\a_1} \cap \dotsb \cap C_{\a_m} \cap L_t$.  Indeed, assume $\beta/t,\beta'/t$ is in the intersection.  Then by properties above we have
\[
P(\beta/t) = P(\beta'/t)
\]  
but this means the lattice points $\a_1, \dotsc, \a_m$ are equidistant from $\beta/t$ and no other $\a \in V_T$ are closer.  This in turn implies that the function $\a \mapsto \fq{t}{\beta}{\a}$ is minimized on $V_T$ at exactly $\a_1, \dotsc, \a_m$. The same applies to $\beta'$.  In other words $C(\beta) = C(\beta')$.  One can run the statement in reverse to get the desired equality.
\end{proof}

\begin{rmk}
If we had worked with a the full representation $V_1\otimes V_2$ this would had the effect of adding a positive constant to the function $\a \mapsto \fq{t}{\beta}{\a}$ which doesn't effect where minimums occur.
\end{rmk}

\begin{rmk}
The fan of $\ol{\Tsd}$ in a HWR for $\tilde LG$ is not the cone on a Voronoi subdivision associated to the Killing form.  This is because a HWR of $\tilde LG$ is not irreducible as an $\tilde LT$ representation. In a regular representation of $\Ga$ all the irreducible representations of $\tilde L^\ltimes T$ appear as a direct summand. 

More generally, for a fixed $Q$, the HWR of $\tilde LT$ are indexed by the finite group $\ch{T}/Q(V_T)$.  We computed the toric variety of $\ol{\Tsd}$ in the representation corresponding to $0 \in \ch{T}/Q(V_T)$.  For nonzero elements in $\ch{T}/Q(V_T)$ the fan of $\ol{\Tsd}$ is also a cone on a Voronoi diagram but the lattice is different.\end{rmk}

\subsubsection{Example}\label{ex:LT} Consider the case $T = \Cs$ then the form $Q$ above is a positive integer; we take $Q = 2$.  This choice comes from the killing form on $\Cs \subset SL_2$.  Let us consider the Voronoi subdivision associated to $\Z$ in the normed vector space $(\R, Q)$.  Clearly the set of points closest to $n \in \Z$ consists of the line segment $[n-1/2,n+1/2]$; in the notation of \ref{sec:LTfan}, $C_n = [n-1/2,n+1/2]$.  The fan of $\Tsd \subset \ol{L^\ltimes \Cs/\pm 1}$ is shown in figure \ref{pic:ltfan} where we have identified $\R$ with $\R \oplus 1$.

\begin{figure*}[htm]
\centering
\includegraphics[scale=0.65]{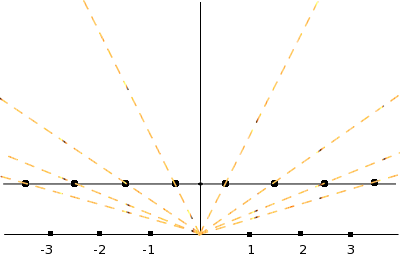}
\caption{Cone on the Voronoi subdivision.  The lattice $\Z$ is shown in square dots and the lattice points of the Voronoi subdivision is shown in circle dots.  The rays of the fan are shown in dashed dots.  Note only a finite number of rays are drawn but every point in the upper half plane lies on a ray or in between two rays.}
\label{pic:ltfan}
\end{figure*}
To differentiate $T = \Cs$ from the $\Cs$ in the semidirect product we write $T = GL_1$.  The associated toric variety $\ol{\Tsd}$ has a map to $\A^1$ extending the projection $\Cs \ltimes GL_1 \xrightarrow{pr_1} \Cs$ and fits into the following diagram
\[
\xymatrix{
\Cs \x GL_1\ar[d] \ \  \ \ar@{^(->}[r] & \ol{\Tsd}\ar[d] & \ar@{_(->}[l] \ \ \ \bigcup_{j \in \Z} \P^1_j \ar[d]\\
\Cs \  \ar@{^(->}[r] & \A^1 &   \ar@{_(->}[l] \ 0
}
\]

\section{Bundles on Curves}\label{sec:bundles}
In this section discuss the connection between $LG$ and the moduli stack $Bun_G(C)$ of principal $G$ bundles on a curve $C$.  We explain the connection between the embedding $\cXap$ and bundles on nodal curves as well as a connection between $\cXap$ and a construction of a completion of $Bun_G(C)$ on nodal curves due to Faltings.

\subsection{The double coset construction} 
Fix a smooth curve $C/\C$ and a point $p \in C$. Let $Bun_G(C),Bun_{G,hol}(C)$ denote the moduli stacks of algebraic and holomorphic principal $G$-bundles respectively. 

\subsubsection{Holomorphic Version}
Fix a local isomorphism $z$ from $\{|z| > 1/2 \}$ in $\infty \in \P^1(\C)$ to a neighborhood around $p$.  Set $D_p = \{|z| > 1\}$ and $C^* = C - \ol{D_p}$ so that $\ol{D_p} \cap \ol{C^*} = S^1$ identified with $\{|z| = 1\}$.   Consider now the smooth loop group $\sLG = C^\infty(S^1, G)$.  Define
\[
L^{sm}_C G = Hol(C^*, G) \subset \sLG,
\]
as the subgroup of loops that extend to give a holomorphic map $C^* \to G$.
We have the following theorem proved by Atiyah:
\begin{thm}
Let $G$ be a connected topological group.  The set of isomorphism classes of holomorphic principal $G$-bundles on $C$ is equal to 
\[
L^{sm}_CG\backslash \sLG/L^{sm,+}G
\]
\end{thm}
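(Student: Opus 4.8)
The plan is to realize the bijection as the isomorphism-class ($\pi_0$) version of the groupoid equivalence in Theorem~\ref{dcchol}, and to recall the clutching argument that establishes it. Thicken the decomposition $C=\overline{D_p}\cup_{S^1}\overline{C^*}$ to an open cover $U^+\supset\overline{D_p}$, $U^-\supset\overline{C^*}$ with $U^+\cap U^-$ an annulus $A$ around $S^1$. Given a holomorphic $G$-bundle $E$ on $C$, I would first choose holomorphic trivializations of $E$ over $U^+$ and over $U^-$, whose comparison is a holomorphic transition map $g_E\colon A\to G$; restricting to $S^1$ gives $g_E|_{S^1}\in\sLG$. Conversely a loop in $\sLG$, together with the trivial bundles on the two pieces, clutches to a $G$-bundle on $C$. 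The content of the theorem is that these operations match isomorphism classes, once one identifies the residual freedom: a different holomorphic trivialization over $U^+$ changes $g_E$ by right multiplication by the boundary value of a holomorphic map $U^+\to G$, i.e. by an element of $L^{sm,+}G$, and over $U^-$ by left multiplication by an element of $Hol(C^*,G)=L^{sm}_C G$; an isomorphism $E\cong E'$ induces precisely such a change. Hence $E\mapsto[g_E]$ descends to a well-defined map from isomorphism classes of holomorphic $G$-bundles on $C$ to $L^{sm}_C G\backslash\sLG/L^{sm,+}G$.

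The main input making the clutching description legitimate is that every holomorphic principal $G$-bundle is holomorphically trivial over $U^+$ and over $U^-$. This holds because $U^+$ is a disc, hence contractible, and $U^-$ is a non-compact (hence Stein) Riemann surface with the homotopy type of a $1$-complex; since $G$ is connected, $BG$ is $1$-connected, so topological $G$-bundles over these homotopy types are trivial, and the Oka--Grauert principle over the Stein manifolds $U^\pm$ upgrades topological triviality to holomorphic triviality. For a general connected topological group $G$ and continuous bundles, the same clutching argument applies verbatim with this topological triviality as the only input.

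It remains to invert the map: given $g\in\sLG$, clutch the $C^\infty$ bundle $E_g$ on $C$, endow it with a holomorphic structure (every $C^\infty$ principal bundle over a Riemann surface admits one, the integrability of a partial connection being automatic in complex dimension one), and apply the previous paragraph to produce a class $[g']\in L^{sm}_C G\backslash\sLG/L^{sm,+}G$; one checks $[g']=[g]$, so the assignment descends to the inverse of the map above. The hard part is exactly this last check: the clutching trivializations of $E_g$ are a priori only $C^\infty$, whereas $L^{sm,+}G$ and $L^{sm}_C G$ record \emph{holomorphic} one-sided extensions, so one must show the $C^\infty$-gauge discrepancy can be absorbed into the holomorphic one. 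This follows once more from Grauert's Oka principle on the Stein pieces $U^\pm$ (holomorphic and $C^\infty$ classifications of $G$-bundles agree there, so the $C^\infty$-gauge orbit of a holomorphic structure already meets every holomorphic trivialization class); for the detailed argument I would refer to \cite{loop}, since the present statement is merely $\pi_0$ of Theorem~\ref{dcchol}.
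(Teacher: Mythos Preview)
Your proposal is correct and follows the same approach as the paper, which simply cites \cite[8.11.5]{loop} and sketches in the subsequent remark exactly the clutching argument you expand: trivialize over the disc and over $C^*$ (using that bundles are holomorphically trivial there), compare to get a loop, and observe that changes of trivialization correspond to the two cosets. Your treatment is more detailed than the paper's, supplying the Oka--Grauert justification for holomorphic triviality over the Stein pieces and addressing the passage from $C^\infty$ to holomorphic clutching, but the underlying idea is identical.
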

\begin{proof}
\cite[8.11.5]{loop}
\end{proof}
\begin{rmk}\label{hol}
One proves the result by noting a $G$-bundle on $C^*$ is holomorphically trivial.  Additionally the bundle is trivial on $D_p$ and so the bundle is determined by a ``transition function'' $\ga \in \sLG$; modding out by the two subgroups amount to accounting for changes of trivialization.  
\end{rmk}

\subsubsection{Algebraic Version}
In the algebraic setup the point a local coordinate $z$ at $p \in C$ is used to identify $\C((z))$ with $frac(\widehat{\cO_{C,p}})$ and thus identify $LG$ with $G(frac(\widehat{\cO_{C,p}}))$.  In this way $\hom_{alg}(C -p,G)$ embeds as a subgroup $L_CG$ of $LG$.  More precisely, $L_C G$, is the ind algebraic group which associates to a $\C$-algebra $R$ the group 
\[
L_CG(R) := G((C-p)_R)
\]
of algebraic maps $(C - p) \x_\C \ec R \to G$.

The algebraic double coset construction is
\begin{thm}\label{algBun} Let $G$ be a semi simple group. There is a canonical isomorphism of stacks
\[ 
Bun_G(C) \cong L_C G\backslash LG/L^+G
\]
\end{thm}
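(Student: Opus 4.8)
The plan is to realise both sides as the groupoid of Beauville--Laszlo gluing data for $G$-bundles on $C$, using that over the affine curve $C-p$ a $G$-bundle is \'etale-locally trivial because $G$ is semisimple and simply connected.

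First I would set up the gluing morphism. The chosen coordinate $z$ at $p$ identifies $\widehat{\cO_{C,p}}$ with $\C[[z]]$ and its fraction field with $\C((z))$. For a $\C$-algebra $R$, the relative Beauville--Laszlo theorem --- applicable because $z$ is a non-zero-divisor on $R[[z]]$ --- says that $Bun_G(C)(R)$ is equivalent to the groupoid of triples $(E_1,E_2,\phi)$ where $E_1$ is a $G$-bundle on $(C-p)_R$, $E_2$ a $G$-bundle on $\ec R[[z]]$, and $\phi\colon E_1|_{\ec R((z))} \isomto E_2|_{\ec R((z))}$. Sending $\gamma\in LG(R)=G(R((z)))$ to the triple consisting of the trivial bundle on $(C-p)_R$, the trivial bundle on $\ec R[[z]]$, and multiplication by $\gamma$, defines a morphism $LG\to Bun_G(C)$. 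Altering the trivialisation on $(C-p)_R$ by an element of $L_CG(R)$, resp. the trivialisation on $\ec R[[z]]$ by an element of $L^+G(R)$, translates $\gamma$ on the left, resp. on the right, so the morphism descends to $L_CG\backslash LG/L^+G\to Bun_G(C)$. I would emphasise here that isomorphisms of triples are exactly such two-sided translations, which is why the quotient must be formed as a stack.

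Next I would show this morphism is an equivalence. Full faithfulness follows directly from the Beauville--Laszlo equivalence: an isomorphism from the bundle glued along $\gamma$ to the one glued along $\gamma'$ is the same datum as a pair $(g,h)\in L_CG(R)\times L^+G(R)$ intertwining $\gamma$ and $\gamma'$, i.e.\ a morphism of the quotient stack. For essential surjectivity it is enough to trivialise, \'etale-locally on $R$, both restrictions of a given $E\in Bun_G(C)(R)$: the restriction $E|_{\ec R[[z]]}$ is \'etale-locally trivial (a $G$-bundle on the spectrum of a complete local ring with algebraically closed residue field is trivial, and one deforms a trivialisation along the $(z)$-adic filtration using smoothness of $G$), and $E|_{(C-p)_R}$ is \'etale-locally trivial by the theorem of Drinfeld--Simpson --- for $G$ semisimple a bundle on a relative smooth affine curve admits \'etale-locally a reduction to a Borel, and the hypothesis $\pi_1(G)=\pi_0(G)=1$ pushes this to a trivialisation. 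A choice of both trivialisations over an \'etale cover $R'$ of $R$ exhibits $E|_{R'}$ as glued along some $\gamma\in LG(R')$, giving essential surjectivity as a morphism of stacks. One checks finally that the resulting equivalence, while built from the fixed coordinate $z$, is independent of $z$ up to a canonical isomorphism, since all of $L_CG$, $LG$, $L^+G$ transform compatibly under a change of coordinate.

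The step I expect to be the main obstacle is the \'etale-local triviality of $E|_{(C-p)_R}$ in families: over a field this is classical (Harder), but at the level of stacks it genuinely requires the family version of Drinfeld--Simpson and uses that $G$ is semisimple and simply connected. A secondary technical point is checking the flatness and non-zero-divisor hypotheses that make Beauville--Laszlo gluing an equivalence of groupoids over an arbitrary affine base, rather than merely a bijection on isomorphism classes over fields.
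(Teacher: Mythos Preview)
Your proposal is correct and matches the standard approach: the paper itself does not give a proof but cites Beauville--Laszlo (for $SL_n$) and Laszlo--Sorger (for general $G$), and the discussion immediately following the statement singles out exactly the ingredient you identify as the main obstacle, namely the Drinfeld--Simpson theorem for \'etale-local triviality of $E|_{(C-p)_R}$ in families. Your sketch is essentially the argument in those references.
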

\begin{proof}
This was proved for $G = SL_n$ by Beauville and Lazlo in \cite{Bea} and for general $G$ by Lazlo and Sorger in \cite{Sorger}.
\end{proof}
The restriction to semi simple $G$ allows us to conclude $G$-bundles are trivial over $C-p$. To see this for $G = SL_r$ note that we are considering locally free sheaves $E$ with $\det E = \cO_C$.  Over $C-p$, $E$ is just a projective module over $A = \cO(C- p)$ which is a Dedekind domain.  A structure theorem for modules over a Dedekind domain gives that
\[
E \cong A^{\oplus rk(E) - 1} \oplus \det E = A^{\oplus rk(E)}.
\]

For general semi simple $G$ choose a faithful irreducible representation $G \subset SL(V)$ and identify a principal $G$-bundle with an $SL(V)$-bundle together with a reduction of the structure group to $G$.  Then over $C-p$ the $SL(V)$ bundle is trivial and the reduction to $G$ is preserved so the $G$ bundle is trivial on $C-p$.  More generally, for a family of curves, we have the following result due to Drinfeld and Simpson \cite{Drin}.  This is the main ingredient in the proof of theorem \ref{algBun}.
\begin{thm}
Let $S$ be a scheme and $C$ a smooth proper scheme over $S$ with connected geometric fibers of pure dimension $1$ and let $G$ be a semisimple group.  Let $D$ be a subscheme of $C$ such that the projection $D \to S$ is an isomorphism.  Set $U = C - D$.  Then for any $G$-bundle $F$ on $C$ its restriction to $U$ becomes trivial after a suitable faithfully flat base change $S' \to S$ with $S'$ being locally of finite presentation over $S$.  If $S$ is a scheme over $\Z[n^{-1}]$ where $n$ is the order of $\pi_1(G(\C))$ then $S'$ can be chosen to be \'{e}tale over $S$. 
\end{thm}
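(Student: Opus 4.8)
\emph{Proof strategy.} The plan is to reduce the statement to the case where $S$ is the spectrum of a field, and then to trivialise $G$-bundles on a smooth affine curve by reducing the structure group from $G$ to a Borel $B$ and then to a maximal torus $T$. For the dévissage I would first pass, by a standard limit argument, to $S$ of finite type over $\Z$, hence Noetherian, and fix a point $s\in S$. The fibre $U_s=U\times_S\mathrm{Spec}\,\kappa(s)$ is a smooth affine curve, so $H^i(U_s,\mathcal E)=0$ for every quasi-coherent $\mathcal E$ and all $i\ge1$; applied to twists of the adjoint bundle $\mathrm{ad}(F)|_{U_s}$ this shows that the obstruction to lifting a trivialisation of $F|_{U_s}$ along any square-zero thickening vanishes, so the functor $\underline{\mathrm{Triv}}$ whose points over an $S$-scheme $Y$ are the trivialisations of $F|_{U\times_S Y}$ is formally smooth over $S$. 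Granting that $\underline{\mathrm{Triv}}$ is moreover locally of finite presentation --- a point requiring care since $U\to S$ is not proper --- it is smooth over $S$; since it carries a tautological trivialisation and a smooth surjective morphism is an fppf cover, it then suffices to prove $\underline{\mathrm{Triv}}\to S$ is surjective, i.e. that for every $s$ the $G$-bundle $F|_{U_s}$ becomes trivial over $\overline{\kappa(s)}$.

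So the heart of the matter is: for a field $k$ and a $G$-bundle $F$ on a smooth affine curve over $k$, $F$ is trivial over $\bar k$. Write $U=C-D$ with $C$ smooth and proper and extend $F$ to a $G$-bundle on $C$. A reduction of $F$ to $B$ is a section of the flag bundle $F/B\to C$, which is proper with fibre $G/B$; over the generic point of $C$ the bundle $F$ is trivial by Tsen's theorem (the function field of $C_{\bar k}$ is $C_1$), so $F/B$ has a rational section, and since $C$ is a smooth curve the valuative criterion of properness extends it to a section over all of $C$ --- thus a $B$-reduction exists. Because $B$ is the semidirect product of $T$ with its unipotent radical, which is an iterated extension of copies of $\mathbb{G}_a$, and $H^1(U,\mathcal O_U)=0$, this $B$-reduction induces over $U$ a $T$-bundle $(L_1,\dots,L_r)\in\mathrm{Pic}(U)^r$. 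Finally $\mathrm{Pic}(U)$ is generated by classes of closed points, and an elementary modification of the $B$-reduction at a point alters $(L_i)$ by the class of a point weighted by a coroot; as $G$ is \emph{semisimple} the coroots span a finite-index sublattice of the cocharacter lattice, so using these modifications together with the freedom provided by $D$ (whose class is trivial in $\mathrm{Pic}(U)$) one normalises the reduction so that the induced $G$-bundle is trivial. Carrying this out over $\bar k$ yields the fppf version of the theorem.

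\emph{Main obstacle.} I expect the genuinely hard part to be upgrading the previous step from $\bar k$ to a finite \emph{separable} extension, in order to obtain an \emph{étale} rather than merely fppf trivialisation: one must arrange that the rational section of $F/B$ supplied by Tsen's theorem, and the subsequent normalisation of the associated $T$-bundle, can be performed over a separable extension with the degree and connected-component bookkeeping under control, and this is precisely where the hypothesis that $n=\#\pi_1(G(\C))$ is invertible on $S$ enters. A secondary difficulty is the representability and finite-presentation properties of $\underline{\mathrm{Triv}}$ when $U$ is not proper over $S$, which the dévissage rests on. The remaining ingredients --- the vanishing of higher cohomology on affine curves and the reductions of the structure group from $G$ to $B$ and from $B$ to $T$ over an algebraically closed field --- are comparatively routine.
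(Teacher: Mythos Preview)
The paper does not prove this theorem. It is quoted as a result of Drinfeld and Simpson \cite{Drin} and used as an input to the double-coset description of $Bun_G(C)$; no argument for it is given in the text, so there is no ``paper's own proof'' to compare against.

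That said, your outline is essentially the Drinfeld--Simpson argument. The reduction to geometric fibres via formal smoothness of the functor of trivialisations (using $H^{\ge 1}(U_s,\mathrm{ad}\,F)=0$ on an affine curve), followed by a $B$-reduction obtained from a rational section of $F/B$ extended by properness, then passage from $B$ to $T$ via the unipotent filtration, and finally elementary modifications at points to kill the $T$-bundle on $U$, is exactly how \cite{Drin} proceeds. You have also correctly located the two genuine subtleties: (i) the finite-presentation/representability issue for $\underline{\mathrm{Triv}}$ when $U$ is only quasi-affine over $S$, which Drinfeld--Simpson handle by an explicit limit argument, and (ii) the refinement from fppf to \'etale, which uses that the obstruction to adjusting the $T$-bundle by coroot modifications lives in $\pi_1(G)$ and hence is killed \'etale-locally precisely when $|\pi_1(G)|$ is invertible. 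One small correction: Tsen's theorem as you invoke it gives triviality of $F$ at the generic point only over an algebraically closed base, so the passage to $\bar k$ should occur before, not after, producing the rational section of $F/B$; this is harmless for the fppf statement but matters for the bookkeeping in the \'etale refinement.
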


\subsection{The connection with $\Xap$ and bundles on nodal curves}
Here we focus on a modular interpretation of certain orbits in $\Xap$. This is an embedding of the group $\Gsd/Z(G)$ which is the connected component $(L G_{ad})_0$ of loops in $G_{ad}$.  Consequently the interpretation is in terms of $G_{ad}$-bundles on curves.

To begin this discussion consider the formal neighborhood of a node on a fixed nodal curve $C$.  The neighborhood is isomorphic to $D =  \ec \C[[x,y]]\big / xy$; it has a unique closed point $p = (x,y)$ and $D - p = D^* = \ec (\C((x)) \x \C((y)))$.  For any $\C$-algebra $R$ set $D_R = \ec R[[x,y]]\big / xy$ and define $D^*_R$ similarly.

Define a stack $Bun_{G,\tau}D$ on $\C$-algebras given by
\[
Bun^0_{G_{ad},\tau}D(R) =\bigg \l  G-\mbox{bundles} \begin{array}{c}
    P \\ 
    \downarrow\\ 
    D_R \\ 
  \end{array}, \tau\colon P|_{D^*_R} \widetilde{\to}  D_R^* \x G
\bigg \r
\]
 where angled brackets denote groupoid and the superscript $0$ denotes connected component of the trivial bundle. 	
 
Set $\zG = \Cs\ltimes \LpG$. The $\zG \x \zG$ orbits of $\Xap$ are parameterized by subsets $J\subset \{0,\dotsc, r\}$ (see proposition \ref{p:Gorb}).  In what follows we focus on $J$ with $|J| = 1$ and abbreviate $Orbit(\{ j\})$ by $\bO_j$; further define the subgroup $H_j \subset \zG \x \zG =: \zG^{\x 2}$ by $\bO_j = \frac{\zG^{\x 2}}{H_j}$.
\nomenclature{$\bO_j$}{Orbit in $\Xa$ associated to the subset $\{j\}$}
\nomenclature{$H_j$}{Subgroup associated to the orbit $\bO_j$}
Consider $\bO_j$ as a stack by sheafifying the presheaf $R \mapsto \frac{\zG^{\x 2}(R)}{H_j(R)}$.
 \begin{prop}
$Bun^0_{G_{ad},\tau}D$ is represented by $\mathbf{O}_0 \subset \Xap$.
\end{prop}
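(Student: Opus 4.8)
The plan is to exhibit both $Bun^0_{G_{ad},\tau}D$ and $\bO_0$ as a $G_{ad}$-torsor over a product of two copies of the affine Grassmannian, and then to match the two descriptions; equivalently, to unwind $\bO_0=\zG^{\x 2}/H_0$ via Proposition~\ref{p:Gorb}, to unwind $Bun^0_{G_{ad},\tau}D$ by normalizing the node, and to observe they coincide.

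First I would unwind $\bO_0$. By Proposition~\ref{p:Gorb} applied to $\Xap$, $\bO_0=Orbit(e_{\{0\}})$ is an $L_{\{0\},ad}$-bundle over $\zG/P_{\{0\}}\x\zG/P^-_{\{0\}}$. For $J=\{0\}$ the relevant Levi is the one with simple roots $\a_1,\dots,\a_r$, so $L_{\{0\}}=\Cs\x G$ (the $G$ being the constant loops) and $L_{\{0\},ad}=G_{ad}$, while $P_{\{0\}}=\Cs\ltimes G(\C[z])$ and $P^-_{\{0\}}=\Cs\ltimes G(\C[z^{-1}])$, with unipotent radicals $U_{\{0\}}=\ker(\mathrm{ev}_0)$ and $U^-_{\{0\}}=\ker(\mathrm{ev}_\infty)$. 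Since the semidirect $\Cs$-factor cancels, $\zG/P_{\{0\}}\cong\LpG/G(\C[z])=:\mathrm{Gr}$, the (polynomial, hence usual) affine Grassmannian, and $\zG/P^-_{\{0\}}\cong\LpG/G(\C[z^{-1}])$ is a second copy of $\mathrm{Gr}$ via $z\mapsto z^{-1}$. Letting $\mathcal{L}_0$ denote the $G_{ad}$-torsor over $\mathrm{Gr}$ associated to the principal $L_{\{0\}}$-bundle $\zG/U_{\{0\}}\to\zG/P_{\{0\}}$ (equivalently, the fibre at $z=0$ of the universal bundle on $\mathrm{Gr}$), unpacking the subgroup $H_0=T(\{0\})\cdot S(\{0\})$ shows $\bO_0\cong\mathrm{Isom}_{G_{ad}}(\mathrm{pr}_1^*\mathcal{L}_0,\mathrm{pr}_2^*\mathcal{L}_0)$ over $\mathrm{Gr}\x\mathrm{Gr}$.

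Next I would unwind $Bun^0_{G_{ad},\tau}D$. The normalization of $D$ is $\ec \C[[x]]\sqcup\ec \C[[y]]$ with the two closed points identified, and $D^*=\ec \C((x))\sqcup\ec \C((y))$; hence for a $\C$-algebra $R$ a point of $Bun^0_{G_{ad},\tau}D(R)$ is the same as a pair of $G_{ad}$-bundles with generic trivialization, $(P_x,\tau_x)$ on $\ec R[[x]]$ and $(P_y,\tau_y)$ on $\ec R[[y]]$, together with an isomorphism $\theta\colon P_x|_{x=0}\isomto P_y|_{y=0}$ of the fibres over the node. An automorphism of $(P_x,\tau_x)$ must be the identity on the dense subscheme $\ec R((x))$, hence the identity, so $(P_x,\tau_x)$ is rigid; by the uniformization underlying Theorem~\ref{algBun} (and smoothness of $G_{ad}$, so $P_x$ is \'etale-locally trivial on $\ec R$) it is classified by a point of $\mathrm{Gr}_{G_{ad}}$, and the superscript $0$ picks out the identity component, which is $\mathrm{Gr}$ because $\pi_1(G)=1$. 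Under this identification $P_x|_{x=0}$ is $\mathcal{L}_0$ pulled back along the classifying map, and likewise on the $y$-disc with $y$ in the role of $z^{-1}$; thus the remaining datum $\theta$ is precisely a point of $\mathrm{Isom}_{G_{ad}}(\mathrm{pr}_1^*\mathcal{L}_0,\mathrm{pr}_2^*\mathcal{L}_0)$ over $\mathrm{Gr}\x\mathrm{Gr}$. Therefore $Bun^0_{G_{ad},\tau}D\cong\mathrm{Isom}_{G_{ad}}(\mathrm{pr}_1^*\mathcal{L}_0,\mathrm{pr}_2^*\mathcal{L}_0)=\bO_0$, and this identification is visibly functorial in $R$, so it is an isomorphism of stacks (both sides are in fact sheaves).

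The step needing care is the matching of the two $G_{ad}$-torsor structures over $\mathrm{Gr}\x\mathrm{Gr}$: one must check that the torsor $\mathcal{L}_0$ produced by $H_0$ in Proposition~\ref{p:Gorb} is the same as the universal fibre-at-the-node appearing on the $Bun$ side, which comes down to invoking the standard affine-Grassmannian dictionary carefully (together with the minor bookkeeping already noted --- polynomial versus formal discs, the cancellation of the $\Cs$-factor, and $\pi_1(G)=1$). A more hands-on alternative would be to build a morphism $Bun^0_{G_{ad},\tau}D\to\P(V(\la)\widehat{\otimes}V(\la)^*)$ directly from matrix coefficients of the $V(\la)$-bundle associated to $P$ paired against $\tau$, in the spirit of the sections $\sigma_i$ in the proof of Theorem~\ref{thm:loopGad}, and to identify its image with $\bO_0$; but the double-coset comparison seems cleaner.
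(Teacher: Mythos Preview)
Your proof is correct and follows essentially the same strategy as the paper: both identify $\bO_0$ via Proposition~\ref{p:Gorb} as the quotient $(\LpG\times\LpG)/(G_{ad}\ltimes(N\times N^-))$, and both interpret $Bun^0_{G_{ad},\tau}D$ by passing to the normalization of the node (a pair of discs) and recording the gluing datum at the origin.

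The only real difference is packaging. The paper computes $G_{ad}(D_R)\cong G_{ad}(R)\ltimes(\hat N(R)\times\hat N(R))$ directly and then matches the two double cosets, invoking the equality of thick and thin Grassmannians $G_{ad}((z))/G_{ad}[[z]]\cong G_{ad}[z^\pm]/G_{ad}[z]$ at the very end to pass from formal to polynomial loops. You instead phrase both sides geometrically as the $G_{ad}$-torsor $\mathrm{Isom}_{G_{ad}}(\mathrm{pr}_1^*\mathcal L_0,\mathrm{pr}_2^*\mathcal L_0)$ over $\mathrm{Gr}\times\mathrm{Gr}$, with the formal/polynomial and $\Cs$-cancellation bookkeeping handled up front. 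Your version makes the fibration structure of Proposition~\ref{p:Gorb} more visible and sets up the analogy with quasi-parabolic bundles cleanly; the paper's version is a line shorter and avoids introducing the universal bundle $\mathcal L_0$. Substantively they are the same argument.
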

\begin{proof}
Any subgroup of $LG_{ad}$ is to be interpreted as the connected component of the identity; in particular, $LG_{ad}$ stands for $(LG_{ad})_0$; we don't introduce notation for this to cut down on the notation. 

Set $\hat N = \ker(G[[z]] \to G)$. Notice $G_{ad}(D_R) \xrightarrow{(x,y)\to (z,z)} G_{ad}(R)\ltimes (\hat N(R) \x \hat N(R))$ is an isomorphism. This is because $\ga \in G_{ad}(D_R)$ is equivalent to $(\ga_1,\ga_2) \in G_{ad}[[x]] \x G_{ad}[[y]]$ such that $\ga_1(0) = \ga_2(0)$.

Using faithfully flat descent one can construct inverse maps from $Bun^0_{G_{ad},\tau}D(R)$ to $\frac{G_{ad}(R((z))) \x G_{ad}(R((z)))}{G_{ad}(R)\ltimes (\hat N(R) \x \hat N(R))}$.  For example, starting from $(P,\tau)\in Bun^0_{G_{ad},\tau}D(R)$ we note for a faithfully flat extension $R\to R'$  we get $P_{R'} \cong D_{R'}\x G_{ad}$; comparing with $\tau_{R'}$ gives a $\ga_{R'}\in G_{ad}(R'((x))) \x G_{ad}(R'((y)))$. Set $R'' = R'\ox_R R'$; the two different pullbacks of $P_{R'}$ to $D_{R''}$ are isomorphic hence differ by an element in $G_{ad}(D_{R''}) \cong G_{ad}(R'')\ltimes (\hat N(R'') \x \hat N(R''))$ hence $\ga_{R'}$ descends to a well defined coset $[\ga_{R}] \in \frac{G_{ad}(R((z))) \x G_{ad}(R((z)))}{G_{ad}(R)\ltimes (\hat N(R) \x \hat N(R))}$.  The other direction is similar.

It remains to show $\frac{G_{ad}((z)) \x G_{ad}((z))}{G_{ad}\ltimes (\hat N \x \hat N)} \cong \bO_0$.  By proposition \ref{p:Gorb}, $H_0= Z(G) \x Z(G) G\ltimes (N \x N^-)$ where $N = \ker(G[z] \to G)$.  Consequently $\bO_0 = \frac{G_{ad}[z^\pm] \x G_{ad}[z^\pm]}{G_{ad}\ltimes (N \x N^-)}$.  From \cite[pg.231]{Kumar} we get $G_{ad}((z))/G_{ad}[[z]] \cong G_{ad}[z^\pm]/G_{ad}[z] \cong G_{ad}[z^\pm]/G_{ad}[z^{-1}]$ and the result follows.
\end{proof}

Fix a nodal curve $C$ with a unique node $x$ such that $C-x$ is affine.  Define $Bun^0_{G_{ad},C-p}C$ to be the stack parametrizing $G_{ad}$-bundles on $C$ with a trivialization on $C^* = C-x$. A simple variant of the above proof gives
 \begin{prop}\label{p:O0}
$Bun^0_{G_{ad},C^*}C$ is represented by $\mathbf{O}_0 \subset \Xap$.
\end{prop}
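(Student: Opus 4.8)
The plan is to adapt the proof of the preceding proposition, which identified $Bun^0_{G_{ad},\tau}D$ with $\bO_0$, to the case of a global nodal curve $C$ with unique node $x$ and affine complement $C^* = C - x$. The key structural difference is that instead of gluing over a disjoint union of two formal punctured discs $D^* = \ec(\C((x)) \times \C((y)))$, we glue a trivial bundle on $C^*$ to a trivial bundle on the formal neighborhood of the node; but because $C^*$ is affine and $G$ is semisimple, the relevant descent computation reduces to essentially the same local picture.

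First I would recall that any $G_{ad}$-bundle on $C$ restricted to $C^*$ is trivial after a faithfully flat (indeed, in characteristic $0$ with $\pi_1$ hypotheses, \'etale) base change, by the Drinfeld--Simpson theorem quoted in the excerpt. Fixing the trivialization on $C^*$ that is part of the data, a $G_{ad}$-bundle with such a trivialization is then determined by its restriction to the formal neighborhood $D_R = \ec R[[x,y]]/xy$ of the node together with the transition datum comparing the given $C^*$-trivialization with a (locally chosen) trivialization near the node. Using the local coordinates $x,y$ at the two branches to identify the completed local rings, and using that $C^* = C - x$ meets the formal neighborhood in the two punctured formal discs, the comparison lives in $G_{ad}(R((x))) \times G_{ad}(R((y)))$. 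As in the previous proposition, the isomorphism $G_{ad}(D_R) \cong G_{ad}(R) \ltimes (\hat N(R) \times \hat N(R))$ (where $\hat N = \ker(G[[z]] \to G)$) captures precisely the ambiguity in the trivialization near the node, while trivializations of the bundle on the affine $C^*$ are unique up to $\hom_{alg}(C^*, G_{ad}) = L_{C}G_{ad}$-worth of choices --- but since the trivialization on $C^*$ is part of the data and is being held fixed, that ambiguity does not enter. Hence by faithfully flat descent over $R' \otimes_R R'$, exactly as written in the previous proof, we obtain mutually inverse maps identifying $Bun^0_{G_{ad},C^*}C(R)$ with the double coset
\[
\frac{G_{ad}(R((z))) \times G_{ad}(R((z)))}{G_{ad}(R)\ltimes(\hat N(R) \times \hat N(R))},
\]
where the two copies of $G_{ad}((z))$ correspond to the two branches at the node and the identification with a single formal variable $z$ uses the chosen local coordinates.

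Next I would match this double coset with $\bO_0 \subset \Xap$. By Proposition \ref{p:Gorb} the stabilizer $H_0$ of $e_{\{0\}}$ is $Z(G) \times Z(G)\, G \ltimes (N \times N^-)$ with $N = \ker(G[z] \to G)$, so $\bO_0 = \frac{G_{ad}[z^\pm] \times G_{ad}[z^\pm]}{G_{ad}\ltimes(N \times N^-)}$; applying the identifications $G_{ad}((z))/G_{ad}[[z]] \cong G_{ad}[z^\pm]/G_{ad}[z] \cong G_{ad}[z^\pm]/G_{ad}[z^{-1}]$ from \cite[pg.231]{Kumar} (as in the previous proof) shows the stacky quotient above agrees with $\bO_0$. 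This is where ``a simple variant of the above proof'' does its work: one must check that replacing the split local model $D^*$ by the global $C^*$ does not change the answer, and this is exactly the content of the Drinfeld--Simpson triviality on the affine piece --- globally there are no extra automorphisms to quotient by because the bundle on $C^*$ is trivial and its trivialization is fixed.

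The main obstacle I anticipate is being careful about \emph{which} trivializations are being fixed versus quotiented, and correspondingly identifying the correct subgroup in the denominator. In the $D$-local statement both punctured discs are symmetric and the subgroup is visibly $G_{ad} \ltimes (\hat N \times \hat N)$; in the global statement one branch of the double coset is ``the $C^*$ side'' and one must verify that sheafifying the presheaf $R \mapsto G_{ad}(R((z)))^{\times 2}/(G_{ad}(R)\ltimes \hat N(R)^{\times 2})$ indeed produces the stack $Bun^0_{G_{ad},C^*}C$ and not something with extra rigidification or with a larger/smaller gauge group. Once the descent bookkeeping is pinned down --- using $R'' = R' \otimes_R R'$ and the cocycle condition exactly as in the previous proof --- the rest is formal, and the identification with $\bO_0$ is immediate from Proposition \ref{p:Gorb} and the cited isomorphisms in \cite{Kumar}.
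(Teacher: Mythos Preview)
Your proposal is correct and follows essentially the same route the paper intends: reduce to the local computation at the node via formal gluing (Beauville--Laszlo), identify the ambiguity in the local trivialization with $G_{ad}(D_R)\cong G_{ad}(R)\ltimes(\hat N(R)\times\hat N(R))$, and then match the resulting double coset with $\bO_0$ using Proposition~\ref{p:Gorb} and the Kumar identifications, exactly as in the preceding proof.

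One small comment: your invocation of Drinfeld--Simpson is superfluous and, as stated in the paper, does not literally apply here since $C$ is nodal rather than smooth. You correctly observe a sentence later that it is not needed because the trivialization on $C^*$ is part of the data; I would simply drop the Drinfeld--Simpson reference entirely. The faithfully flat base change in the argument is used, as in the previous proof, only to trivialize the bundle over the formal neighborhood $D_R$ of the node, not over $C^*$.
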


We can extend the above results to $\bO_j$ for $j \ne 0$ by introducing $\cG$-torsors.  

\subsection{$\mc{G}$-torsors}

In general, given a curve $C \to B$ and a sheaf of group $\cG$ on $C$ we define a {\it $\cG$-torsor} to be a sheaf of sets $\mathcal{F}$ on $C \to B$ together with a right action of $\cG$ such that (1) there is a finite flat cover $\{C_i \to C\}$ such that $\mathcal{F}(C_i) \ne \emptyset$ and (2) the action map $\cG \x \mathcal{F} \to \mathcal{F} \x \mathcal{F}$ is an isomorphism.
\nomenclature{$\cG$}{sheaf of groups on a family of curves}  

Given a principal bundle $F$ on $C$ we can consider the sheaf of groups $\cG^{std}$ defined by $U \mapsto \hom_{alg}(U,G)$ and the sheaf of sets $U \mapsto Sect(U,F|_U)$. In this way we associate to each principal $G$ bundle a $\cG^{std}$-torsor.  In fact there is a perfect dictionary between $\cG^{std}$-torsor and $G$-bundles.
\nomenclature{$\cG^{std}$}{Sheaf of groups associated to the constant group scheme.}

Let $P\subset G$ be a parabolic subgroup. Let $L^+_{P}G = \{\ga \in G[[z]] \ | \ga(0) \in P\}$. Construct a sheaf of groups $\mc{G}^P$ on $\ec \C[[z]] = \{(z), (0)\}$ by $\mc{G}^P(\{(z), (0)\}) = L^+_{P}G$ and $\mc{G}^P(\{0\}) = G((z))$. Given a smooth curve $C$ and a point $p$ we notice that $\cG^{std}|_{C-p}$ and $\mc{G}^P$ agree over $\ec \C((z))$ and thus define a sheaf of group which we also denote $\mc{G}^P$.  Clearly we can iterate over $(x_i) = x_1,\dotsc, x_m \in C$ with parabolics $(P_i) = P_1, \dotsc, P_m$.  Call the resulting sheaf of groups $\cG^{(x_i),(P_i)}$.  Then $\cG^{(x_i),(P_i)}$-torsors are exactly quasi parabolic bundles: $G$-bundles on $C$ with reduction of structure group to $P_i$ at $x_i$.
\nomenclature{$L^+_{P}G$}{Parahoric subgroup of $LG$ associated to a parabolic of $G$}

In the examples mentioned thus far all the $\cG$ have the property that $\cG(\hat \cO_x) \subset \cG^{std}(\hat \cO_x) = G[[z]]$ where $\hat \cO_x$ is the completion of the local ring with respect to the maximal ideal. This inclusion is the crucial property that allows a dictionary between torsors and principal bundles.  This inclusion fails in general and consequently torsors are more general objects that bundles.
\nomenclature{$\cG(\hat \cO_x)$}{completed stalk of a sheaf of groups}

We can construct non bundle torsors by considering parahoric subgroups; these generalize $L^+_{P}G$. A {\it parahoric subgroup} of $\Gsd$ is any subgroup $\mc{P} \subset \Gsd$ that is conjugate in $\Gsd$ to one of the groups
\[
\mc{P}^\ltimes_\eta = \{\ga \in \Gsd| \lim_{s \to 0} \eta(s)\ga\eta(s)^{-1} \mbox{ exists } \}
\]
where $\eta$ is a co-character such that the composition $\Cs \xrightarrow{\eta} \Tsd = \Cs \x T \xrightarrow{p_1} \Cs$ is given by a nonzero integer.  We use the notation $\mc{P}_\eta$ to denote the quotient $\mc{P}^\ltimes_\eta/\Cs = \mc{P}^\ltimes_\eta \cap LG$ and define {\it parahoric subgroups of $LG$} to be any subgroup conjugate to a $\mc{P}_\eta$.  The groups $\mc{P}^\ltimes_\eta$ come with a natural Levi decomposition: $L_\eta = \{\ga \in \mc{P}^\ltimes_\eta | \lim_{s \to 0} \eta(s)\ga\eta(s)^{-1} = \ga \}$ and $\hat U_\eta = \{\ga \in \mc{P}^\ltimes_\eta | \lim_{s \to 0} \eta(s)\ga\eta(s)^{-1} = id \}$.

Given a set $(\mc{P}_i)$ of parahoric subgroups we can analogously construct a sheaf of groups $\cG^{(x_i),(\mc{P}_i)}$ and consider its torsors.  Such torsors, also known as quasi parahoric bundles, seemed to be first discussed by Teleman, see \cite[sect. 9]{T2}.  They have recently received more attention by Heinloth \cite{Hein} and by Balaji and Seshadri \cite{Balaji}.

\subsection{$\mc{G}$-torsors on nodal curves}\label{ss:Gnode}

Associate to $j \in \{0,\dotsc, r\}$ the vertex $\eta_j \in Al_0$ satisfying $\a_j(\eta_j) >0$ and all other $\a_i(\eta_j) =0$. Set $\mc{P}_j = \mc{P}_{\eta_j}$ with Levi decomposition $L_jU_j := L_\eta U_\eta$; we call these the {\it standard maximal parahoric subgroups}.   

Now consider again $D = \ec \C[[x,y]]/xy = \{(x,y), (y), (x)\}$ and $D^* = D - \{(x,y)\}$.  Define a sheaf of groups $\cG^j$ by $\cG^j(D) = \Delta(L_j) \ltimes (\hat U_j \x \hat U_j) $ and $\cG^j|_{D^*} = \cG^{std}$; the restriction map $\cG^j(D)\to \cG^j(D^*)$ is given by $\Delta(L_j) \ltimes (U_j \x U_j) \subset \mc{P}_j \x \mc{P}_j \subset G((z)) \x G((z)) \cong G((x)) \x G((y))$. Similarly as in the smooth curve case we can extend $\cG^j$ to a sheaf groups on a nodal curve $C$ with a unique node $x$ with the property that $\cG^j|_{C -x} = \cG^{std}$.  We can repeat this construction with $G_{ad}$ in place of $G$ and take connected components of the identity throughout; call the resulting sheaf of groups $\cG^j_{ad}$.

Recall from proposition \ref{p:Gorb} that $\Delta(L_j) \subset H_j$; here $L_j$ is a Levi factor of a {\it maximal} parahoric subgroup of $LG$ and consequently $L_j$ is a semi simple group.  Let $Z_j = Z(L_j)/Z(G)$; for $SL_n$ these groups are trivial but not in general. An example for $G = SO_5$ is given at the end of this section.

For $Z_j$ as above let $C^j$ be a curve as in before proposition \ref{p:O0} with the addendum that the nodal point $x$ is replaced by a stacky point $[pt/Z_j]$. Let $C$ be the corse moduli space of $C^j$; i.e. where $x$ is replaced an ordinary non stacky point. Let $\tilde C$ be the  normalization.  Define $\tilde C^j$ to be $\tilde C$ with stacky points $[pt/Z_j]$ introduced at the pre images $y,z$ of the node.  We fix a map $\tilde C^j \to C^j$ by identifying $y,z$ via the trivial automorphism.

Define a stack $Bun^0_{\mc{G}^j_{ad},C^*}C^j$ on $\C$-algebras by
\[
Bun^0_{\mc{G}^j_{ad},C^*}C^j(R) =\bigg \l  \mc{G}^j_{ad}-\mbox{torsors} \begin{array}{c}
     \mc{F} \\ 
    \downarrow\\ 
     C^j_R \\ 
  \end{array}, \tau\colon \mc{F}|_{C^*_R} \widetilde{\to}  \mc{G}^j_{ad}|_{C^*_R}
\bigg \r
\]

\begin{prop}\label{p:Oj}
Let $Z_j$ be the finite group described above. Let $C^j$ be a nodal curve with a unique stacky node $x$ such that $C-x$ is affine and $x \cong [pt/Z^j]$. The stack $Bun^0_{\mc{G}^j_{ad},C^*}C^j$ is represented by the orbit $\bO_j \subset \Xap$.
\end{prop}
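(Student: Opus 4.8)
The plan is to run the parahoric refinement of the proofs of Proposition~\ref{p:O0} and the proposition preceding it, in three steps.

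\emph{Step 1 (sections near the node).} First I would unwind the definition of $\cG^j_{ad}$ to describe its group of sections over the (stacky) formal neighbourhood of the node. A section over $D_R = \ec R[[x,y]]/xy$ is a compatible pair of sections over the two branches; using the Levi decomposition $\mathcal{P}_j = L_j \ltimes \hat U_j$ of the standard maximal parahoric and passing to $G_{ad} = G/Z(G)$, one gets $\cG^j_{ad}(D_R) = \Delta(L_j/Z(G))(R) \ltimes (\hat U_j(R) \times \hat U_j(R))$. Restricting to $D^*_R$ embeds this inside $\cG^j_{ad}(D^*_R) = G_{ad}(R((x))) \times G_{ad}(R((y))) \cong G_{ad}(R((z)))^{\times 2}$, and the stacky point $[pt/Z_j]$ enlarges the automorphism group of the trivial torsor near the node by exactly $Z_j = Z(L_j)/Z(G)$, so that the full group of gluing automorphisms is $(Z(L_j)/Z(G))^{\times 2}\cdot\Delta(L_j/Z(G))(R)\ltimes(\hat U_j(R)\times\hat U_j(R))$. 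The one point requiring care here is that in the stabilizer $H_j = T(\{j\})\cdot S(\{j\}) = Z(L_j)^{\times 2}\cdot\Delta(L_j)\ltimes(U_j\times U_j^-)$ of Proposition~\ref{p:Gorb} the two unipotent factors are \emph{opposite} parahoric radicals, whereas both branches of the node look ``positive''; this is the same discrepancy as $\hat N\times\hat N$ versus $N\times N^-$ in Proposition~\ref{p:O0}, and is resolved the same way, via $G_{ad}[z^\pm]/G_{ad}[z^{-1}]\cong G_{ad}[z^\pm]/G_{ad}[z]$.

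\emph{Step 2 (faithfully flat descent).} As in Proposition~\ref{p:O0}, since $C-x$ is affine a $\cG^j_{ad}$-torsor on $C^j_R$ with a trivialization over $C^*_R$ restricts to $\cG^{std}_{ad} = G_{ad}$ away from the node (already trivialized there) and, by the local triviality underlying the Drinfeld--Simpson theorem applied to the finite-dimensional reductive group $L_j/Z(G)$ together with the pro-unipotence of $\hat U_j$, becomes trivial over the formal node after a faithfully flat base change. The only remaining datum is then the gluing class over the punctured formal node, taken modulo right multiplication by the node-side group of Step~1 --- there is no left quotient, because the trivialization over the affine $C^*$ is part of the data. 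Hence $Bun^0_{\cG^j_{ad},C^*}C^j(R)$ is the double-coset groupoid $\dfrac{G_{ad}(R((x)))\times G_{ad}(R((y)))}{(Z(L_j)/Z(G))^{\times 2}\cdot\Delta(L_j/Z(G))(R)\ltimes(\hat U_j(R)\times\hat U_j(R))}$.

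\emph{Step 3 (comparison with $\bO_j$).} By Proposition~\ref{p:Gorb} for $\Gap$, $\bO_j = \zG^{\times 2}/H_j$ (sheafified) with $H_j = Z(L_j)^{\times 2}\cdot\Delta(L_j)\ltimes(U_j\times U_j^-)$, so $\bO_j(R)$ is the same double coset as in Step~2 except that $G_{ad}(R[z^\pm])$ replaces $G_{ad}(R((x)))$ on the numerator and the polynomial opposite parahoric radicals $U_j, U_j^-$ replace the two copies of the pro-unipotent $\hat U_j$. Both discrepancies are bridged exactly as in Proposition~\ref{p:O0}: the thick and thin models of the partial affine flag variety $\Gsd/\mathcal{P}_j$ agree, i.e.\ $G_{ad}((z))/\mathcal{P}_j \cong \Gsd[z^\pm]/(\Gsd[z^\pm]\cap\mathcal{P}_j)$ --- the parahoric analogue of the isomorphism for the affine Grassmannian in \cite[pg.~231]{Kumar}, valid because $G((z)) = \Gsd[z^\pm]\cdot\mathcal{P}_j$ --- together with the corresponding statement for $\mathcal{P}_j^-$, which produce a bijection of the two double-coset presentations equivariant for the node-side group. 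This yields an isomorphism of stacks $Bun^0_{\cG^j_{ad},C^*}C^j \cong \bO_j$.

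The step I expect to be the main obstacle is the bookkeeping of the stacky node running through Steps~1--3: one must verify that the orbifold point $[pt/Z_j]$ exactly accounts for the passage between the Levi quotient $L_j/Z(G)$ that sits naturally inside $LG_{ad}$ (which has center $Z_j$) and the genuine adjoint Levi $L_{j,ad} = L_j/Z(L_j)$ appearing in $\bO_j$ as the fibre of the $L_{j,ad}$-bundle $\bO_j \to \Gsd/\mathcal{P}_j\times\Gsd/\mathcal{P}_j^-$ of Proposition~\ref{p:Gorb} --- and that the resulting identification is an equivalence of stacks, not merely a bijection on coarse spaces. A secondary technical point is justifying the formal-local triviality of $\cG^j_{ad}$-torsors needed for the descent in Step~2, since $\mathcal{P}_j$ is not of the form $L^+H$ for an algebraic group $H$.
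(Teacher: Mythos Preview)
Your outline matches the paper's proof closely: both compute the automorphism group at the stacky formal node (including the $Z_j\times Z_j$ enlargement coming from lifting automorphisms of $[pt/Z_j]$), invoke uniformization to reduce to a double-coset presentation as in Proposition~\ref{p:O0}, and then compare with $\bO_j$ via the thin/thick identification of partial affine flag varieties.

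There is one step you gloss over that the paper treats explicitly. In Step~3 you write that ``$\bO_j(R)$ is the same double coset as in Step~2 except that $G_{ad}(R[z^\pm])$ replaces $G_{ad}(R((x)))$ on the numerator,'' but by Proposition~\ref{p:Gorb} the numerator is $(\Gsdp)^{\times 2}=(\Cs\ltimes\LpG)^{\times 2}$, not $(\LpG)^{\times 2}$: the rotation $\Cs$ is still present and your Step~2 presentation has no such factor. The paper eliminates it by an explicit co-character argument: for $j\ne 0$ one chooses $\zeta\in V_T$ with $\zeta\in\bigcap_{i\ne j,0}\ker\a_i$ and $\a_j\circ\zeta$ nontrivial; writing $\theta=\sum n_i\a_i$ and evaluating $\a_0$ on $(u,\zeta(t))$ shows that a twist $\zeta'(u)=(u,\zeta(u)^{-1/m})$ lands in $Z(L_j)=\bigcap_{i\ne j}\ker\a_i$, so $Z(L_j)^{\times 2}\subset H_j$ already absorbs $\Cs\times\Cs$ and every coset has a representative in $\LpG\times\LpG$. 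This is the parahoric analogue of what was automatic for $j=0$ (where $Z(L_0)=\Cs\times Z(G)$ visibly contains the rotation), and without it your comparison in Step~3 does not literally go through. Once this is in place, the rest of your argument and the paper's coincide; the concerns you flag at the end are exactly the ones the paper addresses (the stacky-node bookkeeping directly, and the local triviality by citing \cite{Hein}).
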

\begin{proof}
If $x$ had no stackyness then we can use the local model $\ec \hat \cO_x = \ec \C[[y,z]]/yz$ and isomorphisms of torsors over $\ec \hat \cO_x$ is by definition an element of $\mc{G}^j_{ad}(\hat \cO_x) = \Delta(L_j/Z(G)) \ltimes (\hat U_j \x \hat U_j)$.  

When working on the stacky curve we can lift the automorphisms of $x = [pt/Z_j]$ and so isomorphisms of torsors over $\ec \hat \cO_x$ is identified with $Z_j \x Z_j \cdot \Delta(L_j/Z(G)) \ltimes (\hat U_j \x \hat U_j)$. Further by \cite{Hein}, any $\cG$-torsor can be trivialized on $C-x$ so we are formally in the same situation as proposition \ref{p:O0} and we can use the same argument to show
\begin{equation}\label{rhs}
Bun^0_{\mc{G}^j_{ad},C^*}C^j \cong \frac{L G_{ad} \x L G_{ad}}{Z_j \x Z_j \cdot \Delta(L_j/Z(G)) \ltimes (\hat U_j \x \hat U_j)} = \frac{LG \x LG}{Z(L_j) \x Z(L_j)\cdot \Delta(L_j) \ltimes (\hat U_j \x \hat U_j)}
\end{equation}
So it suffices to show the right hand side is isomorphic to $\bO_j$. From \ref{p:Gorb} we have
\begin{align*}
\bO_j = \frac{\Gsdp \x \Gsdp}{Z(L_j) \x Z(L_j)\Delta(L_j) \ltimes (U_j \x U_j^-) } \ \ \ \ \ \ \ Z(L_j) = \bigcap_{i \ne j}\ker \a_i
\end{align*}
Choose a co-character $\zeta$ such that $\zeta(t) \in \cap_{i \ne j}\ker \a_i \cap T = \cap_{i \ne j,0} \ker \a_i$ and $\a_j(\zeta(t)) = t^{\l \zeta, \a \r} \ne 1$; this is always possible if $j \ne 0$. Let $\theta$ be the longest root of $G$ and write $\theta = \sum_{i=1}^r n_i \a_i$ with $n_i>0$.  Then
\[
(u,\zeta(t)) \xrightarrow{\a_0} u \prod_{i \ge 1} t^{n_i \l\zeta, \a_i\r} = u t^{n_j \l \zeta, \a_j\r} = u t^m
\]
with $m \ne 0$ and thus the co-character $u \xrightarrow{\zeta'} (u, \zeta(u)^{-1/m})$
satisfies $\zeta'(u) \x \zeta'(u) \subset T(J)$ and for dimensional reasons we have equality.  This means the class of $(u,\ga), (u',\ga') \in \Gsdp \x \Gsdp$ in $\bO_j$ has a unique representative of the form $(1,h),(1,h')$ where the $\ga$'s and $h$'s differ by multiplication by an element of $Z(L_j)$.  Consequently, 
\[
\bO_j = \frac{\LpG \x \LpG}{Z(L_j) \x Z(L_j) \cdot \Delta(L_j) \ltimes (U_j \x U_j^-) }
\]
and as in proposition \ref{p:O0} we establish that $\bO_j$ is isomorphic to the right hand side of equation \eqref{rhs}
\end{proof}

Let $\tilde C^j$ be the normalization of $C^j$ as described before proposition \ref{p:Oj} and let $y,z$ be the preimages of $x$.  And let $\mc{P}_j$ be the parahoric subgroup of $LG$ described at the start of subsection \ref{ss:Gnode}. Using the construction outlined after proposition \ref{p:O0} we associate to the data $(y,z),(\mc{P}_j,\mc{P}_j)$ a sheaf of groups $\mc{G}^{(y,z),(\mc{P}_j,\mc{P}_j)}$ on $\tilde C^j$ and torsors for $\mc{G}^{(y,z),(\mc{P}_j,\mc{P}_j)}$ are quasi parahoric bundles.
\begin{cor}
$Bun^0_{\mc{G}^j_{ad},C^*}C^j$ can be identified with a quasi parahoric bundle $\widetilde{\mc{F}}$ on $\tilde C^j$, with the same trivialization $\tau$ and additionally a reduction of the $\mc{P}_j \x \mc{P}_j$ torsor $\widetilde{\mc{F}}(\hat \cO_y) \x \widetilde{\mc{F}}(\hat \cO_z)$ to $\mc{G}^j(\hat \cO_x) \subset \mc{P}_j \x \mc{P}_j$.

\end{cor}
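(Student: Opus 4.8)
The plan is to run the normalization-and-descent argument of Propositions \ref{p:O0} and \ref{p:Oj} once more, this time recording precisely what the sheaf of groups $\cG^j_{ad}$ pulls back to along the normalization $\pi\colon \tilde C^j \to C^j$ and how the gluing at the node is encoded as a reduction of structure group at the two marked points upstairs.

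First I would compute $\pi^*\cG^j_{ad}$. Since $\pi$ is an isomorphism away from the node and $\cG^j_{ad}$ equals $\cG^{std}_{ad}$ there, $\pi^*\cG^j_{ad}$ agrees with $\cG^{std}_{ad}$ on $\tilde C^j - \{y,z\}$. On the formal neighborhood of the node one has $\cG^j_{ad}(\hat\cO_x) = \Delta(L_j/Z(G)) \ltimes (\hat U_j \x \hat U_j)$, and under the identification of the two branches of $\ec\C[[x,y]]/xy$ with the two separate formal discs $\ec\hat\cO_y$ and $\ec\hat\cO_z$ on $\tilde C^j$, this group sits inside $\mc{P}_j(\hat\cO_y) \x \mc{P}_j(\hat\cO_z)$ as the diagonal Levi subgroup $\Delta(L_j) \ltimes (\hat U_j \x \hat U_j)$, i.e.\ as $\cG^j(\hat\cO_x)$, modulo the relevant center. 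Hence $\pi^*\cG^j_{ad}$ is exactly the sheaf $\mc{G}^{(y,z),(\mc{P}_j,\mc{P}_j)}$ of subsection \ref{ss:Gnode}, whose torsors are by definition the quasi parahoric bundles; and pulling back a $\cG^j_{ad}$-torsor on $C^j$ together with the trivialization over $C^*$ produces a quasi parahoric bundle $\widetilde{\mc{F}}$ on $\tilde C^j$ carrying the same trivialization $\tau$.

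Next I would recover the node datum. Following the descent proof of Proposition \ref{p:O0} --- trivializing the torsor over $C^j - x$ using \cite{Hein}, using the local model $\ec\hat\cO_x = \ec\C[[x,y]]/xy$ at the stacky point $[pt/Z_j]$, and lifting the $Z_j$-automorphisms exactly as in the proof of Proposition \ref{p:Oj} --- a $\cG^j_{ad}$-torsor on $C^j$ amounts to the quasi parahoric bundle $\widetilde{\mc{F}}$ on $\tilde C^j$ together with an identification of its two formal fibers $\widetilde{\mc{F}}(\hat\cO_y)$ and $\widetilde{\mc{F}}(\hat\cO_z)$ over the preimages of the node. These fibers assemble into a $\mc{P}_j \x \mc{P}_j$-torsor, and such an identification is precisely a reduction of its structure group to $\cG^j(\hat\cO_x) \subset \mc{P}_j \x \mc{P}_j$. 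Functoriality of all these constructions promotes this bijection of objects to an isomorphism of the corresponding stacks, which is the corollary.

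The step I expect to be the main obstacle --- in fact the only one --- is the bookkeeping around the stacky node and the various centers: one must check that the $Z_j = Z(L_j)/Z(G)$-gerbe structure at $x$ is correctly absorbed into the reduction datum and is not double counted against the automorphisms already present in $\widetilde{\mc{F}}$, and that the passage between $G$ and $G_{ad}$, and between $\mc{P}_j$ and its adjoint quotient, is carried out consistently on the two sides of the identification. This is the same reconciliation already performed in the proof of Proposition \ref{p:Oj} (compare equation \eqref{rhs} there), now applied fiberwise at the node, so I do not anticipate any genuinely new difficulty beyond a careful diagram chase.
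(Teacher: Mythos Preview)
Your approach is essentially the same as the paper's: pull the torsor up along the normalization, identify the resulting structure group at $y,z$ as $\mc{P}_j$, and record the gluing at the node as a reduction of the $\mc{P}_j\times\mc{P}_j$-torsor to $\cG^j(\hat\cO_x)$; the inverse is obtained by going through $\bO_j$ and Proposition~\ref{p:Oj}.

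One small imprecision worth flagging: the sentence ``$\pi^*\cG^j_{ad}$ is exactly the sheaf $\mc{G}^{(y,z),(\mc{P}_j,\mc{P}_j)}$'' is not literally correct. The naive pullback would have completed stalk at $y$ (and at $z$) equal to $\cG^j(\hat\cO_x)=\Delta(L_j)\ltimes(\hat U_j\times\hat U_j)$, not $\mc{P}_j$. The paper handles this by \emph{defining} $\widetilde{\mc{G}}^j(\hat\cO_y)$ to be the image of $\cG^j(\hat\cO_x)\subset G((y))\times G((z))$ under projection to the first factor, which is $\mc{P}_j$; the diagonal constraint is then what survives as the reduction datum. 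This is exactly the content of your second paragraph, so the slip is terminological rather than mathematical, but you should state the construction of $\widetilde{\mc{G}}^j$ this way rather than as a sheaf pullback. The paper also makes the reduction explicit by observing that a local section $e\in\mc{F}(\hat\cO_x)$ yields $(e,e)\in\widetilde{\mc{F}}(\hat\cO_y)\times\widetilde{\mc{F}}(\hat\cO_z)$, well defined modulo $\cG^j(\hat\cO_x)$; this is more concrete than your ``identification of formal fibers'' and makes the inverse construction (and the $Z(L_j)\times Z(L_j)$ bookkeeping you anticipate) transparent.
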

\begin{proof}
Given an $(\mc{F}, \tau) \in Bun^0_{\mc{G}^j_{ad},C^*}C^j(R)$ we associate to this data a sheaf of groups $\widetilde{\mc{G}}^j$ on $\tilde C$, a torsor $\widetilde{\mc{F}}$ and a section as in the statement of the proposition.

Fix a local model for $x \in C$ where the {\it point} $x$ corresponds to $(y,z) \subset \ec \C[[y,z]]/y z$ where by abuse of notation we consider $y,z$ as local coordinates near the {\it points} $y,z \in \tilde C$. Then $\widetilde{\mc{G}}^j = \mc{G}^j$ on $\tilde C - y - z$ and we define $\widetilde{\mc{G}}^j(\hat \cO_y)$ to be the image of the composition $\mc{G}^j(\hat \cO_x)\subset G((y)) \x G((z)) \to G((y))$ and similarly for $\widetilde{\mc{G}}^j(\hat \cO_z)$.

To define $\widetilde{\mc{F}}$ note that a trivialization of $\mc{F}$ over $\ec \hat \cO_x$ induces an element in $\mc{P}^j = \widetilde{\mc{G}}^j(\hat \cO_y)$ and comparing it with the trivialization $\tau$ defines `transition function' $\in G((y))$ which we use to define $\widetilde{\mc{F}}$ over $C^*\cup y$; this is possible again by the uniformization of torsors over a smooth curve proved in \cite{Hein}. Extending to $C^* \cup y \cup z$ is handled similarly.   

To define the reduction note that an element $e \in \mc{F}(\hat \cO_x)$ defines a point $(e,e) \in \mc{F}(\hat \cO_y) \x \mc{F}(\hat \cO_z)$ and another choice $e'$ differs from $e$ by an element of $\mc{G}^j(\hat \cO_x)$, so we have a well defined section of $ \widetilde{\mc{F}}(\hat \cO_y) \x \widetilde{\mc{F}}(\hat \cO_z)/\mc{G}^j(\hat \cO_x)$.

We show now given data on $\tilde C$ we can get and element of $\bO_j$ and composing with $\bO_j \widetilde{\to}Bun^0_{\mc{G}^j_{ad},C^*}C^j$ gives the inverse to the above construction.  Again extending to $R \to R'$ we assume $\widetilde{\mc{F}}$ has a trivialization over $\ec (\hat \cO_y \x \hat \cO_z)$ comparing with the trivialization $\tau$ we get elements $(\ga_1, \ga_2) \in G((y)) \x G((z))$.  We can extend in two different ways to $R'' = R'\ox_R R'$ giving us two sets of elements $(\ga_1', \ga_2')$ and $(\ga_1'',\ga_2'')$ in $G(R''((y))) \x G(R''((z)))$.  These elements are related by an isomorphism of $\widetilde{\mc{F}}$ over $\ec (\hat \cO_y \x \hat \cO_z)$ that preserves the section; such isomorphism come from multiplication by $\Delta(L_j) \x (\hat U_j \x \hat U_j)$. But there is also an additional factor of $Z(L_j) \x Z(L_j)$ that comes from lifting the automorphisms of the points $y,z$. Arguing as in proposition \ref{p:O0} we get an element of $\bO_j(R)$ as desired.
\end{proof}

\begin{rmk}\label{rmk:alg}
The question of the modular interpretation of the remaining, higher codimensional orbits requires more care.  There are some results in this direction when one restricts to the divisor $\ol{\bO_0}$.  In this case higher codimensional orbits have been interpreted as torsion free sheaves for $GL_n$ and $Sp_n$.  The other approach is to consider bundles on modifications of nodal curves as in \cite{Tolland,K1,K2}.  In the analytic setting one can nevertheless fit together all the orbits of $\Xa$ into a complex analytic space that serves as a completion of bundles over nodal curves in families. The appropriate algebraic analogue is work in progress.
\end{rmk}

\begin{rmk}
The space $Bun^0_{\mc{G}^j_{ad},C^*}C^j$ is a `rigidified' moduli space and when one forgets the rigidifications one gets a space much closer to $Bun_G(C)$. Indeed, the $0$ and $ad$ decorations can be removed by working with $\mc{X}^{aff}_{poly}$ instead $\Xap$.  There is an action of $G(C^*)$ on $Bun^0_{\mc{G}^j_{ad},C^*}C^j$ which changes the trivialization over $C^*$. Quotienting by this action gives a moduli space which we can denote $Bun_{\mc{G}^j}C^j$ and in turn we can pass from $\mc{G}^j$ to $G$ by working with certain equivariant $G$-bundles.  The details of this construction will be carried out in a follow up paper.
\end{rmk}

Even without a modular description of the other orbits, the previous proposition already tells us something interesting.  Given that bundles on a nodal curve are equivalent to bundles on the normalization together with a `transition function'$\in G$ at the node, it is a natural first guess to try to complete $Bun_G(C)$ simply by compactifying $G$.  However the previous proposition shows that this is not sufficient; namely, in families a principal bundle may develop parahoric structure at the node.  Figure \ref{pic:bunG} is an illustrates this; compactifying $G$ only tells you about the divisor $D_0$.
\begin{figure*}[htm]
\centering
\includegraphics[scale=0.45]{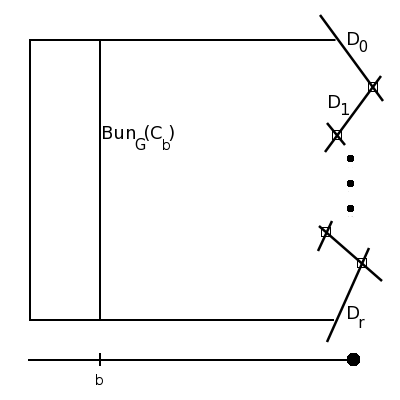}
\caption{Schematic of $\ol{Bun_G(C/B)}$ over the base $B$.  The general fiber is just $Bun_G(C_b)$ for the smooth curve $C_b$.  Over the special fiber of $B$ we get a divisor with normal crossings and $r+1$ components.  The divisor $D_i$ has a dense open subset, the complement of the squares, which corresponds to $Orb(\{i\})$.}
\label{pic:bunG}
\end{figure*}

We now give a couple of example of parahoric subgroups.  We treat first the case $G = SL_2$.  Set $\cO = \C[[z]]$. The standard parahoric subgroups of $LG$ are
\begin{align*}
\mc{P}_0 &= G(\cO) = G[[z]]\\
\mc{P}_1 &:= \left\{\ga(z) = \abcd{\cO}{z^{-1} \cO}{z \cO}{\cO} | \det \ga(z) = 1\right\}
\end{align*}
The first corresponds to the co-character $u \mapsto (u,diag(1,1)) \in \Tsd$ and the second corresponds to $(u,diag(\sqrt{u},\frac{1}{\sqrt{u}}) )$.

In the case of $SL_r$, all the maximal parahoric bundles are conjugate to $G[[z]]$ by outer automorphisms which can either be interpreted as `fractional' loops in $G$ or as honest loops in $GL_r$ .  For example
\[
g_1\mc{P}_1g_1^{-1} = g_2\mc{P}_1g_2^{-1} = \mc{P}_0 \ \ \ \mbox{with } g_1 = \abcd{1}{0}{0}{z^{-1}}, \ \ \ g_2 = \abcd{z^{1/2}}{0}{0}{z^{-1/2}}
\]

As a result all $\cG$-torsors with $\mc{P}_i$ structure always have interpretations in terms of vector bundles.  In the case at hand if $E$ is the trivial rank $2$ vector bundle over $D = \ec \C[[z]]$ and $s_1,s_2$ are two non vanishing sections then any other such sections can be obtained as $\ga.s1, \ga.s2$ with $\ga \in \mc{P}_0$.  On the other hand if we require $s_1$ to have a pole at the closed point and $s_2$ to be non vanishing at the closed point then any other such sections can be obtained as $\ga.s1,\ga.s2$ with $\ga \in \mc{P}_1$.  The latter case corresponds to the vector bundle $\cO_D \oplus \cO_D(1)$ over $D$.  In other words, if we have a smooth curve $C$ and a point $p$ then $G(C-p)\backslash LG/\mc{P}_0$ is the moduli space of rank 2 vector bundles with trivial determinant while $G(C-p)\backslash LG/\mc{P}_1$ corresponds to those vector bundles with determinant $\cO_C(p)$.

We finish with one final example of a parahoric subgroup of $SO_5$ which shows, unlike the $SL_r$ case, the parahoric structures that appear cannot always be interpreted as twists of the standard structure $G[[z]]$.  It will be enough to work with the Lie algebra which we present as
\[
\mathfrak{so}_5 =  \left\{\left[ \begin{array}{ccccc}
    a_1 & a_2 & 0 & b & -h1 \\ 
    a_3 & a_4 & -b & 0 & -h2 \\ 
    0 & c & -a_1 & -a_3 & -g1 \\ 
    -c & 0 & -a_2 & -a_4 & -g_2 \\ 
    g_1 & g_2 & h_1 & h_2 & 0 \\ 
  \end{array}\right] | a_i,b,c,g_i,h_i \in \C \right\}
\]
the Lie algebra of a maximal torus is given by diagonal matrices and more generally we can pick out the root spaces
\begin{align*}
(X_1,Y_1) &\leftrightarrow (a_2,a_3)\\
(X_2,Y_2) &\leftrightarrow (\sqrt 2 h_2, - \sqrt 2 g_2)\\
(X_3,Y_3) &\leftrightarrow (\sqrt 2 h_1, - \sqrt 2 g_1)\\
(X_4,Y_4) &\leftrightarrow (b,c)
\end{align*}
For example
\begin{align*}
X_1 = \left[  \begin{array}{ccccc}
      & 1  &   &   &   \\ 
      &   &   &   &   \\ 
      &   &   &   &   \\ 
      &   & -1  &   &   \\ 
      &   &   &   &   \\ 
  \end{array}\right] \ \ \ \ Y_1  = \left[  \begin{array}{ccccc}
      &   &   &   &   \\ 
   1 &   &   &   &   \\ 
      &   &   &  -1 &   \\ 
      &   &   &   &   \\ 
      &   &   &   &   \\ 
  \end{array}\right]
\end{align*}
The maximal parahoric subgroups are given by the vertices of the Weyl alcove $\subset \mft$.  Identify $\mft = \C^2$ using the basis $H_1 = [X_1,Y_1]$ and $H_2 = [X_2,Y_2]$.  Then the Weyl alcove is a triangle with supporting hyperplanes
\begin{align*}
y = x, \ \ \ \  y = x/2, \ \ \ \ y = 1/2
\end{align*}
The vertices are $v_0 = (0,0), v_1 = (1,1/2)$ and $v_2 = (1,1/2)$ and they give rise to three maximal parahorics $\mc{P}_0, \mc{P}_1,\mc{P}_2$ .  We have $\mc{P}_0 = G[[z]]$ and $\mc{P}_0 \cong \mc{P}_1$ via an outer automorphism of $LSO(5)$.  In fact the outter automorphisms of $LG$ come from automorphisms of the corresponding affine Dynkin diagram.  For $L SO(5)$ the diagram looks like $0 \rightrightarrows 2 \leftleftarrows 1$ and the automorphism in question switches $0,1$.

The parahoric $\mc{P}_2$ has a Levi factor isomorphic to $SL_2 \x SL_2$ the Lie algebra of this levi is given by 
\[
\mathfrak{sl}_2 \x \mathfrak{sl_2} \cong \abcd{H_4}{Y_4 \otimes z}{X_4 \otimes z^{-1}}{-H_4} \x \abcd{H_1}{X_1}{Y_1}{-H_1} 
\] 
In this case the groups $Z_2$ referred to in proposition \ref{p:Oj} is $\Z/2$. The non isomorphic Levi subgroups occur because of the different length roots in $\mathfrak{so}_5$.

\begin{thenomenclature} 

 \nomgroup{A}

  \item [{$(\Sigma, \beta \colon L \to N)$}]\begingroup Stacky fan.\nomeqref {5}
		\nompageref{8}
  \item [{$[Y(\Sigma)/Z(\beta)]$}]\begingroup Toric stack associated to a stacky fan.\nomeqref {5}
		\nompageref{8}
  \item [{$\a_0$}]\begingroup Affine root of $\Ga$\nomeqref {7}
		\nompageref{11}
  \item [{$\a_1, \dotsc, \a_r$}]\begingroup Roots of a semi simple group\nomeqref {3}
		\nompageref{5}
  \item [{$\beta^*$}]\begingroup Dual of the map $\beta$ in a stacky fan\nomeqref {5}
		\nompageref{8}
  \item [{$\bO_j$}]\begingroup Orbit in $\Xa$ associated to the subset $\{j\}$\nomeqref {12}
		\nompageref{25}
  \item [{$\cG$}]\begingroup sheaf of groups on a family of curves\nomeqref {12}
		\nompageref{26}
  \item [{$\cG(\hat \cO_x)$}]\begingroup completed stalk of a sheaf of groups\nomeqref {12}
		\nompageref{26}
  \item [{$\cG^{std}$}]\begingroup Sheaf of groups associated to the constant group scheme.\nomeqref {12}
		\nompageref{26}
  \item [{$\Ga$}]\begingroup Affine Kac-Moody group.\nomeqref {7}
		\nompageref{11}
  \item [{$\Ga_{ad} = \Gsd/Z(G)$}]\begingroup  Adjoint group of $\Ga$\nomeqref {10}
		\nompageref{14}
  \item [{$\Gap$}]\begingroup Polynomial affine Kac-Moody group associated to $G$\nomeqref {8}
		\nompageref{12}
  \item [{$\Gsd$}]\begingroup Semi direct loop group\nomeqref {7}
		\nompageref{11}
  \item [{$\l \mu ,\eta \r$}]\begingroup Paring between characters and co-characters\nomeqref {3}
		\nompageref{5}
  \item [{$\LpG$}]\begingroup Polynomial loop group\nomeqref {8}
		\nompageref{12}
  \item [{$\mft^\ltimes_\R$}]\begingroup $\R$ Lie algebra of $\Tsd$\nomeqref {10}
		\nompageref{14}
  \item [{$\ol{\LTsd}$}]\begingroup Embedding of the $\LTsd$\nomeqref {11}
		\nompageref{20}
  \item [{$\ol{\Tad}$}]\begingroup  Closure of $\Tad$ in $X$\nomeqref {4}
		\nompageref{5}
  \item [{$\ol{\Tsad}$}]\begingroup  Closure of $\Tsad$ in $\Xa$\nomeqref {10}
		\nompageref{14}
  \item [{$\ol{\Tsd_Q}$}]\begingroup toric variety inside an embedding for the group $\tilde Q$\nomeqref {12}
		\nompageref{22}
  \item [{$\oTad$}]\begingroup  Affine toric variety, open cell of $\ol{\Tad}$\nomeqref {4}
		\nompageref{5}
  \item [{$\oTsad$}]\begingroup  Affine toric variety, open cell of $\ol{\Tsad}$\nomeqref {10}
		\nompageref{14}
  \item [{$\Ta$}]\begingroup maximal torus of $\Ga$\nomeqref {7}
		\nompageref{11}
  \item [{$\Tad$}]\begingroup  Maximal torus for $G_{ad}$\nomeqref {4}
		\nompageref{5}
  \item [{$\Taf$}]\begingroup central extension of $\Cs \ltimes LT$\nomeqref {11}
		\nompageref{20}
  \item [{$\tilde Q$}]\begingroup The group $\tvt$ with group structure determined by $Q$\nomeqref {11}
		\nompageref{21}
  \item [{$\tilde Q_{ad}$}]\begingroup adjoint group of $\tilde Q$\nomeqref {11}
		\nompageref{21}
  \item [{$\tlt0$}]\begingroup subgroup of $\Taf$\nomeqref {11}
		\nompageref{20}
  \item [{$\Tsad$}]\begingroup  Maximal torus for $\Ga_{ad}$\nomeqref {10}
		\nompageref{14}
  \item [{$\Tsd$}]\begingroup Maximal torus of $\Gsd$\nomeqref {7}
		\nompageref{11}
  \item [{$\tvt$}]\begingroup subgroup of $\Taf$\nomeqref {11}
		\nompageref{20}
  \item [{$\Wa$}]\begingroup Affine Weyl group\nomeqref {7}
		\nompageref{12}
  \item [{$\Xa$}]\begingroup Wonderful embedding of $\Gsd/Z(G)$\nomeqref {10}
		\nompageref{14}
  \item [{$\Xa_0$}]\begingroup  Open cell of $\Xa$\nomeqref {10}
		\nompageref{14}
  \item [{$\Xap$}]\begingroup Polynomial embedding\nomeqref {10}
		\nompageref{14}
  \item [{$\Xas$}]\begingroup Smooth embedding\nomeqref {10}
		\nompageref{14}
  \item [{$Al_0$}]\begingroup Positive Weyl alcove\nomeqref {8}
		\nompageref{12}
  \item [{$C_\Delta$}]\begingroup Cone depending on the fundamental weights of a semisimple group\nomeqref {6}
		\nompageref{9}
  \item [{$G_{ad}$}]\begingroup  Adjoint group of $G$\nomeqref {4}
		\nompageref{5}
  \item [{$H_j$}]\begingroup Subgroup associated to the orbit $\bO_j$\nomeqref {12}
		\nompageref{25}
  \item [{$L^+_{P}G$}]\begingroup Parahoric subgroup of $LG$ associated to a parabolic of $G$\nomeqref {12}
		\nompageref{26}
  \item [{$L^+G$,$L^-G$,$\cB$, $\cB^-$, $\cU$, $\cU^-$}]\begingroup Subgroups of the loop group\nomeqref {7}
		\nompageref{11}
  \item [{$P(p)$}]\begingroup convex hull of a finite number of points in a vector space.\nomeqref {12}
		\nompageref{22}
  \item [{$T$, $B$,$B^-$,$U$,$U^-$}]\begingroup Subgroups of a semisimple group\nomeqref {3}
		\nompageref{5}
  \item [{$u_1, \dotsc, u_r$}]\begingroup Ray generators for the cone given by the Weyl chamber.\nomeqref {5}
		\nompageref{9}
  \item [{$X$}]\begingroup Wonderful compactification of $G_{ad}$\nomeqref {4}
		\nompageref{5}
  \item [{$X_0$}]\begingroup  Open cell of $X$\nomeqref {4}
		\nompageref{5}
  \item [{$Y(C_\Delta)$}]\begingroup Quasi-projective $G \x (\Cs)^r$-embedding\nomeqref {6}
		\nompageref{9}
  \item [{$Z(\beta)$}]\begingroup Subgroup of a torus associated to a stacky fan.\nomeqref {5}
		\nompageref{8}
  \item [{$Z(G)$}]\begingroup Center of a group $G$\nomeqref {4}
		\nompageref{5}
  \item [{$Z_Q$}]\begingroup Subgroup of torus assoicated to a quadratic form $Q$.\nomeqref {11}
		\nompageref{21}
  \item [{HW}]\begingroup highest weight\nomeqref {3}\nompageref{5}
  \item [{HWR}]\begingroup Highest weight representation\nomeqref {3}
		\nompageref{5}
  \item [{PER}]\begingroup positive energy representation\nomeqref {8}
		\nompageref{13}

\end{thenomenclature}

\bibliographystyle{plain} 
\bibliography{WELG}

\end{document}